\setlist[enumerate]{format=\normalfont}
\newcommand{\marginparstretch}{0.6}
\let\oldmarginpar\marginpar
\renewcommand\marginpar[1]{\-\oldmarginpar[\framebox{\setstretch{\marginparstretch}\begin{minipage}{\marginparwidth}{\raggedleft\tiny #1}\end{minipage}}]{\framebox{\setstretch{\marginparstretch}\begin{minipage}{\marginparwidth}{\raggedright\tiny #1}\end{minipage}}}}
\tikzset{
        cvertex/.style={circle,draw=black,inner sep=1pt,outer sep=3pt},
        vertex/.style={circle,fill=black,inner sep=1.2pt,outer sep=4pt},
        DBs/.style={circle,draw=black,circle,fill=black,inner sep=0pt, minimum size=3pt},
        DB/.style={circle,draw=black,circle,fill=black,inner sep=0pt, minimum size=4pt},
         DWs/.style={circle,draw=black,circle,fill=white,inner sep=0pt, minimum size=3pt},
         DWds/.style={circle,draw=black,densely dotted,circle,fill=white,inner sep=0pt, minimum size=3pt},
        DW/.style={circle,draw=black,inner sep=0pt, minimum size=4pt},
        tvertex/.style={inner sep=1pt,font=\scriptsize},
        gap/.style={inner sep=0.5pt,fill=white},
        mid/.style={inner sep=0.5pt},
        Ggap/.style={inner sep=0.5pt,fill=green!40!black!20}}
\newtheorem{thm}{Theorem}[section]
\newtheorem{proposition}[thm]{Proposition}
\newtheorem{lemma}[thm]{Lemma}
\newtheorem{definition}[thm]{Definition}
\newtheorem{cor}[thm]{Corollary}
\newtheorem{conj}[thm]{Conjecture}
\theoremstyle{definition} 
\newtheorem{example}[thm]{Example}
\newtheorem{setup}[thm]{Setup}
\newtheorem{remark}[thm]{Remark}
\newtheorem{notation}[thm]{Notation}
\numberwithin{equation}{section}
\def\mod{\mathop{\rm mod}\nolimits}
\def\silt{\mathop{\sf silt}\nolimits}
\def\presilt{\mathop{\text{$2$-}\sf presilt}\nolimits}
\def\tilt{\mathop{\sf tilt}\nolimits}
\def\rig{\mathop{\sf rig}\nolimits}
\def\mrig{\mathop{\sf mrig}\nolimits}
\def\umrig{\mathop{\overline{\sf mrig}}\nolimits}
\def\twosilt{\mathop{\text{$2$-}\sf silt}\nolimits}
\def\twotilt{\mathop{\text{$2$-}\sf tilt}\nolimits}
\def\lsilt{\mathop{\text{$\ell$-}\sf silt}\nolimits}
\def\mut{\mathop{\sf mut}\nolimits}
\def\umut{\mathop{\overline{\sf mut}}\nolimits}
\def\proj{\mathop{\rm proj}\nolimits}
\def\uHom{\mathop{\underline{\rm Hom}}\nolimits}
\def\Hom{\mathop{\rm Hom}\nolimits}
\def\RHom{\mathop{\mathbf{R}\rm Hom}\nolimits}
\def\End{\mathop{\rm End}\nolimits}
\def\uEnd{\mathop{\underline{\rm End}}\nolimits}
\def\Ext{\mathop{\rm Ext}\nolimits}
\def\add{\mathop{\rm add}\nolimits}
\def\Spec{\mathop{\rm Spec}\nolimits}
\def\Db{\mathop{\rm{D}^b}\nolimits}
\def\Kb{\mathop{\rm{K}^b}\nolimits}
\newcommand{\con}{\mathrm{con}}
\def\tilt{\mathop{\sf tilt}\nolimits}
\def\Rf{{\rm\bf R}f}
\newcommand{\tT}{\EuScript{T}}
\newcommand{\rR}{\mathfrak{R}}
\newcommand{\sS}{\mathfrak{S}}
\newcommand{\scrE}{\EuScript{E}}
\newcommand{\cC}{\EuScript{C}}
\newcommand{\cD}{\EuScript{D}}
\newcommand{\cS}{\EuScript{S}}
\newcommand{\ucE}{\underline{\scrE}}
\newcommand{\id}{\mathrm{id}}
\DeclareMathOperator{\cm}{\mathrm{CM}}
\DeclareMathOperator{\ucm}{\underline{\mathrm{CM}}}
\newcommand{\cmr}{\cm R}
\newcommand{\cmrr}{\cm \rR}
\newcommand{\ucmrr}{\ucm \rR}
\newcommand{\ucmss}{\ucm \sS}
\newcommand{\cE}{\scrE}
\def\mapstofill@{%
   \arrowfill@{\mapstochar\relbar}\relbar\rightarrow}
\newcommand*\xmapsto[2][]{%
   \ext@arrow 0395\mapstofill@{#1}{#2}}
\begin{document}
\title[Finiteness of Derived Equivalence Classes]{\textsc{On the Finiteness of the Derived Equivalence Classes of some Stable Endomorphism Rings}}
\author{Jenny August}
\address{Max Planck Institute for Mathematics,
Vivatsgasse 7,
53111 Bonn,
Germany.}
\email{jennyaugust@mpim-bonn.mpg.de}
\begin{abstract}

We prove that the stable endomorphism rings of rigid objects in a suitable Frobenius category have only finitely many basic algebras in their derived equivalence class and that these are precisely the stable endomorphism rings of objects obtained by iterated mutation. The main application is to the Homological Minimal Model Programme. For a 3-fold flopping contraction $f \colon X \to \Spec R$, where $X$ has only Gorenstein terminal singularities, there is an associated finite dimensional algebra $A_{\con}$ known as the contraction algebra. As a corollary of our main result, there are only finitely many basic algebras in the derived equivalence class of $A_\con$ and these are precisely the contraction algebras of maps obtained by a sequence of iterated flops from $f$. This provides evidence towards a key conjecture in the area.
\end{abstract}
\subjclass[2010]{Primary 16E35; Secondary 16E65, 14E30}
\maketitle
\parindent 15pt
\parskip 0pt

\section{Introduction}

This paper focuses on a fundamental problem in homological algebra: given a basic algebra $A$, find all the basic algebras $B$ such that $A$ and $B$ have equivalent derived categories. We will give a complete answer to this question for a class of finite dimensional algebras arising from suitable Frobenius categories.

By a well known result of Rickard \cite{morita}, the above problem is equivalent to first finding all the \emph{tilting complexes} over $A$ and then computing their endomorphism rings. One approach to the first of these problems is to use \textit{mutation}; an iterative procedure which produces new tilting complexes from old. The naive hope is that starting from a given tilting complex, all others can be reached using mutation. However, for a general algebra, tilting complexes do not behave well enough for this to work. There are two key problems:
\begin{enumerate}
\item The mutation procedure does not always produce a tilting complex. 
\item It is often possible to find two tilting complexes which are not connected by any sequence of mutations.
\end{enumerate}
Both problems have motivated results in the literature; the first prompting the introduction of the weaker notion of \textit{silting complexes} \cite{kellervos, siltingmutation} and the second resulting in restricting to a class of algebras known as \textit{tilting-discrete} algebras \cite{tiltingconnectedness, AiharaMiz}. In this paper, we will combine these ideas to provide a class of examples of finite dimensional algebras for which the derived equivalence class can be completely determined.  
\subsection{Algebraic Setting and Results}
From an algebraic perspective, the algebras we consider arise in cluster-tilting theory, where the objects of study are \textit{rigid objects} in some category $\cC$ and the algebras of interest are their endomorphism algebras.
More precisely, let $\cE$ be a Frobenius category such that its stable category $\cC =\ucE$ is a $k$-linear, Hom-finite, Krull-Schmidt, 2-Calabi-Yau triangulated category with shift functor denoted $\Sigma$. This ensures $\ucE$ has the conditions usually imposed for cluster-tilting theory, but we shall add two additional assumptions: 
\begin{enumerate}
\item $\ucE$ has at least one but a finite number of maximal rigid objects, and;
\item $\Sigma^2 \cong \id$. 
\end{enumerate}
Utilizing the strong links between rigid objects and the silting theory of their endomorphism algebras developed in \cite{[AIR]}  leads to the following, which is our main result.
\begin{thm}[Corollary \ref{mainresult}] \label{mainresultintro}
With the conditions above, for any rigid object $M$ of $\ucE$, the basic algebras derived equivalent to $\uEnd_\cE(M)$ are precisely the stable endomorphism algebras of rigid objects obtained from $M$ by iterated mutation. In particular, there are only finitely many such algebras.
\end{thm}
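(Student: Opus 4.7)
The inclusion that $\uEnd_\cE(N)$ is derived equivalent to $\Lambda:=\uEnd_\cE(M)$ whenever $N$ arises from $M$ by iterated mutation is essentially formal, since each rigid mutation translates to a silting mutation at the algebra level, and silting mutation is known to produce a derived equivalence. The real content of the theorem is the reverse inclusion together with the finiteness claim: every basic algebra derived equivalent to $\Lambda$ is of the form $\uEnd_\cE(N)$ for some iterated mutation $N$ of $M$, and only finitely many such algebras arise.

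First I would check that $\Lambda$ is a finite-dimensional symmetric $k$-algebra. Hom-finiteness of $\ucE$ gives finite-dimensionality, while the symmetric property is forced by the hypotheses: the Serre functor of the $2$-Calabi--Yau category $\ucE$ is $\Sigma^2$, and the assumption $\Sigma^2 \cong \id$ yields a functorial isomorphism $\uHom_\cE(X,Y) \cong D\,\uHom_\cE(Y,X)$, from which it follows that $\Lambda \cong D\Lambda$ as $\Lambda$-bimodules. A crucial consequence is that for a symmetric algebra the classes of silting and tilting complexes in $\per\Lambda$ coincide, so by Rickard's theorem the problem reduces to the classification of basic silting complexes over $\Lambda$ up to isomorphism and shift.

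Next, I would set up a mutation-compatible bijection between basic silting complexes of $\Lambda$ and basic rigid objects of $\ucE$ related to $M$, extending the Adachi--Iyama--Reiten correspondence. The cleanest route is to complete $M$ to a basic maximal rigid $T \supseteq M$, invoke the [AIR] correspondence between $2$-term silting complexes over $\End_\cE(T)$ and basic rigid objects of $\ucE$, and then perform Iyama--Yang silting reduction to pass from $\End_\cE(T)$ to $\Lambda$. Under this bijection, silting mutation on the algebra side matches rigid mutation of summands on the categorical side, and the endomorphism ring of the silting complex corresponding to $N$ is exactly $\uEnd_\cE(N)$.

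Finally, the hypothesis of finitely many basic maximal rigid objects bounds the number of basic rigid objects that can appear in the image of this bijection, so $\Lambda$ has only finitely many basic silting complexes and is therefore silting-discrete. Aihara's connectedness theorem for silting-discrete algebras then guarantees that every basic silting complex over $\Lambda$ is obtained from $\Lambda$ itself by iterated irreducible silting mutation; translating back through the bijection identifies these with the rigid objects reachable from $M$ by iterated mutation, yielding the theorem. The main obstacle is executing Step~3 cleanly for a general (not necessarily maximal) rigid object $M$: the original [AIR] framework is phrased for maximal rigid / cluster tilting objects, and verifying that silting reduction intertwines mutation at every stage is where the technical work concentrates.
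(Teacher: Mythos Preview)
Your overall strategy is right in spirit, but there is a genuine gap in the last step, and the paper's execution differs from yours in an important way.

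\medskip
\textbf{The gap.} You assert that the finiteness of rigid objects forces $\Lambda$ to have only finitely many basic silting complexes, and that this yields silting-discreteness. But any nonzero algebra has infinitely many basic silting complexes: the shifts $\Lambda[n]$ are pairwise non-isomorphic. What you can hope to extract from an AIR-type bijection is finiteness of \emph{two-term} silting complexes, i.e.\ finiteness of the single interval $\twosilt_\Lambda \Lambda = \{P : \Lambda \ge P \ge \Lambda[1]\}$. That alone is strictly weaker than silting/tilting-discreteness, which requires \emph{every} interval $\twotilt_P\Lambda$ to be finite, for every tilting $P$ (or at least for every $P$ reachable by iterated mutation). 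Your plan does not explain how to pass from one finite interval to all of them; the Iyama--Yang reduction does not provide this, since it only relates silting objects in a fixed interval to silting objects in a quotient, not all silting objects of $\Lambda$ to a finite set.

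\medskip
\textbf{How the paper closes this gap.} The paper first proves (Theorem~\ref{derivedequiv} and Proposition~\ref{mutationpaths}) that each single rigid mutation $M\leadsto \upnu_i M$ induces a standard derived equivalence $F_i:\Db(\Lambda)\to\Db(\uEnd_\cE(\upnu_i M))$ sending $\upmu_i\Lambda$ to the target algebra, and that these compose along paths. This is then used as a \emph{bootstrapping} device in the proof of tilting-discreteness (Theorem~\ref{contractiltingdis}): for any $P$ obtained from $\Lambda$ by iterated left mutation, the composite equivalence $F_\upalpha$ carries the interval $\twotilt_P\Lambda$ bijectively onto $\twotilt\Gamma$ for $\Gamma=\uEnd_\cE(\upnu_\upalpha M)$, and the latter is finite by the same AIR-type argument applied to $\Gamma$. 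Then one invokes Aihara--Mizuno (Theorem~\ref{tiltingdiscrete}) and Proposition~\ref{tiltingconnected}. In other words, the passage from ``one interval is finite'' to ``all intervals are finite'' is achieved by transporting along the very derived equivalences whose existence you took as formal in your first paragraph.

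\medskip
\textbf{A second difference.} The paper does not complete $M$ to a maximal rigid $T$ and then perform silting reduction. Instead it uses directly a mild generalisation of the AIR bijection (Theorem~\ref{ctbij}), valid for an arbitrary rigid $M$, giving a bijection between $\presilt\Lambda$ and $\rig(M*\Sigma M)$. This avoids the technical verification you flagged as the ``main obstacle'' and makes the argument uniform in $M$.
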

For \textit{maximal rigid objects}, the conditions on $\ucE$ above ensure that any two such objects are connected by a sequence of mutations and so we obtain the following corollary of Theorem \ref{mainresultintro}.
\begin{cor}[Corollary \ref{maxrig}] \label{maxrigintro}
With assumptions as in \ref{mainresultintro}, for any maximal rigid object $M$ of $\ucE$, the basic algebras derived equivalent to $\uEnd_\cE(M)$ are precisely the stable endomorphism algebras of maximal  rigid objects in $\ucE$. In particular, there are only finitely many such algebras.
\end{cor}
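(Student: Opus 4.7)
The plan is to deduce Corollary \ref{maxrigintro} directly from Theorem \ref{mainresultintro}, using only the extra information about mutation between maximal rigid objects that is built into the standing assumptions on $\ucE$.

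First I would apply Theorem \ref{mainresultintro} with the rigid object taken to be the given maximal rigid object $M$. This already identifies the basic algebras derived equivalent to $\uEnd_\cE(M)$ as the stable endomorphism algebras of rigid objects reachable from $M$ by iterated mutation, and already delivers finiteness provided the mutation orbit of $M$ is finite.

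The remaining content is to replace ``rigid objects in the mutation orbit of $M$'' by ``all maximal rigid objects in $\ucE$''. For this I would invoke two properties of the mutation operation in the present 2-Calabi--Yau, Hom-finite, Krull--Schmidt setting: (a) the mutation of a maximal rigid object at an indecomposable summand is again a maximal rigid object, which is a standard fact once $\Sigma^2\cong\id$ is available; and (b) under the hypothesis that $\ucE$ has only finitely many maximal rigid objects, any two of them are linked by a sequence of such mutations. Property (b) is precisely the connectivity statement recalled in the paragraph preceding the Corollary, so I would quote it from the cited references rather than reprove it.

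Combining (a) and (b), the mutation orbit of $M$ coincides with the full set of maximal rigid objects of $\ucE$. Substituting this identification into the conclusion of Theorem \ref{mainresultintro} yields the first assertion of the Corollary, and the finiteness of the derived equivalence class then follows immediately from the standing assumption that there are only finitely many maximal rigid objects. The only real obstacle is making sure that properties (a) and (b) are available in the exact generality assumed here; once that bookkeeping is done, the Corollary is a short formal consequence of Theorem \ref{mainresultintro}.
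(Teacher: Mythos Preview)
Your proposal is correct and follows essentially the same route as the paper: apply Corollary~\ref{mainresult} (the precise form of Theorem~\ref{mainresultintro}) to a maximal rigid $M$, then use the identification $\mut(M)=\mrig\ucE$, which the paper obtains in one line by citing \cite[4.9]{[AIR]} and which you unpack as your properties~(a) and~(b). One small remark: property~(a), that mutation of a maximal rigid object is again maximal rigid, holds in any $2$-CY Krull--Schmidt category and does not require the hypothesis $\Sigma^2\cong\id$.
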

Moreover, the standard derived equivalences between these algebras can be thought of simply as mutation sequences (or equivalently, paths in the mutation graph of rigid objects) in a way made precise in Corollary \ref{standard}. In this way, we are able to produce a `picture' of the derived equivalence class of these algebras (see Example \ref{ex2}). 
\subsection{Geometric Corollaries}
Although the algebraic conditions in Theorem \ref{mainresultintro} may seem restrictive, a large source of examples can be found in the Homological Minimal Model Programme (see \S \ref{geometricapplication} or \cite{HomMMP}). This is an algebraic approach to the classical Minimal Model Programme and our main application of Theorem \ref{mainresultintro} is to study the derived equivalence classes of \emph{contraction algebras} appearing in this setting. 

Given an algebraic variety $X$, the goal of the Minimal Model Programme is to find and study certain birational maps $f \colon Y \to X$, known as \textit{minimal models}. Minimal models of $X$ are not unique but Kawamata \cite{[K]} showed any two minimal models are connected by a sequence of codimension two modifications called \textit{flops}.
For this reason, maps which give rise to flops, known as \textit{flopping contractions}, are of particular interest.

For a $3$-fold flopping contraction $f \colon X \to X_\con$, Donovan--Wemyss introduced an algebraic invariant, known as the \textit{contraction algebra} of $f$. If the map contracts a single curve, this finite dimensional algebra recovers all the previously known numerical invariants \cite{DefFlops, invsing} and has even been used to prove that those numerical invariants can not classify flops completely \cite{GVinv}.
It is further conjectured by Donovan--Wemyss that, in the setting of smooth minimal models, the contraction algebra, or more precisely its derived category, completely determines the geometry.  

\begin{conj} \label{conjwemyss}
Suppose that $f \colon X \to \Spec R$ and $g \colon Y \to \Spec S$ are smooth minimal models of complete local isolated cDV singularities with associated contraction algebras $A_\con$ and $B_\con$. Then $R \cong S$ if and only if $A_\con$ and $B_\con$ are derived equivalent.
\end{conj}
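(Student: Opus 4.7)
The plan is to handle the two implications separately. The forward direction is essentially geometric, while the backward direction is where the results of this paper are designed to intervene.

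For the forward direction, suppose $R \cong S$. By Kawamata's theorem any two minimal models of $\Spec R$ are connected by a finite sequence of flops, so there exists a chain $X = X_0 \dashrightarrow X_1 \dashrightarrow \cdots \dashrightarrow X_n = Y$. Each individual flop gives a derived equivalence between the corresponding contraction algebras (via the Bridgeland--Chen--Van den Bergh flop functor, or directly through the HomMMP dictionary), and composing these yields $\Db(A_\con) \simeq \Db(B_\con)$. This step should follow from machinery already available in the area.

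For the backward direction, I would first realise $A_\con \cong \uEnd_\cE(M)$ in a Frobenius category $\cE = \CM R$ such that $\ucE$ is Hom-finite, $2$-Calabi--Yau with $\Sigma^2 \cong \id$, and has only finitely many maximal rigid objects; here $M$ is the cluster-tilting-like object attached to $f$ in the HomMMP. Verifying the hypotheses of Theorem \ref{mainresultintro} in this setting uses that $R$ is a complete local isolated cDV singularity and is standard input from the HomMMP. Now Corollary \ref{maxrigintro} applies: since $B_\con$ is basic and derived equivalent to $A_\con$, it must be isomorphic to $\uEnd_\cE(N)$ for some maximal rigid $N$ in $\ucE$. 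Translating back through the HomMMP dictionary, $N$ corresponds to a smooth minimal model $f' \colon X' \to \Spec R$, so $B_\con$ is also the contraction algebra of a minimal model over the same base $R$.

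At this point I have two smooth minimal models whose contraction algebras are both isomorphic (not merely derived equivalent) to the basic algebra $B_\con$, but a priori of two different singularities $\Spec R$ and $\Spec S$. The main obstacle, and in my view the crux of the whole problem, is to promote this equality of contraction algebras into an isomorphism $R \cong S$. This is essentially the strong form of the Donovan--Wemyss conjecture, asserting that the contraction algebra alone recovers its singularity up to isomorphism; it is currently known only in restricted settings such as length one, or after reduction to a suitable generic hyperplane section. My proposal therefore does not resolve the conjecture outright, but reduces it cleanly to this remaining isomorphism problem, and it is precisely the finiteness and mutation-connectedness supplied by Corollary \ref{maxrigintro} that allows this reduction.
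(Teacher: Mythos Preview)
The statement you are attempting to prove is labelled a \emph{Conjecture} in the paper, and the paper does not claim to prove it. Immediately after stating it, the paper remarks that the forward direction is already known by iterating \cite{Dugas}, while the backward direction ``remains a key open problem in the Homological Minimal Model Program.'' So there is no proof in the paper to compare your proposal against.

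That said, your analysis lines up precisely with what the paper does establish. Your forward direction is exactly the argument the paper attributes to \cite{Dugas}. Your reduction in the backward direction---using Corollary~\ref{maxrigintro} to conclude that $B_\con$ is isomorphic to the contraction algebra of some minimal model of $\Spec R$---is essentially the content of Corollary~\ref{geommaxrigid} and the first half of the proof of Theorem~\ref{evidence}. And the obstacle you isolate, namely that an isomorphism of contraction algebras (not merely a derived equivalence) should force $R\cong S$, is exactly the residual open problem. You have correctly identified both what the paper proves and where the genuine gap lies; your proposal is not a proof of the conjecture, and neither is anything in the paper.
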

 
The `only if' direction is already known to be true by combining results from \cite{HomMMP} with the main result of \cite{Dugas}, but the `if' direction remains a key open problem in the Homological Minimal Model Programme. This conjecture is the main motivation for studying the derived equivalence classes of contraction algebras.

The connection between flops and cluster-tilting theory has been explored in \cite{HomMMP}, where it is shown that, for certain flopping contractions $f \colon X \to \Spec R$, the contraction algebra is the stable endomorphism algebra of a rigid object in some Frobenius category associated to $R$. In particular, the setting there satisfies the conditions of Theorem \ref{mainresultintro} and thus, we obtain the following result.   

\begin{thm}[Theorem \ref{geomres}]
Suppose that $f\colon X \to \Spec R$ is a 3-fold flopping contraction where $R$ is complete local and $X$ has at worst Gorenstein terminal singularities. Writing $A_\con$ for the associated contraction algebra, the following statements hold.
\begin{enumerate}
\item The basic algebras in the derived equivalence class of $A_\con$ are precisely the contraction algebras of flopping contractions obtained by iterated flops from $f$.
\item There are only finitely many basic algebras in the derived equivalence class of $A_\con$.
\end{enumerate}
\end{thm}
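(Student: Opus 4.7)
The plan is to deduce this theorem as a direct application of Theorem \ref{mainresultintro} (equivalently Corollary \ref{mainresult}) to the Frobenius category that Wemyss attaches to $R$ in the Homological Minimal Model Program \cite{HomMMP}. So the first step is to recall that setup: for a complete local cDV-type base $R$ as in the hypotheses, there is a Frobenius category $\cE$ (essentially a category of modifying modules over $R$, or equivalently a subcategory of $\cm R$) whose stable category $\cC = \ucE$ is $k$-linear, Hom-finite, Krull-Schmidt and $2$-Calabi--Yau, and moreover has the property that the contraction algebra $A_\con$ of $f\colon X\to\Spec R$ arises as $\uEnd_\cE(M)$ for a specific rigid object $M \in \cE$ determined by the minimal model $X$. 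I would cite the relevant results of \cite{HomMMP} for each of these identifications rather than reprove them.

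Next I would verify the two extra hypotheses of Theorem \ref{mainresultintro} in this geometric situation. The condition $\Sigma^2 \cong \id$ follows from the general fact, established in the HomMMP framework, that the suspension on $\ucE$ squares to the identity for the relevant CM-type categories attached to $3$-fold flopping contractions (this is essentially a consequence of the AR translation being trivial on the stable category in the $2$-CY setting together with the Gorenstein terminal hypothesis on $X$). Finiteness of the set of basic maximal rigid objects in $\ucE$ is precisely the algebraic translation of the well-known finiteness of minimal models of $\Spec R$ in the flopping setting; again this is proved in \cite{HomMMP}. With these verified, the hypotheses of Theorem \ref{mainresultintro} hold.

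Applying Theorem \ref{mainresultintro} to $M$ then gives that the basic algebras derived equivalent to $A_\con = \uEnd_\cE(M)$ are exactly the stable endomorphism algebras $\uEnd_\cE(\Mmu{i_t}\cdots\Mmu{i_1}M)$ obtained from $M$ by iterated mutation, and that there are only finitely many of these. To conclude, I need the dictionary, again from \cite{HomMMP}, that mutation of $M$ in $\cE$ corresponds on the geometric side to a flop of the minimal model: if $M'$ is obtained from $M$ by iterated mutation, then there is a $3$-fold flopping contraction $f'\colon X'\to \Spec R$ obtained from $f$ by iterated flops such that $\uEnd_\cE(M')$ is the contraction algebra of $f'$, and conversely every flopping contraction reached from $f$ by iterated flops arises this way. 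Combining these two correspondences gives both (1) and (2).

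The translation step is where I expect the bulk of the work, and the main obstacle, to lie: strictly speaking none of the verifications above is new, but assembling the exact references from \cite{HomMMP} that guarantee $\cE$ is Frobenius, $\ucE$ is $2$-CY with $\Sigma^2\cong\id$, rigid objects correspond to partial contractions, mutation corresponds to flop, and maximal rigid objects are in bijection with minimal models, all in the precise generality (Gorenstein terminal $X$, $R$ complete local) stated in the theorem, requires care. Once this dictionary is in place, however, Theorem \ref{mainresultintro} applies directly and yields both claims of the theorem with no further argument.
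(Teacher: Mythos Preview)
Your proposal is correct and follows essentially the same route as the paper: take $\cE=\CM\rR$, verify Setup~\ref{setup2} holds (the paper does this via Proposition~\ref{prop} and Theorem~\ref{HMMPbij}), apply Corollary~\ref{mainresult}, and then translate mutation of rigid objects into iterated flops using Proposition~\ref{flopsmut}. Two small slips to tidy when you write it up: $\Sigma^2\cong\id$ holds because $\rR$, being cDV, is a hypersurface (syzygy $2$-periodicity), not for the AR-translation reason you sketch; and in this statement $X$ need not be a minimal model, so $M$ is only rigid, not maximal rigid---you already get this right later in your outline, but the first paragraph misstates it.
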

Minimal models are a special case where the contraction algebras correspond to maximal rigid objects, and this leads to the following analogue of Corollary \ref{maxrigintro}. In many ways, this corollary can be viewed as the 3-fold analogue of \cite[5.1]{AiharaMiz}.
\begin{cor}[Corollary \ref{geommaxrigid}]
Suppose that $f\colon X \to \Spec R$ is a minimal model of a complete local isolated cDV singularity and write $A_\con$ for the associated contraction algebra. Then the following statements hold.
\begin{enumerate}
\item The basic algebras in the derived equivalence class of $A_\con$ are precisely the contraction algebras of the minimal models of $\Spec R$.
\item There are only finitely many basic algebras in the derived equivalence class of $A_\con$.
\end{enumerate}
\end{cor}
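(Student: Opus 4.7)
The plan is to deduce this corollary as the maximal-rigid specialization of Theorem \ref{geomres}, by combining the geometric dictionary of \cite{HomMMP} with Corollary \ref{maxrigintro}. The essential content is that, in the minimal model case, the rigid object realizing $A_\con$ as a stable endomorphism algebra is in fact maximal rigid, and flops correspond precisely to mutations of such objects.

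Concretely, I would first recall from \cite{HomMMP} that associated to the complete local isolated cDV singularity $\Spec R$ there is a Frobenius category $\cE$ whose stable category $\ucE$ is a Hom-finite, $k$-linear, Krull--Schmidt, $2$-Calabi--Yau triangulated category with $\Sigma^2 \cong \id$, and that for any minimal model $f\colon X \to \Spec R$ the contraction algebra satisfies $A_\con \cong \uEnd_\cE(M)$ for a \emph{maximal} rigid object $M \in \ucE$. Furthermore, the correspondence matches flops of minimal models with mutations of the corresponding maximal rigid objects, so contraction algebras of minimal models of $\Spec R$ are exactly the stable endomorphism algebras of maximal rigid objects of $\ucE$ obtainable from $M$ by iterated mutation. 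I would also record that under the assumption that the cDV singularity is isolated, $\ucE$ has only finitely many maximal rigid objects (equivalently, $\Spec R$ admits only finitely many minimal models, cf.\ Kawamata's finiteness results fed through the dictionary).

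With these inputs, parts (1) and (2) follow directly. Applying Corollary \ref{maxrigintro} to $\ucE$ and the maximal rigid object $M$ shows that the basic algebras derived equivalent to $A_\con = \uEnd_\cE(M)$ are precisely the stable endomorphism algebras of maximal rigid objects of $\ucE$, and that there are finitely many of them. Translating back via the geometric dictionary identifies these stable endomorphism algebras with contraction algebras of minimal models of $\Spec R$, which proves (1); part (2) is then the finiteness statement just obtained.

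The only genuine work is checking the hypotheses of Corollary \ref{maxrigintro} for the geometric Frobenius category of \cite{HomMMP}, and confirming that the notion of ``mutation of maximal rigid objects'' in $\ucE$ matches ``flop of minimal models''. Both are already established in the literature used by Theorem \ref{geomres}, so the main obstacle is essentially bookkeeping: ensuring that the rigid object produced in the minimal model setting is \emph{maximal} rigid (so that Corollary \ref{maxrigintro}, rather than just Theorem \ref{mainresultintro}, applies) and that every maximal rigid object of $\ucE$ arises as $M_{f'}$ for some minimal model $f'$ of $\Spec R$, so no spurious basic algebras appear in the derived equivalence class.
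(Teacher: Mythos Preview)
Your proposal is correct and follows essentially the same strategy as the paper: verify that $\cE=\CM\rR$ satisfies Setup~\ref{setup2}, use that for a minimal model the associated object $M$ is maximal rigid (Proposition~\ref{rigidcon}), and then invoke the maximal-rigid specialization of the main result together with the bijection of Theorem~\ref{HMMPbij} between minimal models and maximal rigid objects.

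The only packaging difference is in how connectivity is obtained. The paper deduces Corollary~\ref{geommaxrigid} from Theorem~\ref{geomres}(3) together with Kawamata's geometric statement \cite{[K]} that any two minimal models are connected by simple flops, so that ``flopping contractions obtained by iterated flops from $f$'' exhausts all minimal models. You instead apply Corollary~\ref{maxrig} directly, which already has the algebraic connectivity $\mut(M)=\mrig\ucE$ (from \cite[4.9]{[AIR]}) built in, and then translate via Theorem~\ref{HMMPbij}. Through the dictionary these two connectivity statements are equivalent, so the two routes have the same content; your version is marginally more self-contained on the algebraic side, while the paper's makes the geometric input (Kawamata) explicit.
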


This provides a large class of algebras with finite derived equivalence classes. Even in the more restricted setting of Conjecture \ref{conjwemyss}, combining \cite[5.5]{morrison} and \cite[4.10(2)]{HomMMP} shows that the quivers of the associated contraction algebras form seven infinite families. 

In the process of proving the above, we also establish the following.
\begin{thm}[Theorem \ref{evidence}]
Suppose that $f \colon X \to \Spec R$ and $g \colon Y \to \Spec S$ are minimal models of complete local isolated cDV singularities with associated contraction algebras $A_\con$ and $B_\con$. If $A_\con$ and $B_\con$ are derived equivalent then there is a bijection 
\begin{align*}
\{\text{minimal models of $\Spec R$}\} \leftrightarrow \{\text{minimal models of $\Spec S$}\}.
\end{align*}
Further, the bijection preserves both iterated flops and contraction algebras. 
\end{thm}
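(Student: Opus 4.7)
The plan is to apply Corollary \ref{geommaxrigid}(1) on both sides of the hypothesised derived equivalence. That corollary says that, for a minimal model of a complete local isolated cDV singularity, the set of basic algebras (up to isomorphism) in the derived equivalence class of its contraction algebra is precisely the set of contraction algebras of its minimal models. Writing $\cA$ for the set of isomorphism classes of basic algebras derived equivalent to $A_\con$ (and thus, by hypothesis, to $B_\con$), Corollary \ref{geommaxrigid}(1) gives two surjections
\begin{align*}
\Phi_R \colon \{\text{minimal models of }\Spec R\} \twoheadrightarrow \cA, &\qquad X_i \mapsto A_\con(X_i),\\
\Phi_S \colon \{\text{minimal models of }\Spec S\} \twoheadrightarrow \cA, &\qquad Y_j \mapsto B_\con(Y_j),
\end{align*}
and these both land in the same set $\cA$.

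Assuming both $\Phi_R$ and $\Phi_S$ are bijections, one defines the required bijection as $\Phi_S^{-1}\circ\Phi_R$, which preserves contraction algebras by construction. For the preservation of iterated flops, I would invoke the description promised in \S\ref{standard}: the standard derived equivalences among algebras in $\cA$ are exactly the composites of the equivalences produced by single mutations, so paths in the mutation graph of maximal rigid objects are intrinsic to $\cA$. On the geometric side, the HomMMP dictionary matches single mutations with single flops of minimal models. Consequently, if $X_i$ is obtained from $X_{i'}$ by an iterated flop, the corresponding mutation sequence in $\cA$ transports under $\Phi_S^{-1}$ to an iterated flop between $\Phi_S^{-1}(\Phi_R(X_{i'}))$ and $\Phi_S^{-1}(\Phi_R(X_i))$, so iterated flops are preserved.

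The main obstacle is verifying that $\Phi_R$ and $\Phi_S$ are injective, i.e.\ that two non-isomorphic minimal models of a fixed cDV singularity cannot share the same basic contraction algebra. This does not follow formally from Corollary \ref{geommaxrigid} itself and is the substantive geometric input needed beyond it. I expect this to be approached by combining the HomMMP bijection between minimal models and basic maximal modification modules with the injectivity of the assignment $M \mapsto \uEnd_\cE(M)$ for basic maximal rigid objects in the associated Frobenius category; the latter should in turn be extracted from the mutation-graph description in \S\ref{standard}, since each vertex of that graph carries the extra structure of its incident mutation/flop edges, which an isomorphism of algebras would have to respect.
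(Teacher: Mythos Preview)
Your approach has a genuine gap that cannot be repaired: the injectivity of $\Phi_R$ and $\Phi_S$ that you flag as the ``main obstacle'' is in fact \emph{false} in general. The paper's own $cA_2$ example (Figures~\ref{graphs} and~\ref{quivers}) exhibits six minimal models of $\Spec\rR$ but only three isomorphism classes of contraction algebras, each occurring twice. So the map $\Phi_R$ from minimal models to isomorphism classes of algebras is two-to-one there, and your composite $\Phi_S^{-1}\circ\Phi_R$ is not well defined. Your suggested rescue---that an algebra isomorphism would have to respect the incident mutation edges---also fails in this example: the mutation graph has an evident symmetry swapping $M_{\id}$ and $M_{(13)}$, so nothing intrinsic to the algebra distinguishes the two vertices.

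The paper avoids this problem by never passing through the set $\cA$ of isomorphism classes of algebras. Instead it uses $\twosilt B_\con$ as the intermediary. After identifying $B_\con$ with a contraction algebra $\uEnd_\rR(M')$ on the $\rR$-side via Corollary~\ref{geommaxrigid}, Theorem~\ref{ctbij} and Remark~\ref{maxrigremark}(2) give mutation-preserving bijections
\[
\mrig\ucmrr \longleftrightarrow \twosilt B_\con \longleftrightarrow \mrig\ucmss,
\]
and Theorem~\ref{HMMPbij} identifies the outer sets with minimal models on each side. Two-term silting complexes are a strictly finer invariant than their endomorphism algebras (there can be several non-isomorphic $P\in\twosilt B_\con$ with isomorphic $\End(P)$), which is exactly what is needed. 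Preservation of contraction algebras then follows because corresponding minimal models are reached by the \emph{same} mutation sequence applied to $B_\con$, so Theorem~\ref{geomres}(2) computes both contraction algebras as the same endomorphism ring.
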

This gives the first concrete evidence towards Conjecture \ref{conjwemyss}, as it shows that $\Spec R$ and $\Spec S$ must at least have the same number of minimal models and further that the simple flops graphs of their minimal models must be isomorphic.  

\subsection{Conventions} Throughout, $k$ will denote an algebraically closed field of characteristic zero. For a ring $A$, we denote the category of finitely generated right modules by $\mod A$. For $M \in \mod A$, we let $\add M$ be the full subcategory consisting of summands of finite direct sums of copies of $M$ and we let $\proj A \colonequals \add A$ be the category of finitely generated projective modules. Finally, $\Kb(\proj A)$ will denote the homotopy category of bounded complexes of finitely generated projectives and $\Db(A) \colonequals \Db(\mod A)$ will denote the bounded derived category of $\mod A$. 

\subsection{Acknowledgments} 
While this work was carried out, the author was a student at the University of Edinburgh and the material contained in this paper formed part of her PhD thesis. The author would like to thank her supervisor Michael Wemyss for his helpful guidance and the Carnegie Trust for the Universities of Scotland for their financial support. Furthermore, the author would like to thank the referees for their careful reading and valuable suggestions, and the Max Planck Institute for Mathematics in Bonn for its hospitality and financial support during the editing process.

\section{Preliminaries}

\subsection{Silting Theory}

In this subsection we recall silting and tilting theory for a finite dimensional $k$-algebra $\Lambda$. Note that for such a $\Lambda$, the bounded homotopy category $\Kb(\proj \Lambda)$ is a $k$-linear, Hom-finite, Krull-Schmidt triangulated category.
\begin{definition}
A complex $P \in \Kb(\proj \Lambda)$ is called:
\begin{enumerate}
\item \emph{presilting} (respectively \emph{pretilting}) if $\Hom_{\Lambda}(P,P[n])=0$ for all $n >0$ (respectively for all $n \neq 0$).
\item \emph{silting} (respectively \emph{tilting}) if $P$ is presilting (respectively pretilting) and further the smallest full triangulated subcategory of $\Kb(\proj \Lambda)$ containing $P$ and closed under forming direct summands is $\Kb(\proj \Lambda)$.
\end{enumerate}
\end{definition}
We will write $\silt \Lambda$ (respectively $\tilt \Lambda$) for the set of isomorphism classes of basic silting (respectively tilting) complexes in $\Kb(\proj \Lambda)$. Note that $\Lambda \in \tilt \Lambda$ and that all $T \in \silt \Lambda$ have the same number of indecomposable summands \cite[2.8]{siltingmutation}. 

\subsubsection{Derived Equivalences} \label{derivedprelim}Tilting objects are of interest due to the following well-known theorem, connecting them with derived equivalences.

\begin{thm} \cite{morita}\label{rickards}
For each tilting complex $T \colonequals \bigoplus_{i=1}^n T_i$ in $\Kb(\proj \Lambda)$ there exists a triangle equivalence $\Db(\Lambda) \to \Db(\End_{\Lambda}(T))$ sending $T_i \mapsto \Hom_\Lambda(T, T_i)$. Moreover, a basic finite dimensional algebra $\Gamma$ is derived equivalent to $\Lambda$ if and only if there exists $T \in \tilt \Lambda$ such that $\Gamma \cong \End_{\Lambda}(T)$. 
\end{thm}

If we restrict to standard derived equivalences, the connection with tilting theory becomes even stronger. 
\begin{definition}
A triangle equivalence $F \colon \Db(\Lambda) \to \Db(\Gamma)$ is called \emph{standard} if it is isomorphic to 
\begin{align*}
\RHom_{\Lambda}(\tT,-)
\end{align*}
for some complex $\tT$ of $\Gamma$-$\Lambda$-bimodules. In this case, we call $\tT$ a \emph{two-sided tilting complex}.
\end{definition}
It is shown in \cite[4.1]{standard} that the composition of standard equivalences is again standard and further, the inverse of a standard equivalence is also standard. Moreover, for any tilting complex $T \in \Db(\Lambda)$, it is shown in \cite{keller} that there exists a two-sided tilting complex $\tT$ such that $T \cong \tT$ in $\Db(\Lambda)$. This induces a standard equivalence
\begin{align*}
\RHom_\Lambda(\tT, -) \colon \Db(\Lambda) &\to \Db(\End_\Lambda(T))
 \end{align*} 
which maps $T \mapsto \End_\Lambda(T)$. It was further shown in \cite[2.1]{kellerhomotopy} that such a $\tT$ is unique in a suitable sense. For our purposes, the following suffices.
\begin{proposition}\cite[2.3]{rouquier} \label{rouq}
Suppose that $\Lambda$, $\Gamma$ and $\Gamma'$ are $k$-algebras and that $T$ (respectively $T'$) is a two-sided tilting complex of $\Lambda$-$\Gamma$-bimodules (respectively $\Lambda$-$\Gamma'$-bimodules) with $\End_\Lambda(T) \cong \End_\Lambda(T')$. Then $T_\Lambda \cong T_\Lambda'$ if and only if there exists an isomorphism $\upgamma \colon \Gamma \to \Gamma'$ such that 
\begin{align*}
T \cong {}_\upgamma \Gamma' \otimes_{\Gamma'} T'
 \end{align*} 
 in the derived category of $\Gamma$-$\Lambda$ bimodules.
\end{proposition}
In particular, for any tilting complex there is a unique (up to algebra isomorphism) standard equivalence induced by the tilting complex.
\subsubsection{Mutation}
Although silting complexes do not necessarily induce derived equivalences, the advantage of silting over tilting complexes is that they have a well-behaved notion of mutation. To define this we require the following.

Suppose that $\cD$ is an additive category and $\cS$ is a class of objects in $\cD$.
\begin{enumerate}
\item A morphism $f \colon X \to Y$ is called a \emph{right $\cS$-approximation} of $Y$ if $X \in \cS$ and the induced morphism $\Hom(Z,X) \to \Hom(Z,Y)$ is surjective for any $Z \in \cS$.
\item A morphism $f \colon X \to Y$ is said to be \emph{right minimal} if for any $g \colon X \to X$ such that $f \circ g= f$, then $g$ must be an isomorphism.
\item A morphism $f \colon X \to Y$ is a \emph{minimal right $\cS$-approximation} if $f$ is both right minimal and a right $\cS$-approximation of $Y$.
\end{enumerate}
There is also the dual notion of a (minimal) left $\cS$-approximation.
\begin{definition} \cite[2.31]{siltingmutation}\label{mutdef}
Let $P \in \Kb(\proj \Lambda)$ be a basic silting complex for $\Lambda$, and write $P \colonequals \bigoplus\limits_{i=1}^n P_i$ where each $P_i$ is indecomposable. Consider a triangle
\begin{align*}
P_i \xrightarrow{f} P' \to Q_i \to P_i[1] 
\end{align*} 
where $f$ is a minimal left $\add(P/P_i)$-approximation of $P_i$. Then $\upmu_i(P)\colonequals (P/P_i) \oplus Q_i$ is also a silting complex, known as the \emph{left mutation} of $P$ with respect to $P_i$. 
\end{definition}
Right mutation is defined dually and is denoted $\upmu_i^{-1}$ as it is inverse to left mutation \cite[2.33]{siltingmutation}\footnote{Note that this notation for left/right follows \cite{siltingmutation} and \cite{AiharaMiz}, but is opposite to the conventions in \cite{[AIR]}.}. For general $\Lambda$, the mutation of a tilting complex may not be a tilting complex. However, it is well known (see e.g. \cite[2.8]{siltingmutation}) that if $\Lambda$ is a symmetric algebra, i.e. $\Lambda \cong \Hom_k(\Lambda, k)$ as $\Lambda$-$\Lambda$ bimodules, then any silting complex is a tilting complex and hence any mutation of a tilting complex is again a tilting complex. This will be the case in our setting later. 

To help control mutations, Aihara--Iyama introduced a partial order on $\silt \Lambda$ \cite{siltingmutation}, which generalised the partial order on tilting modules from \cite{rs}.
\begin{definition}
Let $P$ and $Q$ be silting complexes for $\Lambda$. If $\Hom_\Lambda(P,Q[i])=0$ for all $i >0$, then we say $P \geq Q$. Further,  we write $P > Q$ if $P \geq Q$ and $P \ncong Q$.
\end{definition}
This order can determine whether two silting complexes are related by mutation. 

\begin{thm}\cite[3.5]{tiltingconnectedness} \label{leftconnected}
Suppose that $T,U$ are two basic silting complexes for $\Lambda$ with $T \geq U$. If there are only finitely many basic silting complexes $P$ such that $T \geq P \geq U$, then $U$ is obtained by iterated left mutation from $T$ or equivalently, $T$ is obtained by iterated right mutation from $U$.
\end{thm}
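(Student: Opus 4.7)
The plan is to induct on $n\colonequals|\{P\in\silt\Lambda:T\geq P\geq U\}|$, which is finite by hypothesis, and at each step extract an immediate predecessor of the top element using Theorem \ref{siltingmutation2}. This is a purely order-theoretic argument: the silting mutation theorem already translates ``immediate predecessor'' into ``left mutation'', so the only real work is to produce such a predecessor inside the interval.

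For the base case $n=1$ we must have $T=U$ and the empty sequence of mutations suffices. For the inductive step, assume $n\geq 2$, so that the set
\[
S\colonequals\{P\in\silt\Lambda:T>P\geq U\}
\]
is nonempty (it contains $U$) and finite. Since $\geq$ is a partial order on $\silt\Lambda$, the finite set $S$ has a maximal element $T_1$. I claim that $T_1$ is in fact an immediate predecessor of $T$ in the whole poset $\silt\Lambda$, not merely in $S$: indeed, if some silting $Q$ satisfied $T>Q>T_1$, then transitivity of $\geq$ would force $Q\geq U$ and hence $Q\in S$, contradicting maximality of $T_1$. Applying Theorem \ref{siltingmutation2} to the pair $T>T_1$ then yields a summand $T_i$ of $T$ with $T_1=\upmu_i(T)$.

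It remains to iterate. The interval $\{P\in\silt\Lambda:T_1\geq P\geq U\}$ is contained in $\{P\in\silt\Lambda:T\geq P\geq U\}$ but no longer contains $T$, so its cardinality is at most $n-1$. By the inductive hypothesis, $U$ is obtained from $T_1$ by a finite sequence of left mutations, and prepending the mutation $T\rightsquigarrow T_1$ gives such a sequence from $T$ to $U$. The final ``equivalently'' clause is immediate from the fact, recorded after Definition \ref{mutdef}, that right mutation is inverse to left mutation.

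The only delicate point in the argument is the verification that a maximal element of $S$ is a genuine immediate predecessor of $T$ in $\silt\Lambda$; once transitivity of $\geq$ is invoked this is automatic, so I expect no serious obstacle. What is essential is the finiteness hypothesis: without it, $S$ could be an infinite descending set below $T$ with no maximal element, and the construction would fail—this is precisely the reason the statement is not unconditional.
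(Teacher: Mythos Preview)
Your argument is correct and is essentially the standard proof of this statement. Note, however, that the paper does not supply its own proof: the theorem is simply quoted from \cite[3.5]{tiltingconnectedness}, so there is nothing in the paper to compare against beyond the citation. Your induction on the cardinality of the interval, extracting a maximal element of $\{P:T>P\geq U\}$ and invoking Theorem~\ref{siltingmutation2} to identify it as a left mutation of $T$, is exactly the approach taken in Aihara's original paper.
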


The following result, which is implicit in the literature, will be useful for tracking silting complexes through derived equivalences. 
\begin{lemma} \label{tracking}
Let $\Lambda$ and $\Gamma$ be finite dimensional $k$-algebras and $F \colon \Db(\Lambda) \to \Db(\Gamma)$ be a triangle equivalence. Then the following statements hold.
\begin{enumerate}
\item $F$ maps silting complexes to silting complexes.
\item $F$ preserves the silting order.
\item If $P$ is a silting complex for $\Lambda$, then 
\begin{align*}
F(\upmu_i(P)) \cong  \upmu_i(F(P)) \quad \text{and} \quad F(\upmu^{-1}_i(P)) \cong  \upmu^{-1}_i(F(P)).
\end{align*}
\end{enumerate}
\end{lemma}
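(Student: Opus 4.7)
The plan is to deduce all three statements from a single structural observation: any triangle equivalence $F\colon\Db(\Lambda)\to\Db(\Gamma)$ restricts to a triangle equivalence $\Kb(\proj\Lambda)\to\Kb(\proj\Gamma)$. This is because $\Kb(\proj\Lambda)$ is characterised intrinsically inside $\Db(\Lambda)$ as the full subcategory of compact (equivalently, perfect) objects, and triangle equivalences preserve compactness. Once this restriction is in hand, parts (1) and (2) are immediate: the presilting condition $\Hom(P,P[n])=0$ for $n>0$, the generation condition, and the silting order $\Hom(P,Q[i])=0$ for $i>0$ are all defined in terms of Hom groups and triangulated structure, and $F$ induces isomorphisms on Hom groups and preserves generation as a triangulated category, so each of these properties transfers from $P,Q$ to $F(P),F(Q)$.

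For part (3), I would unpack Definition \ref{mutdef}. Applying $F$ to the defining triangle
\[
P_i \xrightarrow{f} P' \to Q_i \to P_i[1]
\]
yields a triangle
\[
F(P_i)\xrightarrow{F(f)} F(P') \to F(Q_i) \to F(P_i)[1]
\]
in $\Kb(\proj\Gamma)$. Since $P'\in\add(P/P_i)$ and $F$ is additive, $F(P')\in\add(F(P)/F(P_i))$. It then suffices to verify that $F(f)$ is a minimal left $\add(F(P)/F(P_i))$-approximation of $F(P_i)$. The approximation property follows because any morphism $F(P_i)\to X$ with $X\in\add(F(P)/F(P_i))$ corresponds, via a fixed quasi-inverse of $F$, to a morphism $P_i\to Y$ with $Y\in\add(P/P_i)$; this factors through $f$ by hypothesis, and applying $F$ produces the required factorisation through $F(f)$. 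For minimality, if $g\colon F(P')\to F(P')$ satisfies $g\circ F(f)=F(f)$, then the endomorphism of $P'$ corresponding to $g$ under the quasi-inverse also satisfies the analogous identity and is thus an isomorphism; since $F$ reflects isomorphisms, so is $g$. Putting everything together gives
\[
F(\upmu_i(P))=F(P/P_i\oplus Q_i)\cong F(P)/F(P_i)\oplus F(Q_i)=\upmu_i(F(P)),
\]
and the argument for right mutation is entirely dual.

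I do not anticipate a genuine obstacle here; the lemma is essentially a formal unwinding of definitions once one accepts that a derived equivalence restricts to the subcategory of compact objects. The only point requiring mild care is the bookkeeping with a quasi-inverse in the approximation and minimality arguments, but this is routine since $F$ is fully faithful and essentially surjective.
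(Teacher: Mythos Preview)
Your proof is correct. Parts (1) and (2) match the paper's approach exactly: both rely on the fact that $F$ restricts to an equivalence $\Kb(\proj\Lambda)\to\Kb(\proj\Gamma)$ (the paper cites Rickard for this, you invoke compactness), and then the presilting, generation, and order conditions transfer formally.

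For part (3) the approaches diverge. You work directly from Definition~\ref{mutdef}: you apply $F$ to the exchange triangle and verify that $F(f)$ is again a minimal left $\add(F(P)/F(P_i))$-approximation, using full faithfulness of $F$ to transport the approximation and minimality properties. The paper instead uses the order-theoretic characterisation of mutation from Theorem~\ref{siltingmutation2}: since $P>\upmu_i(P)$ with nothing strictly between, and $F$ preserves the silting order by part (2), the same holds for $F(P)>F(\upmu_i(P))$, forcing $F(\upmu_i(P))\cong\upmu_j(F(P))$ for some $j$; a comparison of summands then gives $j=i$. Your argument is more elementary and self-contained, avoiding the dependence on Theorem~\ref{siltingmutation2}, while the paper's argument is shorter once that theorem is available and also illustrates why the order-preservation in part (2) is useful. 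The paper derives the right-mutation statement from the left-mutation one via $\upmu_i^{-1}\upmu_i\cong\id$, whereas you appeal to duality; both are fine.
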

\begin{proof} 

\begin{enumerate}[leftmargin=0cm,itemindent=.6cm,labelwidth=\itemindent,labelsep=0cm,align=left]
\item A standard result of Rickard \cite[6.2]{morita} states that $F$ restricts to an equivalence
\begin{align*}
\Kb(\proj \Lambda) \to \Kb(\proj \Gamma). 
\end{align*}
Using that equivalences are fully faithful and must preserve generators easily establishes that the properties of silting complexes are also preserved. 
\item Again, this is a simple consequence of the fully faithful property of equivalences. 
\item Since $F$ is a triangle equivalence, it preserves both triangles and minimal left/right approximations. Thus, the exchange triangle defining $\upmu_i(P)$ is mapped under $F$ to the exchange triangle defining $\upmu_i(F(P))$ and so the first statement, and similarly the second, follow.
\qedhere
\end{enumerate}
\end{proof}

\subsubsection{Two-Term Silting Complexes}
Two-term silting complexes are an important class of silting complexes that have connections with cluster-tilting theory. We recall their definition here.
\begin{definition} \label{definetwoterm}
A presilting complex $P \in \Kb(\proj \Lambda)$ is called \emph{two-term} if the terms are zero in every degree other than $0$ and $-1$, or equivalently by \cite[2.9]{tiltingconnectedness}, if $\Lambda \geq P \geq \Lambda[1]$. 
\end{definition}
We denote the set of isomorphism classes of basic two-term silting (respectively presilting, tilting) complexes as $\twosilt \Lambda$ (respectively $\presilt \Lambda$, $\twotilt \Lambda$). 
The following shows that mutation of two-term silting complexes is particularly well behaved.
\begin{proposition}\cite[3.8]{[AIR]} \label{twotermmutation}
Suppose $P$ and $Q$ are basic two-term silting complexes for $\Lambda$. Then $P$ and $Q$ are related by a single mutation if and only if they differ by exactly one indecomposable summand. 
\end{proposition}

\subsubsection{Silting-Discreteness}
This subsection recalls silting-discrete algebras, first introduced in \cite{tiltingconnectedness}.  
\begin{definition}
A finite dimensional algebra $\Lambda$ is said to be \emph{silting-discrete} if for any $P \in \silt \Lambda$ and any $\ell >1$, the set
\begin{align*}
 \lsilt_P \Lambda \colonequals \{ \ T \in \silt  \Lambda \mid P \geq T \geq P[\ell-1] \ \}
 \end{align*} 
is finite. Further, $\Lambda$ is called \emph{$2$-silting-finite} if $\twosilt_P \Lambda$ is a finite set for any $P \in \silt \Lambda$.

\end{definition}
Notice that $\twosilt_\Lambda \Lambda =\twosilt \Lambda$. The key advantage of silting-discrete algebras is the following, first shown in \cite[3.9]{tiltingconnectedness} but provided here with a proof for convenience.
\begin{proposition} \label{tiltingconnected}
If $\Lambda$ is a silting-discrete finite dimensional algebra then any silting complex $T \in \Kb(\proj \Lambda)$ can be obtained from $\Lambda$ by finite iterated mutation. 
\end{proposition}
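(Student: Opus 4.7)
The plan is to sandwich both $\Lambda$ and $T$ inside a single finite interval of the silting order determined by two shifts of $T$, and then apply Theorem \ref{leftconnected} twice to connect each of them to the top of that interval.

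First I would observe that if $T \in \Kb(\proj \Lambda)$ has nonzero terms only in degrees $[a,b]$, then a direct computation of morphism spaces in $\Kb(\proj \Lambda)$ gives $T[m] \geq \Lambda$ for all $m \leq a$ and $\Lambda \geq T[m]$ for all $m \geq b$. Choosing $m \leq \min\{0,a\}$ and $\ell \geq \max\{1-m,\, b-m+1\}$ therefore produces
\[ T[m] \;\geq\; \Lambda \;\geq\; T[m+\ell-1]. \]
Moreover, since $T$ is tilting we have $\Hom_{\Kb(\proj \Lambda)}(T, T[i]) = 0$ for all $i \neq 0$, and combined with $m \leq 0 \leq m+\ell-1$ this also yields $T[m] \geq T \geq T[m+\ell-1]$. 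Hence both $T$ and $\Lambda$ lie in the tilting interval $\ltilt_{T[m]} \Lambda$, which by tilting-discreteness of $\Lambda$ is finite.

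Applying Theorem \ref{leftconnected} to the two pairs $(T[m], T)$ and $(T[m], \Lambda)$ then expresses each of $T$ and $\Lambda$ as an iterated left mutation of $T[m]$. Reversing the second sequence (since right mutation is inverse to left mutation) yields a finite chain of right mutations from $\Lambda$ to $T[m]$; concatenating this with the iterated left mutation sequence from $T[m]$ to $T$ produces the desired finite sequence of mutations from $\Lambda$ to $T$.

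The main obstacle I would need to address is a compatibility gap between hypothesis and tool: Theorem \ref{leftconnected} counts \emph{silting} complexes in the interval, whereas tilting-discreteness a priori controls only the \emph{tilting} ones. To close this gap I would invoke the standard fact from the Aihara--Iyama framework (cf.\ \cite{siltingmutation, AiharaMiz}) that for a tilting-discrete algebra every silting complex lying in a bounded interval between two tilting complexes is itself tilting, so the finite set $\ltilt_{T[m]} \Lambda$ exhausts all silting complexes between $T[m+\ell-1]$ and $T[m]$ and Theorem \ref{leftconnected} applies verbatim.
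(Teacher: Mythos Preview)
Your approach is essentially the same as the paper's: sandwich the relevant complexes in a finite tilting interval, apply Theorem~\ref{leftconnected} to connect each to a common endpoint, and concatenate the resulting mutation sequences. The paper instead sandwiches $T$ between shifts of $\Lambda$ (citing \cite[2.9]{tiltingconnectedness} for $\Lambda[n] \geq T \geq \Lambda[m]$) and then splits into the cases $n \geq 0$ and $n < 0$, using $\Lambda$ itself or $\Lambda[n]$ as the intermediate node; your choice to sandwich both $T$ and $\Lambda$ between shifts of $T$ is a cosmetic variant that avoids the case split. One point of caution: the ``standard fact'' you invoke to bridge the silting/tilting gap is not obviously available for arbitrary finite dimensional algebras --- the references you cite establish it for symmetric algebras, where silting and tilting coincide outright. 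The paper's own proof glosses over exactly the same gap, and in the downstream applications $\Lambda$ is symmetric, so this is harmless in context; but if you want the proposition to stand in full generality you should either locate a precise reference for that claim or restrict the statement.
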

\begin{proof}
Choose a silting complex $T$. By \cite[2.9]{tiltingconnectedness}, there exists integers $m>n$ such that $\Lambda[n] \geq T \geq \Lambda[m]$. We now split the proof into two cases: when $n \geq 0$, and when $n<0$.

If $n \geq 0$, the set $\{Q \in \silt \Lambda \mid \Lambda \geq Q \geq T \} \subseteq (m+1)$-$\silt_\Lambda \Lambda$ and hence is finite by the silting-discrete assumption. Thus, by Theorem \ref{leftconnected}, $T$ is obtained from $\Lambda$ by iterated left mutation.

If $n<0$, the set $(1-n)$-$\silt_{\Lambda[n]} \Lambda = \{Q \in \silt \Lambda \mid \Lambda[n] \geq Q \geq \Lambda\}$ is finite as $\Lambda$ is silting-discrete. Using Theorem \ref{leftconnected}, this shows $\Lambda[n]$ can be obtained by iterated right mutation from $\Lambda$. Further, $\{Q \in \silt \Lambda \mid \Lambda[n] \geq Q \geq T \} \subseteq (m-n+1)$-$\silt_{\Lambda[n]} \Lambda$ is also finite, showing $T$ can be obtained from $\Lambda[n]$ by iterated left mutation, again using Theorem \ref{leftconnected}. Combining these mutation sequences proves the result. 
\end{proof}
The following result establishes equivalent conditions for an algebra to be silting-discrete.
\begin{thm}\cite[2.4]{AiharaMiz} \label{tiltingdiscrete}
Let $\Lambda$ be a finite dimensional algebra. Then the following are equivalent:
\begin{enumerate}
\item $\Lambda$ is silting-discrete.
\item $\Lambda$ is $2$-silting-finite.
\item $\twosilt_P \Lambda$ is a finite set for any silting complex $P$ which is given by iterated left mutation from $\Lambda$.
\end{enumerate}
\end{thm}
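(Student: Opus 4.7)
The implications $(1) \Rightarrow (2) \Rightarrow (3)$ are immediate from the definitions: the first by specialising to $\ell = 2$, and the second by restricting the class of tilting complexes $P$ under consideration. The content of the theorem is therefore $(3) \Rightarrow (1)$.

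The plan for $(3) \Rightarrow (1)$ is to first establish, by induction on $\ell \geq 2$, that $\ltilt_\Lambda \Lambda$ is finite and that every element is obtained from $\Lambda$ by iterated irreducible left mutation. The base case $\ell = 2$ is condition (3) applied to the empty mutation sequence (yielding finiteness of $\twotilt_\Lambda \Lambda$) together with Theorem \ref{leftconnected}, which upgrades finiteness of the interval $[\Lambda, \Lambda[1]]$ to the statement that every element is obtained by iterated left mutation, hence by iterated irreducible left mutation via Theorem \ref{siltingmutation2}. For the inductive step, assume the statement for $\ell$ and take $T \in (\ell+1)\text{-}\tilt_\Lambda \Lambda$. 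The goal is to exhibit a tilting complex $U$ with $\Lambda \geq U \geq \Lambda[\ell-1]$ and $U \geq T \geq U[1]$, so that $T \in \twotilt_U \Lambda$. By the inductive hypothesis there are only finitely many such $U$, and each is reachable from $\Lambda$ by iterated irreducible left mutation; hypothesis (3) then forces each $\twotilt_U \Lambda$ to be finite, and their finite union covers $(\ell+1)\text{-}\tilt_\Lambda \Lambda$.

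To bootstrap from $P = \Lambda$ to an arbitrary $P \in \tilt \Lambda$, I would invoke \cite[2.9]{tiltingconnectedness} (as in the proof of Proposition \ref{tiltingconnected}) to obtain integers $n \leq m$ with $\Lambda[m] \geq P \geq \Lambda[n]$. Any $T \in \ltilt_P \Lambda$ then satisfies $\Lambda[m] \geq T \geq \Lambda[n + \ell - 1]$, and because the shift functor is a derived self-equivalence that preserves the silting order (Lemma \ref{tracking}), the resulting interval is in bijection with $(n - m + \ell)\text{-}\tilt_\Lambda \Lambda$, which is finite by the previous step.

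The main obstacle is constructing the intermediate $U$ in the inductive step. The natural candidate is a lattice-theoretic join of $T$ with $\Lambda[\ell-1]$ in the silting order, or equivalently a carefully chosen truncation along a mutation path from $\Lambda$ to $T$; but verifying that such a $U$ is an honest tilting complex with the required one-step mutation relationship to $T$ requires a delicate analysis of approximation triangles and genuinely uses the symmetry of $\Lambda$ (which guarantees silting equals tilting, so that mutations stay inside $\tilt \Lambda$). This technical step is the heart of the Aihara--Mizuno argument, and is where a self-contained proof would invest most of its effort.
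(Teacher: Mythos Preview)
The paper does not contain a proof of this theorem: it is simply quoted from \cite[2.11]{AiharaMiz} with no argument supplied. There is therefore nothing in the present paper to compare your proposal against.

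That said, your outline is a reasonable sketch of the Aihara--Mizuno strategy. Two small remarks. First, your bootstrap inequality is written backwards: for $n \leq m$ one has $\Lambda[n] \geq \Lambda[m]$ in the silting order (since $\Lambda \geq \Lambda[1]$), so the correct bracketing is $\Lambda[n] \geq P \geq \Lambda[m]$, not the reverse. Second, you are right that the entire weight of the argument lies in producing the intermediate complex $U$ with $\Lambda \geq U \geq \Lambda[\ell-1]$ and $U \geq T \geq U[1]$; your proposal acknowledges this but does not actually carry it out, so as written it is an outline rather than a proof. The symmetry hypothesis is used exactly where you indicate, to ensure that every silting complex encountered is already tilting so that the hypothesis (3) applies at each stage of the induction.
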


\subsection{Cluster-Tilting Theory}
Throughout this subsection, $\cC$ will denote a $k$-linear Hom-finite, Krull-Schmidt, 2-Calabi-Yau triangulated category with shift functor $\Sigma$. The property 2-Calabi-Yau (2-CY) means that there are bifunctorial isomorphisms
\begin{align*}
\Hom_\cC(M,N[2]) \cong D\Hom_\cC(N,M)
\end{align*} 
for all $M,N \in \cC$ where $D \colonequals \Hom_k(-,k)$. 
\begin{definition} \label{definerigid}
Let $M \in \cC$. 
\begin{enumerate}
\item $M$ is called \emph{rigid} if $\mathrm{Hom}_{\cC}(M,\Sigma M)=0$.
\item $M$ is called \emph{maximal rigid} if $M$ is rigid and if $M \oplus X$ is rigid for some $X \in \cC$, then $X \in \add(M)$.
\end{enumerate}
\end{definition}
Write $\rig \cC$ for the set of basic rigid objects in $\cC$ and write $\mrig \cC$ for the set of basic maximal rigid objects (both taken up to isomorphism). Further, if $\cS$ is a collection of objects in $\cC$, we write $\rig \cS$ for the intersection $\cS \cap \rig \cC$. The mutation of these objects is defined similarly to the mutation of silting complexes.
\vspace{-1mm}
\begin{definition} \label{mutaterigid}
Suppose that $M \colonequals \bigoplus\limits_{i=1}^n M_i$ is a basic rigid object in $\cC$ with each $M_i$ indecomposable. Consider a triangle
\begin{align*}
M_i \xrightarrow{f_i} V_i \to N_i \to \Sigma M_i
\end{align*} 
where $f_i$ is a minimal left $\add(M/M_i)$-approximation of $M_i$. Then $\upnu_i(M)\colonequals (M/M_i) \oplus N_i$ is also a rigid object, known as the \emph{left mutation} of $M$ with respect to $M_i$. We call the triangle an \emph{exchange triangle}.
\end{definition}
Right mutation is defined dually and we denote it by $\upnu_i^{-1}$.  As with silting complexes, right and left mutation are inverse operations.
\vspace{-1mm}
\begin{lemma} \label{rightleftmut}
For any rigid object $M \colonequals \bigoplus\limits_{i=1}^n M_i \in \cC$ and any $i$, $\upnu_i \upnu_i^{-1} M \cong M \cong \upnu_i^{-1} \upnu_i M$.
\end{lemma}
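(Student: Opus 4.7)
The plan is to show that the very triangle used to define $\upnu_i(M)$ also serves, after relabelling, as the defining triangle for $\upnu_i^{-1}$ applied to $\upnu_i(M)$; the identity $\upnu_i^{-1}\upnu_i M \cong M$ then falls out immediately, and the reverse composition is handled by the formally dual argument. Write $M = M_i \oplus M'$ with $M' \colonequals M/M_i$ and let
\[
M_i \xrightarrow{f} V \xrightarrow{g} N_i \to \Sigma M_i \qquad(\ast)
\]
be the exchange triangle of Definition \ref{mutaterigid}, so that $f$ is a minimal left $\add M'$-approximation and $\upnu_i(M) = N_i \oplus M'$. Everything reduces to showing that $g\colon V \to N_i$ is a \emph{minimal right} $\add M'$-approximation of $N_i$: the triangle $(\ast)$ then qualifies as the exchange triangle defining $\upnu_i^{-1}$ at the summand $N_i$ of $\upnu_i(M)$, and therefore produces $\upnu_i^{-1}(\upnu_i M) = M_i \oplus M' = M$.

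The approximation property drops out of applying $\Hom_\cC(X,-)$ to $(\ast)$ for $X \in \add M'$, since the resulting long exact sequence terminates in $\Hom_\cC(X,\Sigma M_i)$, which vanishes by the rigidity of $M$. For minimality, I would compare $g$ against an actual minimal right approximation $g^\ast \colon V^\ast \to N_i$. The approximation property of $g$ and $g^\ast$ furnishes maps $\alpha \colon V \to V^\ast$ and $\beta \colon V^\ast \to V$ with $g^\ast\alpha = g$ and $g\beta = g^\ast$, and the identity $g^\ast(\alpha\beta) = g^\ast$ forces $\alpha\beta$ to be invertible by right minimality of $g^\ast$. Consequently $\beta$ is a split monomorphism, producing a decomposition $V = V^\ast \oplus C$ in which $g$ restricts to $g^\ast$ on $V^\ast$ and to $0$ on $C$; moreover $C \in \add M'$ since $\add M'$ is closed under summands and $V \in \add M'$.

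The hard step is ruling out $C \neq 0$. Since $g|_C = 0$, the inclusion $C \hookrightarrow V$ lifts through $f$ to some $c\colon C \to M_i$ with $fc = \mathrm{incl}_C$; composing with the projection $\pi_C\colon V \twoheadrightarrow C$ from the splitting gives $(\pi_C f)\circ c = \id_C$, so $\pi_C f\colon M_i \to C$ is a split epimorphism. Because $M_i$ is indecomposable and $\cC$ is Krull--Schmidt, this forces either $C = 0$ or $C \cong M_i$; the latter would place $M_i$ into $\add M'$, contradicting the assumption that $M$ is basic. Hence $C = 0$, so $g$ is minimal and the argument concludes. The reverse identity $\upnu_i\upnu_i^{-1} M \cong M$ will follow by the formally dual argument, exchanging left and right approximations throughout; the central use of the indecomposability of the mutated summand together with the basic-ness of $M$ remains unchanged, and is the only substantial ingredient in the proof.
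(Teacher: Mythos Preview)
Your proposal is correct and follows essentially the same approach as the paper's proof: both reduce to showing that $g$ is a minimal right $\add(M/M_i)$-approximation, both obtain the approximation property from rigidity of $M$ via the long exact sequence, and both deduce minimality by showing that any complementary summand $C$ on which $g$ vanishes would force $M_i \in \add(M/M_i)$, contradicting basicness. The only cosmetic difference is that the paper decomposes the triangle as a direct sum and invokes uniqueness of cocones to get $M_i \cong W' \oplus Z$, whereas you lift $\mathrm{incl}_C$ through $f$ directly to exhibit $C$ as a summand of $M_i$; these are two phrasings of the same step.
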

\vspace{-3.7mm}
\begin{proof} For lack of a reference, we sketch the proof.
To show this, it is enough to show that, if $f_i$ is a minimal left $\add(M/M_i)$-approximation of $M_i$ in the exchange triangle 
\begin{align} 
M_i \xrightarrow{f_i} V_i \xrightarrow{g_i} N_i \to \Sigma M_i,  \label{exchange}
\end{align} 
then $g_i$ is a minimal right $\add(M/M_i)$-approximation of $N_i$ and vice versa. 

This is very similar to \cite[5.7, 5.8]{GLS}: begin by assuming  $f_i$ is a minimal left $\add(M/M_i)$-approximation of $M_i$. Applying $\Hom_{\cC}(M/M_i,-)$ to the exchange triangle \eqref{exchange} and using that $M$ is rigid gives an exact sequence
\begin{align*}
\Hom_{\cC}(M/M_i,V_i) \xrightarrow{g_i \circ -}  \Hom_{\cC}(M/M_i,N_i) \to 0.
\end{align*}
This shows that $g_i$ is a right $\mathrm{add}(M/M_i)$-approximation. There is an isomorphism 
\begin{align*}
\left( V_i \xrightarrow{g_i} N_i \right) \cong \Big( W \oplus Z \xrightarrow{(g',0)} N_i \Big)
\end{align*}
where $g'$ is right minimal and $W, Z \in \add(M/M_i)$. Completing these maps to triangles gives
\vspace{-1.5mm}
\begin{align*}
\left( M_i \xrightarrow{f_i} V_i \xrightarrow{g_i} N_i \to \Sigma M_i \right) \cong 
\left( \Big( W' \xrightarrow{f'} W \xrightarrow{g'} N_i \to \Sigma M_i \Big) \oplus \left( Z \xrightarrow{\id} Z \xrightarrow{0} 0 \to \Sigma Z \right)  \right) 
\end{align*}
and hence by the uniqueness of cocones, $M_i \cong W' \oplus Z$. Since $M_i$ is indecomposable, either $W'$ or $Z$ must be zero. If $W'$ is zero, $M_i \cong Z$ and so $M_i \in \mathrm{add}(Z) \subseteq \mathrm{add}(M/M_i)$ which is a contradiction. Hence, $Z \cong 0$ and so $g_i$ is right minimal, as required. The other direction is a dual argument. 
\end{proof}
For maximal rigid objects $M$ and $N$, it is shown in \cite[3.3]{maxrigid}, generalising \cite[5.3]{yoshino} for cluster-tilting objects, that $M$ is a mutation of $N$ if and only if $M$ and $N$ differ by exactly one indecomposable summand. An easy consequence of this is that left and right mutation must coincide in this case.

For a rigid object $M \in \cC$, we will write $\mut(M)$ for the collection of basic rigid objects which can be obtained from $M$ by iterated left or right mutation. 
The relationship between cluster-tilting theory and silting theory relies on the following subcategory of $\cC$.
\begin{definition}
Given $M \in \cC$, define $M * \Sigma M$ to be the full subcategory of $\cC$ consisting of the objects $N$ such that there exists a triangle
\begin{align}
M_1 \xrightarrow{f} M_0 \xrightarrow{g} N \to \Sigma M_1 \label{tstar}
\end{align}
where $M_1,M_0 \in \mathrm{add}(M)$.
\end{definition}
It is easy to show that if $M$ is rigid, then for any triangle such as \eqref{tstar}, $g$ is a right $\add(M)$-approximation and further, by possibly changing $M_0$ and $M_1$, we can choose $g$ to be minimal. 
The following theorem is a slight generalisation of \cite[4.7]{[AIR]}, similar to that of \cite[3.2]{rigid}. 
\begin{thm}\cite[4.7]{[AIR]} \label{ctbij}
Let $\cC$ be a $k$-linear Hom-finite, Krull-Schmidt, 2-CY triangulated category and $M$ be a rigid object of $\cC$. If $\Lambda \colonequals \End_\cC(M)$, there is a bijection 
\begin{align*}
\rig (M*\Sigma M)  \longleftrightarrow \presilt \Lambda
\end{align*}
which preserves the number of summands. For a rigid object $N$, consider the triangle
\begin{align*}
M_1 \xrightarrow{f} M_0 \xrightarrow{g} N \to \Sigma M_1
\end{align*}
such that $g$ is a minimal right $\mathrm{add}(M)$-approximation. Applying $\Hom_\cC(M,-)$ to $f$ gives the corresponding 2-term presilting complex.
\end{thm}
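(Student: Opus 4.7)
The plan is to mimic the proof of \cite[4.7]{[AIR]}, observing that all essential constructions remain inside the additive subcategory $M*\Sigma M$, so the argument transfers by replacing the ambient category $\cC$ with $M*\Sigma M$. The two maps are explicit: the forward map sends a rigid object $N$ to the two-term complex obtained by applying $\Hom_\cC(M,-)$ to the approximation morphism $f$; the backward map lifts a presilting complex to $\add M$ via the Yoneda-type equivalence $\Hom_\cC(M,-)\colon \add M \xrightarrow{\sim} \proj\Lambda$ and takes a mapping cone in $\cC$.

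For the forward direction, the key task is to verify that $P_N \colonequals (P_1 \xrightarrow{f_*} P_0)$ is a two-term presilting complex, where $P_i \colonequals \Hom_\cC(M, M_i)$. Since $g$ is a right $\add M$-approximation, the induced sequence
\[
P_1 \xrightarrow{f_*} P_0 \to \Hom_\cC(M,N) \to 0
\]
is exact. To verify presiltingness, I would compute $\Hom_{\Kb(\proj \Lambda)}(P_N, P_N[1])$ by applying $\Hom_\cC(M,-)$ to the shifted exchange triangle and interpreting morphism spaces via the Yoneda equivalence; the vanishing then reduces to $\Hom_\cC(N, \Sigma N) = 0$ (using the 2-Calabi-Yau structure to identify the relevant Ext groups), which holds by rigidity of $N$.

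For the inverse direction, given a two-term presilting complex $Q^{-1} \xrightarrow{d} Q^0$ in $\proj \Lambda$, use the Yoneda equivalence to realise $d = \Hom_\cC(M, \tilde d)$ for some $\tilde d\colon M_1 \to M_0$ in $\add M$, and let $N$ be the cone of $\tilde d$, so $N \in M*\Sigma M$ by construction. Rigidity of $N$ is then proved by reversing the forward computation: the presilting vanishing pulls back to $\Hom_\cC(N,\Sigma N) = 0$ through the same identifications. Mutual inverseness follows from the uniqueness of cones and of minimal $\add M$-approximations, provided $\tilde d$ is chosen to be minimal on the inverse side.

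The main obstacle will be carefully managing minimality and tracking the number of indecomposable summands. An indecomposable summand $M_i$ of $M$ that appears as a summand of $N$ corresponds on the silting side to the trivial complex $0 \to \Hom_\cC(M, M_i)$ concentrated in degree zero, while indecomposable summands of $N$ not in $\add M$ correspond to genuine two-term pieces; these cases must be reconciled so that the bijection preserves the total number of summands. Once this bookkeeping is handled using the equivalence $\add M \simeq \proj \Lambda$ and the indecomposability of cones of minimal approximations, the count is automatic.
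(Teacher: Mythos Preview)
Your proposal is correct and takes essentially the same approach as the paper. The paper does not give a proof at all beyond the remark that one replaces the ambient category $\cC$ in the proof of \cite[4.7]{[AIR]} by $M*\Sigma M$ and runs the identical argument; your sketch is precisely an elaboration of that strategy, so there is nothing to add.
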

\begin{proof}
As the proof is so similar to \cite[4.7]{[AIR]}, just replacing $\cC$ with $M*\Sigma M$ throughout, we only give a sketch of the proof and highlight where adaptations are required. 

It is well known (see e.g.\ \cite[2.3]{krause}) that the functor $\Hom_\cC(M,-)$ induces an equivalence $\add(M) \xrightarrow{\sim} \proj \Lambda$. This can be used exactly as in \cite[4.6]{[AIR]} to show that the map described in the statement gives a bijection
\begin{align*}
\{\text{objects in $M*\Sigma M$}\} \quad \longleftrightarrow \quad \{\text{complexes $P_{-1} \to P_0$ in $\Kb(\proj \Lambda)$}\}
\end{align*}
where both sides are taken up to isomorphism. The claim that a rigid object $N \in M*\Sigma M$ is sent to a presilting complex then follows exactly as in \cite[4.7]{[AIR]}, making repeated use of the equivalence $\add(M) \xrightarrow{\sim} \proj \Lambda$. 

Finally, given a two-term presilting complex $P$, this is necessarily of the form
\begin{align*}
\Hom_\cC(M,M_1) \xrightarrow{f \circ -} \Hom_\cC(M,M_0)
\end{align*} 
for some $M_1 \xrightarrow{f} M_0$ in $\add(M)$ and the object in $M *\Sigma M$ associated to $P$ is $N \colonequals \mathrm{cone}(f)$. The proof that $N$ is rigid relies on showing that if a map $M_0 \xrightarrow{p} \Sigma^2M_1$ satisfies $\Hom_\cC(M,p)=0$, then $p=0$. In \cite{[AIR]} they use an equivalence $\cC/[\Sigma M] \xrightarrow{\sim} \mod \Lambda$ which holds for any cluster-tilting object $M$, and which was generalised in \cite[2.1]{rigid}  to an equivalence $(M*\Sigma M)/[\Sigma M] \xrightarrow{\sim} \mod \Lambda$ for any rigid object $M$. However, this is not directly applicable here as it is unclear that $\Sigma^2M_1 \in M*\Sigma M$. Instead, we can appeal to the general result (see e.g.\ \cite[VI.3.1]{ASS}) that for any $M' \in \add(M)$ and any $N \in \cC$, the functor $\Hom_\cC(M,-)$ induces an isomorphism
\begin{align*}
\Hom_\cC(M',N) \cong \Hom_\Lambda(\Hom_\cC(M,M'), \Hom_\cC(M,N)). 
\end{align*} 
The proof then proceeds exactly as stated in \cite[4.7]{[AIR]}.
\end{proof}
\begin{remark} \label{maxrigremark}
\begin{enumerate}[leftmargin=0cm,itemindent=.6cm,labelwidth=\itemindent,labelsep=0cm,align=left]
\item Since the bijection preserves the number of summands, rigid objects in $M*\Sigma M$ with the same number of summands as $M$ must correspond to silting complexes. 
\item If $M$ is maximal rigid, it is shown in \cite[2.5]{maxrigid} that all rigid objects of $\cC$ lie in $M*\Sigma M$ and so this bijection restricts to 
\begin{align*}
\mrig \cC \longleftrightarrow \twosilt \Lambda.
\end{align*}
Further, the bijection respects mutation in this case as mutation on both sides corresponds to differing by exactly one indecomposable summand.
\item If $\cC$ has only finitely many maximal rigid objects, Theorem \ref{ctbij} can be used along with \cite[2.38, 3.9]{[AIR]}, to show that, for any maximal rigid object $M$, $\mut(M) = \mrig \cC$. This is exactly as in \cite[4.9]{[AIR]}, but where we replace cluster-tilting with maximal rigid.
\end{enumerate}
\end{remark}
Theorem \ref{ctbij} can also be used to show the following, which will show that if there are only finitely many basic maximal rigid objects in some category, there must also be only finitely many basic rigid objects.
\begin{lemma} \label{rigsummand}
Let $\cC$ be a $k$-linear Hom-finite, Krull-Schmidt, 2-CY triangulated category. If there exists a maximal rigid object in $\cC$, then any basic rigid object $M \in \cC$ is a direct summand of some basic maximal rigid object. 
\end{lemma}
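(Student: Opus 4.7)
The plan is to translate the problem into silting theory via Theorem~\ref{ctbij}. Fix a basic maximal rigid object $N \in \cC$, which exists by hypothesis, and set $\Lambda \colonequals \End_\cC(N)$.

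First, since $N$ is maximal rigid, Remark~\ref{maxrigremark}(2) (following \cite[2.5]{maxrigid}) guarantees that every rigid object of $\cC$ lies in the subcategory $N * \Sigma N$; in particular the given basic rigid object $M$ does. Applying Theorem~\ref{ctbij} to $M$ then produces a corresponding basic 2-term presilting complex $P \in \presilt \Lambda$ over $\Lambda$.

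Next, I would appeal to the 2-term analogue of Bongartz completion on the silting side: every basic 2-term presilting complex over a finite dimensional algebra is a direct summand of some basic 2-term silting complex. This is a standard consequence of the support $\tau$-tilting theory developed in \cite{[AIR]}, via the bijection there between 2-term silting complexes and support $\tau$-tilting pairs. Thus $P$ is a summand of some $T \in \twosilt \Lambda$. Under the restricted bijection $\mrig \cC \leftrightarrow \twosilt \Lambda$ supplied by Remark~\ref{maxrigremark}(2), this $T$ corresponds to some basic maximal rigid object $M' \in \cC$.

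It then remains to pull the summand relation back through the bijection. Because the correspondence of Theorem~\ref{ctbij} is constructed by taking a minimal right $\add N$-approximation of the rigid object and then applying $\Hom_\cC(N,-)$, a direct sum decomposition on one side is induced by (and induces) a direct sum decomposition on the other: from $T \cong P \oplus P'$ one recovers a decomposition $M' \cong M \oplus M''$, exhibiting $M$ as a summand of the basic maximal rigid object $M'$. The main obstacle is invoking the 2-term Bongartz completion cleanly, since that is where the essential content lies; once it is in hand, the transfer between rigid objects and 2-term presilting complexes is essentially formal from Theorem~\ref{ctbij} and Remark~\ref{maxrigremark}(2).
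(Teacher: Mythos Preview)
Your argument is correct and follows essentially the same route as the paper: pass to $\presilt\Lambda$ via Theorem~\ref{ctbij}, complete to a two-term silting complex, and pull back along the bijection of Remark~\ref{maxrigremark}(2). The only difference is cosmetic: the paper cites \cite[3.1]{siltingtheorem} for the Bongartz-type completion rather than deducing it from \cite{[AIR]}, and it treats the compatibility of the bijection with direct sums as evident rather than spelling it out.
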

\begin{proof}
Let $N \in \cC$ be a basic maximal rigid object and write $\Lambda \colonequals \End_\cC(N)$. Let $\phi$ denote the corresponding bijection from Theorem \ref{ctbij}. Now take any basic rigid object $M \in \cC$. By \cite[2.5]{maxrigid}, $M$ is contained in $\rig (N*\Sigma N)$ and thus $\phi(M) \in \presilt \Lambda$. Hence, by \cite[3.1]{siltingtheorem}, there exists $P \in \presilt \Lambda$ such that $\phi(M) \oplus P$ is a two-term silting complex for $\Lambda$. Mapping back across the bijection, and using Remark \ref{maxrigremark}(2), shows that $M \oplus \phi^{-1}(P)$ is maximal rigid object which gives the result.
\end{proof}


\section{The Derived Equivalence Class}
\subsection{Main Results}
The following setup will be used throughout this section.
 \begin{setup} \label{setup2}
Suppose $\cE$ is a Frobenius category such that its stable category $\ucE$ is a $k$-linear, Hom-finite, Krull-Schmidt, 2-CY triangulated category with shift functor $\Sigma$. Additionally assume that:
\begin{itemize}
\item $\Sigma^2$ is isomorphic to the identity functor on $\ucE$.
\item $\ucE$ has at least one but only finitely many basic maximal rigid objects.
\end{itemize} 
 \end{setup}
 \begin{remark} \label{finiterigid}\begin{enumerate}[leftmargin=0cm,itemindent=.6cm,labelwidth=\itemindent,labelsep=0cm,align=left]
\item The first additional assumption means $\ucE$ is a $0$-Calabi-Yau category, but as we wish to view $\ucE$ as a category with cluster-tilting theory plus some additional conditions, we write the assumptions as above.
\item By Lemma \ref{rigsummand}, the second assumption further implies that $\ucE$ has only finitely many basic rigid objects. 
\end{enumerate}
 \end{remark}

  A large source of examples of this setup will come from a geometric setting, described in \S \ref{geometricapplication}. For an infinite family of examples, see \S \ref{geometricexample}. 
  
The first additional assumption gives two important results, the first of which is the following. This was first proved in the setting of hypersurface singularities, but the proof works generally.

\begin{proposition} \cite[7.1]{symmetric} \label{symmalg}
Let $\cC$ be a $k$-linear, Hom-finite, 2-CY triangulated category with shift functor $\Sigma$ such that $\Sigma^2 \cong \mathrm{id}$. Then for any $N \in \cC$, the algebra $\Lambda \colonequals\End_{\cC}(N)$ is a symmetric algebra i.e. $\Lambda \cong D\Lambda$ as bimodules.
\end{proposition}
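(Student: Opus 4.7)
The plan is to chain together two bifunctorial isomorphisms of $\Lambda$-bimodules: the $2$-Calabi-Yau pairing and a natural isomorphism $\Sigma^2 \cong \mathrm{id}$, to conclude $\Lambda \cong D\Lambda$ as $\Lambda$-bimodules.

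First I would specialise the $2$-CY pairing to $M = N$, obtaining a $k$-linear isomorphism $\Hom_\cC(N, N[2]) \cong D\Hom_\cC(N, N) = D\Lambda$, and upgrade this to a $\Lambda$-bimodule isomorphism. Equip $\Hom_\cC(N, N[2])$ with the structure in which $\lambda \in \Lambda$ acts on the left by post-composition with $\lambda[2]$ and on the right by pre-composition with $\lambda$. Bifunctoriality of the $2$-CY pairing says precisely that, under the isomorphism, each of these actions corresponds to the standard $\Lambda$-bimodule structure on $D\Lambda$ given by $(\lambda \cdot \phi \cdot \mu)(x) = \phi(\mu x \lambda)$. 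This reduces to a direct computation using naturality of the pairing in each argument separately: naturality in the first variable translates pre-composition by $\lambda$ on $\Hom_\cC(N, N[2])$ into the rule $\phi \mapsto \phi(\lambda \cdot -)$ on $D\Lambda$, and naturality in the second variable does the analogous thing for post-composition by $\lambda[2]$.

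Next, I would fix a natural isomorphism $\eta \colon \mathrm{id} \xrightarrow{\sim} \Sigma^2$ of triangulated endofunctors, supplied by the hypothesis $\Sigma^2 \cong \mathrm{id}$. Post-composition with $\eta_N$ defines a $k$-linear isomorphism $\Lambda = \Hom_\cC(N, N) \to \Hom_\cC(N, N[2])$. Naturality of $\eta$ at any $\lambda \in \Lambda$ gives the relation $\eta_N \circ \lambda = \lambda[2] \circ \eta_N$, which translates the regular left $\Lambda$-action on $\Lambda$ to post-composition by $\lambda[2]$ on $\Hom_\cC(N, N[2])$, while the right actions match tautologically. Thus this map is also a $\Lambda$-bimodule isomorphism, and composing with the first step yields the required $\Lambda \cong D\Lambda$ as bimodules.

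The main obstacle is bookkeeping of bimodule structures: the $k$-linear isomorphism at each step is immediate, and the content is simply that bifunctoriality of the $2$-CY pairing together with naturality of $\eta$ formally encode exactly the compatibility needed with the left and right $\Lambda$-actions. This is routine once unpacked, but has to be verified carefully, since the conclusion $\Lambda \cong D\Lambda$ \emph{as bimodules} is strictly stronger than a mere vector-space isomorphism.
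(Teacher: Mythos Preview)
Your proof is correct and follows essentially the same approach as the paper: both arguments use that bifunctoriality of the 2-CY isomorphism makes it a $\Lambda$-bimodule map, and then invoke a natural isomorphism $\Sigma^2 \cong \mathrm{id}$ to identify $\Hom_\cC(N,\Sigma^2 N)$ with $\Lambda$. The paper's version is more compressed, but your careful unpacking of the bimodule compatibility (via naturality of the 2-CY pairing in each variable and naturality of $\eta$) is exactly the content being asserted there.
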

\begin{proof}
For any $M,N \in \cC$, $\Hom_{\cC}(M,N)$ has the structure of an $\End_{\cC}(M)$-$\End_{\cC}(N)$-bimodule. Further, the bifunctoriality of the isomorphisms
\begin{align*}
\Hom_{\cC}(M,N) \cong D\Hom_{\cC}(N,\Sigma^2 M)
\end{align*}
coming from the 2-CY property ensure they are isomorphisms of $\End_{\cC}(M)$-$\End_{\cC}(N)$-bimodules. Taking $M=N$ and using that $\Sigma^2 \cong \mathrm{id}$ gives the desired result.
\end{proof}
The key advantage of this result is that it means any silting complex for these endomorphism algebras will in fact be a tilting complex. In particular, the left or right mutation of any tilting complex at any summand, as in Definition \ref{mutdef}, will again be a tilting complex. This leads to the second important consequence of our assumption on $\Sigma$.

\begin{proposition}\label{derivedequiv}
With the setup of \ref{setup2}, let $M \colonequals \bigoplus_{i=1}^n M_i$ be a rigid object of $\ucE$ and write $\Lambda:=\uEnd_\cE(M)$. Mutate $M$ at the summand $M_i$ via the exchange sequence \eqref{exchange} and consider the two-term complex
\begin{align*}
P \colonequals \big( 0 \to \bigoplus_{j \neq i} \uHom_{\cE}(M,M_j) \big) \oplus \big( \uHom_{\cE}(M,M_i) \xrightarrow{f_i \circ -} \uHom_{\cE}(M,V_i) \big).
\end{align*}
Then $P$ is isomorphic to the tilting complex $\upmu_i \Lambda$ and there is a ring isomorphism 
\begin{align*}
\End_\Lambda(P) \cong \uEnd_\cE(\upnu_i M).
\end{align*} 
\end{proposition}
\begin{proof}
The fact that $P$ is a tilting complex and has the required endomorphism ring will follow directly from \cite[4.1]{Dugas} if we can show that the conditions there hold. In particular, we need to show that, for any $j \in \mathbb{Z}$, the maps
\begin{align*}
\uHom_\cE(V_i, \Sigma^j(M/M_i)) \xrightarrow{- \circ f_i} \uHom_\cE(M_i, \Sigma^j(M/M_i)) 
\end{align*}
and 
\begin{align*}
\uHom_\cE(\Sigma^j(M/M_i), V_i) \xrightarrow{g_i \circ -} \uHom_\cE(\Sigma^j (M/M_i), N_i)
\end{align*} 
are surjective. Using the assumption $\Sigma^2 \cong \id$, we need only consider $j=0,1$. When $j=0$ this follows as both $f_i$ and $g_i$ are $\add(M/M_i)$-approximations ($f_i$ by definition and $g_i$ by Lemma \ref{rightleftmut}). When $j=1$, rigidity of $M$ and $\upnu_iM$ show that all the terms are zero and hence the maps are surjective as required. Finally, we see that $P$ must be $\upmu_i \Lambda$ by Proposition \ref{twotermmutation}, as $P$ is a two-term silting complex differing from $\Lambda$ by precisely the $i^{th}$ summand.
\end{proof}

Combining Proposition \ref{derivedequiv} and Theorem \ref{rickards} shows that, for any basic rigid object $M$, there is a derived equivalence
\begin{align*}
\Db(\uEnd_\cE(M)) &\to \Db(\uEnd_\cE(\upnu_i M)),\\
\upmu_i \uEnd_\cE(M) &\mapsto \uEnd_\cE(\upnu_i M).
\end{align*}
By iterating this result, it is easy to see that the stable endomorphism algebras of the elements in the set $\mut(M)$ are all derived equivalent algebras. 

Using the above two results allows us to mutate freely and to investigate how mutation of rigid objects and mutation of tilting complexes interact. Under the setup of \ref{setup2}, fix a basic rigid object $M \colonequals \bigoplus_{i=1}^n M_i\in \ucE$ and let $\Lambda \colonequals \uEnd_\cE(M)$. Note that each sequence of mutations from $M$ produces a rigid obect $N \in \mut(M)$ along with an induced ordering of the summands of $N$. To record this data, we introduce the following directed graph which we denote $\umut(M)$: 
\begin{itemize}
\item the vertices are ordered tuples $\overline{N}=(N_1, \dots, N_n)$, where $N \colonequals \bigoplus_{i=1}^n N_i \in \mut(M)$ and the order of the summands is induced by some sequence of mutations from $M$. Two tuples $\overline{N}$ and $\overline{N}'$ are identified precisely when $N_i \cong N_i'$ for each $i \in \{1, \dots, n\}$;
\item there is an arrow $s_i \colon \overline{N} \to \overline{N}'$ if $N$ and $N'$ differ in precisely the $i^{th}$ summand, and $N' \cong \upnu_iN$. 
\end{itemize}

Note that each vertex has $n$ arrows $s_1, \dots, s_n$ starting at the vertex (corresponding to left mutations) and $n$ arrows $s_1, \dots, s_n$ incident at the vertex (corresponding to right mutations). 

Note that if $M$ and $N$ are related by mutation, then $\umut(M)$ and $\umut(N)$ will be isomorphic graphs. In particular, for any maximal rigid object, the mutation graph will be the same and so we just denote this $\umrig(\cE)$.

The idea behind the results in this paper is that combinatorial paths in $\umut(M)$ will control not only mutation of rigid objects but also tilting complexes of $\Lambda$.

\begin{definition}
A \emph{path} in $\umut(M)$ is a symbol $s_{i_m}^{\upvarepsilon_{m}} \dots s_{i_1}^{\upvarepsilon_{1}}$ with $i_1, \dots, i_m \in \{1, \dots n\}$ and $\upvarepsilon_{i} \in \{-1,1\}$, along with a specified starting vertex $\overline{N}$. The path $s_i$ starting at $\overline{N}$ should be thought of as the path travelling along arrow $s_i$ from vertex $\overline{N}$ and the path $s_i^{-1}$ should be thought of travelling backwards along the arrow $s_i$ incident to $\overline{N}$. Longer paths are composed right to left as with function composition. A path is called \emph{positive} if all the $\upvarepsilon_i$ equal $1$.
\end{definition}

\begin{notation}
When the ordering is clear from the context, such as when $N$ is defined via a sequence of mutations from our fixed $M$, we will abuse notation and drop the overline notation for the corresponding vertex of $\umut(M)$. Moreover, ``a path starting at $M$" will always refer to a path starting from $\overline{M}=(M_1, \dots, M_n)$, where $M\colonequals \bigoplus_{i=1}^n M_i$ is the fixed ordering used to define $\umut(M)$.
\end{notation}

\begin{example} \label{ex1}
In the geometric setting introduced later, there exists an example of $\cE$ where $\umrig(\cE)$ is the figure on the left below; we write $M_{{i_n} \dots {i_1}} \colonequals \upnu_{i_n} \dots \upnu_{i_1}M$ to ease notation and note that, in this example, $M_{2121} \cong M_{1212}$ and the induced orderings are the same. The path $\upalpha \colonequals s_1s_1s_2s_1^{-1}$ starting at vertex $M_1$ is shown in the diagram on the right, where you travel along the first edge in the opposite orientation.
\[
\begin{tikzpicture}[scale=1.3,bend angle=15, looseness=1,>=stealth]
\node (C+) at (45:1.5cm) [] {$ \scriptstyle M$};
\node (C1) at (90:1.5cm) [] {$ \scriptstyle M_1$};
\node (C2) at (135:1.5cm) [] {$ \scriptstyle M_{21}$};
\node (C3) at (180:1.5cm) [] {$ \scriptstyle M_{121}$};
\node (C-) at (225:1.5cm) [] {$\scriptstyle M_{2121}$};
\node (C4) at (270:1.5cm) [] {$\scriptstyle M_{212}$};
\node (C5) at (315:1.5cm) [] {$\scriptstyle M_{12}$};
\node (C6) at (0:1.5cm) [] {$\scriptstyle M_{2}$};
\draw[->, bend right]  (C+) to (C1);
\draw[->, bend right]  (C1) to (C+);
\draw[->, bend right]  (C1) to (C2);
\draw[->, bend right]  (C2) to (C1);
\draw[->, bend right]  (C2) to (C3);
\draw[->, bend right]  (C3) to (C2);
\draw[->, bend right]  (C3) to (C-);
\draw[->, bend right]  (C-) to  (C3);
\draw[<-, bend right]  (C+) to  (C6);
\draw[<-, bend right]  (C6) to  (C+);
\draw[<-, bend right]  (C6) to  (C5);
\draw[<-, bend right]  (C5) to (C6);
\draw[<-, bend right]  (C5) to  (C4);
\draw[<-, bend right]  (C4) to (C5);
\draw[<-, bend right]  (C4) to  (C-);
\draw[<-, bend right]  (C-) to (C4);
\node at (67.5:1.1cm) {$\scriptstyle s_1$};
\node at (67.5:1.65cm) {$\scriptstyle s_1$};
\node at (112.5:1.1cm) {$\scriptstyle s_2$};
\node at (112.5:1.65cm) {$\scriptstyle s_2$};
\node at (157.5:1.1cm) {$\scriptstyle s_1$};
\node at (157.5:1.65cm) {$\scriptstyle s_1$};
\node at (202.5:1.1cm) {$\scriptstyle s_2$};
\node at (202.5:1.65cm) {$\scriptstyle s_2$};
\node at (247.5:1.1cm) {$\scriptstyle s_1$};
\node at (247.5:1.65cm) {$\scriptstyle s_1$};
\node at (292.5:1.1cm) {$\scriptstyle s_2$};
\node at (292.5:1.65cm) {$\scriptstyle s_2$};
\node at (337.5:1.1cm) {$\scriptstyle s_1$};
\node at (337.5:1.65cm) {$\scriptstyle s_1$};
\node at (382.5:1.1cm) {$\scriptstyle s_2$};
\node at (382.5:1.65cm) {$\scriptstyle s_2$};
\end{tikzpicture}
\hspace{1cm}
\begin{tikzpicture}[scale=1.3,bend angle=15, looseness=1,>=stealth]
\node (C+) at (45:1.5cm) [] {$ \scriptstyle M$};
\node (C1) at (90:1.5cm) [] {$ \scriptstyle M_1$};
\node (C2) at (135:1.5cm) [] {$ \scriptstyle M_{21}$};
\node (C3) at (180:1.5cm) [] {$ \scriptstyle M_{121}$};
\node (C-) at (225:1.5cm) [] {$\scriptstyle M_{2121}$};
\node (C4) at (270:1.5cm) [] {$\scriptstyle M_{212}$};
\node (C5) at (315:1.5cm) [] {$\scriptstyle M_{12}$};
\node (C6) at (0:1.5cm) [] {$\scriptstyle M_{2}$};
\draw[, ->, bend right]  (C+) to (C1);
\draw[black!30, ->, bend right]  (C1) to (C+);
\draw[black!30, ->, bend right]  (C1) to (C2);
\draw[black!30, ->, bend right]  (C2) to (C1);
\draw[black!30, ->, bend right]  (C2) to (C3);
\draw[black!30, ->, bend right]  (C3) to (C2);
\draw[black!30, ->, bend right]  (C3) to (C-);
\draw[black!30, ->, bend right]  (C-) to  (C3);
\draw[black!30, <-, bend right]  (C+) to  (C6);
\draw[ <-, bend right]  (C6) to  (C+);
\draw[ <-, bend right]  (C6) to  (C5);
\draw[ <-, bend right]  (C5) to (C6);
\draw[black!30, <-, bend right]  (C5) to  (C4);
\draw[black!30, <-, bend right]  (C4) to (C5);
\draw[black!30, <-, bend right]  (C4) to  (C-);
\draw[black!30, <-, bend right]  (C-) to (C4);
\node at (67.5:1.65cm) {$\scriptstyle 1$};
\node at (337.5:1.1cm) {$\scriptstyle 4$};
\node at (337.5:1.65cm) {$\scriptstyle 3$};
\node at (382.5:1.65cm) {$\scriptstyle 2$};
\end{tikzpicture}
\]
\end{example}

Not only will paths in $\umut(M)$ correspond to mutation sequences, but also to derived equivalences between the algebras of interest.  Combining Proposition \ref{derivedequiv} with Theorem \ref{rickards}, for each arrow $s_i \colon \overline{N} \to \overline{N}'$ in $\umut(M)$ there exists a derived equivalence
\begin{align}
F_i \colon \Db(\uEnd_\cE(N)) &\to \Db(\uEnd_\cE(N')) \label{deffunc} \\ 
\upmu_i (\uEnd_\cE(N)) &\mapsto \uEnd_\cE(N') \nonumber
\end{align}
where, giving $\uEnd_\cE(N)$ the induced ordering $\uHom_\cE(N,N_1), \dots, \uHom_\cE(N, N_n)$ from $\overline{N}$, the $j^{th}$ summand of $\upmu_i (\uEnd_\cE(N))$ maps to $\uHom_\cE(N', N'_j)$. Fix such a derived equivalence $F_i$ for each arrow $s_i$. For example, we could choose $F_i$ to be the unique (up to algebra isomorphism) standard derived equivalence associated to $\upmu_i (\uEnd_\cE(N))$, as discussed in \S\ref{derivedprelim}.
\begin{notation} \label{notation}
Consider a path $\upalpha \colonequals s_{i_m}^{\upvarepsilon_{m}} \dots s_{i_1}^{\upvarepsilon_{1}}$ in $\umut(M)$ starting at a vertex $\overline{N}$. Then, writing $\Gamma \colonequals \uEnd_\cE(N)$ set:
\begin{enumerate}
\item $\upnu_{\upalpha}N \colonequals \upnu_{i_m}^{\upvarepsilon_m} \dots \upnu_{i_1}^{\upvarepsilon_i}N$ and $\upmu_{\upalpha}\Gamma \colonequals \upmu^{\upvarepsilon_{m}}_{i_m}  \dots \upmu^{\upvarepsilon_{1}}_{i_1} \Gamma$.
\item $F_{\upalpha} \colonequals F^{\upvarepsilon_{m}}_{i_m} \circ \dots \circ F^{\upvarepsilon_{1}}_{i_1} \colon \Db(\Gamma) \to \Db(\uEnd_\cE(\upnu_\upalpha N))$.
\end{enumerate}
\end{notation}
\begin{example}
Consider the path $\upalpha \colonequals s_1s_1s_2s_1^{-1}$ from Example \ref{ex1}. Using that right and left mutation are equal for maximal rigid objects, the object at the end of the path is 
\begin{align*}
\upnu_\upalpha M_1 &\colonequals \upnu_1 \upnu_1 \upnu_2 \upnu_1^{-1} M_1\\ &\cong \upnu_1 \upnu_1 \upnu_2 \upnu_1^{-1} \upnu_1 M \\
&\cong \upnu_2 \upnu_1^{-1} \upnu_1 M\\
&\cong M_{2}
\end{align*}
as in the diagram. Similarly, but with no cancellation now as left and right mutation are different,
\vspace{-0.1cm}
\begin{align*}
\upmu_\upalpha \uEnd_\cE(M_1)  &\colonequals \upmu_1 \upmu_1 \upmu_2 \upmu_1^{-1} \uEnd_\cE(M_1).
\end{align*}
\end{example}
The following is the main technical result of this section. 
\begin{proposition} \label{mutationpaths}
Under the setup of \ref{setup2}, choose a rigid object $M \colonequals \bigoplus_{i=1}^n M_i$ in $ \ucE$ and let $\upalpha \colonequals s_{i_m}^{\upvarepsilon_{m}} \dots s_{i_1}^{\upvarepsilon_{1}}$ be a path in $\umut(M)$ starting at $\overline{N}$. Writing $\Lambda \colonequals \uEnd_\cE(N)$ and $\Gamma \colonequals \uEnd_\cE(\upnu_{\upalpha}N)$, the following hold.
\begin{enumerate}
\item $F_\upalpha( \upmu_\upalpha \Lambda) \cong \Gamma$ in $\Db(\Gamma)$.
\item $\End_{\Lambda}(\upmu_\upalpha \Lambda) \cong \Gamma$ as $k$-algebras.
\end{enumerate}
\end{proposition}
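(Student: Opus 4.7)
The plan is to prove both parts simultaneously by induction on the length $m$ of the path $\upalpha$, using Lemma~\ref{tracking} to commute mutation past the derived equivalence $F_\upalpha$. For the base case $m = 1$, write $\upalpha = s_{i_1}^{\upvarepsilon_1}$. If $\upvarepsilon_1 = 1$, part (1) is precisely the defining property of $F_{i_1}$ in \eqref{deffunc}, and part (2) is Theorem~\ref{derivedequiv}. If $\upvarepsilon_1 = -1$, then $\Gamma = \uEnd_\cE(\upnu_{i_1}^{-1}N)$ and $F_{i_1}\colon \Db(\Gamma) \to \Db(\Lambda)$ sends $\upmu_{i_1}\Gamma$ to $\Lambda$. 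Lemma~\ref{tracking}(3) applied to $F_{i_1}$ gives
\begin{align*}
F_{i_1}(\Gamma) \cong F_{i_1}(\upmu_{i_1}^{-1}\upmu_{i_1}\Gamma) \cong \upmu_{i_1}^{-1}F_{i_1}(\upmu_{i_1}\Gamma) \cong \upmu_{i_1}^{-1}\Lambda,
\end{align*}
so $F_{i_1}^{-1}(\upmu_{i_1}^{-1}\Lambda) \cong \Gamma$, and (2) again follows from Theorem~\ref{derivedequiv} applied at the vertex $\upnu_{i_1}^{-1}N$.

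For the inductive step, factor $\upalpha = s_{i_m}^{\upvarepsilon_m}\beta$, set $N' \colonequals \upnu_\beta N$ and $\Lambda' \colonequals \uEnd_\cE(N')$, and apply the inductive hypothesis to $\beta$ to obtain $F_\beta(\upmu_\beta\Lambda) \cong \Lambda'$. Lemma~\ref{tracking}(3) applied to $F_\beta$ and the silting complex $\upmu_\beta\Lambda$ yields
\begin{align*}
F_\beta\bigl(\upmu_{i_m}^{\upvarepsilon_m}\upmu_\beta\Lambda\bigr) \cong \upmu_{i_m}^{\upvarepsilon_m}F_\beta(\upmu_\beta\Lambda) \cong \upmu_{i_m}^{\upvarepsilon_m}\Lambda',
\end{align*}
and composing with $F_{i_m}^{\upvarepsilon_m}$, which by the base case applied at the vertex $N'$ sends $\upmu_{i_m}^{\upvarepsilon_m}\Lambda'$ to $\uEnd_\cE(\upnu_{i_m}^{\upvarepsilon_m}N') = \Gamma$, gives
\begin{align*}
F_\upalpha(\upmu_\upalpha\Lambda) \cong F_{i_m}^{\upvarepsilon_m}\bigl(\upmu_{i_m}^{\upvarepsilon_m}\Lambda'\bigr) \cong \Gamma,
\end{align*}
establishing (1). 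Part (2) then drops out for free, since any derived equivalence induces an isomorphism on endomorphism rings:
\begin{align*}
\End_\Lambda(\upmu_\upalpha\Lambda) \cong \End_{\Db(\Gamma)}\bigl(F_\upalpha(\upmu_\upalpha\Lambda)\bigr) \cong \End_\Gamma(\Gamma) \cong \Gamma.
\end{align*}

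The main obstacle is essentially bookkeeping: the signs $\upvarepsilon_i$ must be matched consistently on the rigid and silting sides of $F_\upalpha$, and one must verify that the intermediate complex $\upmu_\beta\Lambda$ genuinely lies in $\silt\Lambda$ before invoking Lemma~\ref{tracking}(3). The latter is where Proposition~\ref{symmalg} does real work: it makes $\Lambda$ symmetric, so silting and tilting coincide and irreducible mutation at any summand is always defined, allowing the induction to iterate freely without stepping outside $\tilt\Lambda$. Beyond this, the technical content is entirely packaged into Lemma~\ref{tracking}, and the argument reduces to the diagram chase above.
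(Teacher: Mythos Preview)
Your proof is correct and follows essentially the same approach as the paper: induction on the length $m$ of the path, with the base case split on the sign $\upvarepsilon_1$ and handled via the defining property of $F_i$ together with Lemma~\ref{tracking}, and the inductive step reducing to the base case via the same commutation of $\upmu_{i_m}^{\upvarepsilon_m}$ past $F_\upbeta$. The paper likewise observes that (2) is immediate from (1) since $F_\upalpha$ is an equivalence, and your remark about Proposition~\ref{symmalg} guaranteeing that iterated mutation stays in $\tilt\Lambda$ mirrors the discussion preceding the proposition.
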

\begin{proof}
Note that $(2)$ follows easily from part $(1)$ since $F_\upalpha$ is an equivalence. We will prove part $(1)$ by induction on $m$. 

\textbf{Base Case $\mathbf{m=1}$:} If $\upalpha \colonequals s_i$, then the result follows from our choice of the $F_i$. If $\upalpha \colonequals s_i^{-1}$ for $s_i \colon  \upnu_i^{-1} N \to N$, then the assumption given on $F_i$ is that
\begin{align} \label{derived}
F_i( \upmu_i \uEnd_\cE(\upnu_i^{-1}N)) = \Lambda.
\end{align}
But then,
\begin{align*}
F_i^{-1}(\upmu_i^{-1} \Lambda) &\cong \upmu_i^{-1}F_i^{-1}( \Lambda) \tag{by Lemma \ref{tracking}}\\
&\cong \upmu_i^{-1}\left( \upmu_i \uEnd_\cE(\upnu_i^{-1}N) \right) \tag{by \eqref{derived}}\\
&\cong \uEnd_\cE(\upnu_i^{-1}N) ,
\end{align*}
as required.

\textbf{Inductive Step:} Let $\upbeta \colonequals s_{i_{m-1}}^{\upvarepsilon_{{m-1}}} \dots s^{\upvarepsilon_{1}}_{i_1} $ so by the inductive hypothesis, $F_\upbeta(\upmu_\upbeta (\Lambda)) \cong \uEnd_\cE(\upnu_\upbeta N)$. Then,
\begin{align*}
F_\upalpha \big(\upmu_\upalpha (\Lambda)\big) &\cong F^{\upvarepsilon_{m}}_{i_m}\big( F_\upbeta \big(\upmu_{i_{m}}^{\upvarepsilon_{m}} \upmu_\upbeta (\Lambda)\big) \big) \\
&\cong F^{\upvarepsilon_{m}}_{i_m}\big(\upmu_{i_m}^{\upvarepsilon_{m}} F_\upbeta \big(\upmu_\upbeta (\Lambda)\big) \big) \tag{by Lemma \ref{tracking}}\\
&\cong  F^{\upvarepsilon_{m}}_{i_m}\big(\upmu_{i_m}^{\upvarepsilon_{m}} \uEnd_\cE(\upnu_\upbeta N) \big) \tag{using inductive hypothesis}\\
&\cong \Gamma
\end{align*}
where the last isomorphism holds by applying the base case to the path $s_{i_m}^{\upvarepsilon_m}$ from $\upnu_\upbeta N$ to  $\upnu_\upalpha N$. 
\end{proof}
This has the following easy corollary.
\begin{cor}
Under the setup of \ref{setup2}, let $M \colonequals \bigoplus_{i=1}^n M_i$ be a rigid object of $\ucE$ and $\Lambda \colonequals \uEnd_\cE(M)$. Then, any tilting complex obtained from $\Lambda$ by finite iterated mutation (either left or right at each stage) has endomorphism algebra isomorphic to one of
\begin{align*}
\{ \uEnd_\cE(N) \mid N \in \mut(M) \}.
\end{align*}
\end{cor}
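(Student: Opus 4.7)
The plan is to reduce the corollary immediately to Proposition \ref{mutationpaths}(2); the work is essentially just translating the iterated mutation into the path notation set up in \ref{notation}. First, I would note that, by Proposition \ref{symmalg}, the algebra $\Lambda = \uEnd_\cE(M)$ is symmetric, and so every silting complex over $\Lambda$ is in fact tilting. Consequently, each step of an iterated (left or right) mutation starting at $\Lambda$ produces another tilting complex, and the iterated mutation in the statement is well-defined.

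Given such a tilting complex $T$, write it as
\[
T = \upmu_{i_m}^{\upvarepsilon_m}\upmu_{i_{m-1}}^{\upvarepsilon_{m-1}}\cdots \upmu_{i_1}^{\upvarepsilon_1}\Lambda
\]
for some $i_j \in \{1,\dots,n\}$ and $\upvarepsilon_j \in \{-1,1\}$. Assemble the corresponding symbol $\upalpha \colonequals s_{i_m}^{\upvarepsilon_m}\cdots s_{i_1}^{\upvarepsilon_1}$ and read it as a path in $X_M$ starting at $M$. Because every vertex of $X_M$ has outgoing arrows $s_1,\dots,s_n$ (corresponding to left mutations) and incoming arrows $s_1,\dots,s_n$ (corresponding to right mutations), such a symbol can always be realised as an honest path from $M$ through $\mut(M)$, so this is legitimate.

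By the definitions in Notation \ref{notation} we have $\upmu_\upalpha \Lambda = T$ and $\upnu_\upalpha M \in \mut(M)$. Applying Proposition \ref{mutationpaths}(2) with $N = M$ then gives the ring isomorphism
\[
\End_\Lambda(T) = \End_\Lambda(\upmu_\upalpha \Lambda) \cong \uEnd_\cE(\upnu_\upalpha M),
\]
and the right-hand side lies in $\{\uEnd_\cE(N) \mid N \in \mut(M)\}$, which is exactly the conclusion.

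There is no serious obstacle here: the genuine content lives entirely inside Proposition \ref{mutationpaths}. The only thing I would want to be careful about is the bookkeeping of indecomposable summands. When one mutates a tilting complex at its $i$-th summand in $\Kb(\proj \Lambda)$, the resulting complex shares all but the $i$-th summand with the original, and this matches the convention, transported through the bijection of Theorem \ref{ctbij}, by which the summands of $\upnu_i N$ are labelled relative to those of $N$. Once this labelling is pinned down, the proof is a one-line invocation of Proposition \ref{mutationpaths}(2).
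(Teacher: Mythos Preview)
Your proof is correct and follows essentially the same route as the paper: write the iterated mutation as $\upmu_\upalpha\Lambda$ for the obvious path $\upalpha$ in $X_M$ starting at $M$, then invoke Proposition \ref{mutationpaths}(2). The additional remarks you include (symmetry of $\Lambda$ via Proposition \ref{symmalg}, and the labelling bookkeeping) are sound but not strictly needed for the argument, which the paper presents in one line.
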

\begin{proof}
If $T$ is a tilting complex for $\Lambda$ obtained by iterated mutation then 
\begin{align*}
T \cong  \upmu^{\upvarepsilon_{m}}_{i_m}  \dots \upmu^{\upvarepsilon_{1}}_{i_1} \Lambda
\end{align*}
for some $i_1, \dots, i_m \in \{1, \dots n\}$ and $\upvarepsilon_{i} \in \{-1,1\}$. This defines a path $\upalpha \colonequals s^{\upvarepsilon_{m}}_{i_m}  \dots s^{\upvarepsilon_{1}}_{i_1}$ in $\umut(M)$ starting at $M$ for which $T \cong \upmu_\upalpha \Lambda$. Then by Proposition \ref{mutationpaths}
\begin{align*}
\hspace{3.9cm} \End_\Lambda (T) \cong \End_\Lambda( \upmu_\upalpha \Lambda)  \cong \uEnd_{\cE}( \upnu_\upalpha M). \hspace{3.5cm} \qedhere
\end{align*}
\end{proof}

Using this result, to completely determine the basic algebras in the derived equivalence class, we just need to show that every basic tilting complex for such a $\Lambda$ can be obtained by iterated right or left mutation from $\Lambda$. By Lemma \ref{tiltingconnected}, this will follow from showing $\Lambda$ is silting-discrete.

\begin{thm} \label{contractiltingdis}
Under the setup of \ref{setup2}, let $M \in \ucE$ be a rigid object and $\Lambda \colonequals \uEnd_\cE(M)$. Then $\Lambda$ is a silting-discrete algebra. 
\end{thm}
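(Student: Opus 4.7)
\begin{proof}[Proof sketch]
The plan is to apply the equivalent characterization of tilting-discreteness in Theorem \ref{tiltingdiscrete}. By Proposition \ref{symmalg}, the algebra $\Lambda = \uEnd_\cE(M)$ is symmetric, so we may use the criterion that it suffices to show $\twotilt_P \Lambda$ is finite for every tilting complex $P$ obtained by iterated irreducible left mutation from $\Lambda$.

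So fix such a $P$. Since every irreducible left mutation at an indecomposable summand corresponds to an arrow $s_i$ in $X_M$, the complex $P$ has the form $P \cong \upmu_\upalpha \Lambda$ for some positive path $\upalpha$ starting at $M$. Let $N \colonequals \upnu_\upalpha M \in \mut(M)$ and $\Gamma \colonequals \uEnd_\cE(N)$. Then by Proposition \ref{mutationpaths}(1), the derived equivalence $F_\upalpha \colon \Db(\Lambda) \to \Db(\Gamma)$ sends $P$ to $\Gamma$.

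By Lemma \ref{tracking}, $F_\upalpha$ sends (two-term) silting complexes to silting complexes and preserves the silting order. Hence it restricts to a bijection
\[
\twotilt_P \Lambda \;\longleftrightarrow\; \twotilt_\Gamma \Gamma \;=\; \twotilt \Gamma.
\]
So it is enough to prove that $\twotilt \Gamma$ is finite. Since $\Gamma$ is symmetric by Proposition \ref{symmalg}, $\twotilt \Gamma = \twosilt \Gamma \subseteq \presilt \Gamma$. By Theorem \ref{ctbij} applied to the rigid object $N$, there is a bijection $\presilt \Gamma \leftrightarrow \rig(N * \Sigma N)$, so in particular $\twosilt \Gamma$ injects into the set of basic rigid objects of $\ucE$. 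By Setup \ref{setup2} together with Lemma \ref{rigsummand}, $\ucE$ has only finitely many basic rigid objects, so $\twotilt \Gamma$ is finite. Pulling back through $F_\upalpha$ gives $|\twotilt_P \Lambda| < \infty$, and the criterion of Theorem \ref{tiltingdiscrete} now shows $\Lambda$ is tilting-discrete.
\end{proof}

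The main content is the transport-of-structure step: the non-trivial assertion is that every tilting complex one might encounter in the interval $[P, P[1]]$ for some mutated $P$ can be tracked back, via a concrete derived equivalence built from mutation, to a two-term tilting complex over the stable endomorphism algebra of some \emph{other} rigid object. The only real obstacle is lining up the bookkeeping: one needs Proposition \ref{mutationpaths} to guarantee that $F_\upalpha(P) = \Gamma$ (so that the interval $[P,P[1]]$ maps to $[\Gamma,\Gamma[1]]$), and Lemma \ref{tracking} to ensure this bijection respects the silting order so that ``two-term'' is preserved. Once those are in place, finiteness is immediate from the global finiteness of $\rig \ucE$ guaranteed by the setup.
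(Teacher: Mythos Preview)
Your proof is correct and follows essentially the same route as the paper: verify condition (3) of Theorem \ref{tiltingdiscrete} by writing $P \cong \upmu_\upalpha \Lambda$ for a positive path $\upalpha$, transport the interval $[P,P[1]]$ to $[\Gamma,\Gamma[1]]$ via $F_\upalpha$ using Proposition \ref{mutationpaths} and Lemma \ref{tracking}, and then bound $\twotilt \Gamma$ by the finite set of rigid objects via Theorem \ref{ctbij} and Lemma \ref{rigsummand}. The only cosmetic difference is that the paper first treats the base case $P=\Lambda$ separately before handling the general mutated $P$, whereas you subsume it into the general argument.
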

\begin{proof}
We will check condition $(3)$ of Theorem \ref{tiltingdiscrete}, namely that $ \twosilt_P \Lambda$ is a finite set for any silting object $P$ which is given by iterated left mutation from $\Lambda$.

By Theorem \ref{ctbij}, two-term silting complexes for $\Lambda$ are in bijection with certain rigid objects of $\ucE$ lying in $M *\Sigma M$. However, recall from Remark \ref{finiterigid}(2) that our setup ensures that there are only finitely many rigid objects in $\ucE$ and hence in $M *\Sigma M$. Thus, $\twosilt_{\Lambda} \Lambda= \twosilt \Lambda$ is finite.

Now suppose $T$ is a silting (and hence in this case tilting by Proposition \ref{symmalg}) complex obtained by iterated left mutation of $\Lambda$. Thus, $T \cong \upmu_\upalpha \Lambda$ for some positive path $\upalpha$ in $\umut(M)$ starting at $M$. If $\upalpha$ ends at the rigid object $N$, then writing $\Gamma \colonequals \uEnd_\cE(N)$, the associated equivalence
 \begin{align*}
F_\upalpha \colon \Db(\Lambda) &\to \Db(\Gamma)
\end{align*}
maps $T$ to $\Gamma$ by Proposition \ref{mutationpaths}. Thus, applying Lemma \ref{tracking} to $F_\upalpha$, there is a bijection
\begin{align*}
 \{P \in \silt \Lambda \mid T \geq P \geq T[1] \} \longleftrightarrow \{Q \in \silt \Gamma \mid \Gamma \geq Q \geq \Gamma[1] \}.
\end{align*}
By definition, the left hand side is $\twosilt_T \Lambda$, while the right hand side is $\twosilt \Gamma$.
But since $\Gamma$ is also the endomorphism algebra of a rigid object in $\ucE$, the first argument shows $\twosilt \Gamma$ must be finite and hence so is $\twosilt_T \Lambda$.
\end{proof}
The following is our main result. Part $(1)$ is a consequence of Theorem \ref{contractiltingdis} showing how to view all tilting complexes combinatorially in $\umut(M)$ and part $(2)$ is a consequence of the fact this viewpoint allows us to control the endomorphism rings of the tilting complexes via Proposition \ref{mutationpaths}.
\begin{cor} \label{mainresult}
In the setup of \ref{setup2}, choose a rigid object $M \in \ucE$ and write $ \Lambda \colonequals \uEnd_\cE(M)$. Then the following statements hold.
\begin{enumerate} 
\item Any tilting complex of $\Lambda$ is isomorphic to $\upmu_\upalpha \Lambda$ (defined in \ref{notation}) for some (not necessarily positive) path $\upalpha$ in $\umut(M)$ starting at vertex $M$.
\item The basic algebras derived equivalent to $\Lambda$ are precisely the algebras 
\begin{align*}
\{ \uEnd_\cE(N) \mid N \in \mut(M) \},
\end{align*}
of which there are only finitely many.
\end{enumerate}
\end{cor}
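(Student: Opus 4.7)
The plan is to assemble the pieces already established: tilting-discreteness of $\Lambda$ (Theorem \ref{contractiltingdis}), Aihara's connectedness result for tilting-discrete algebras (Proposition \ref{tiltingconnected}), the identification of endomorphism rings along paths (Proposition \ref{mutationpaths}), and Rickard's theorem.

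For part (1), I would start by invoking Theorem \ref{contractiltingdis} to conclude that $\Lambda = \uEnd_\cE(M)$ is tilting-discrete. Proposition \ref{symmalg} ensures $\Lambda$ is symmetric, so silting and tilting coincide for $\Lambda$ and every mutation of a tilting complex is again a tilting complex. By Proposition \ref{tiltingconnected}, any tilting complex $T$ of $\Lambda$ is obtained from $\Lambda$ by a finite sequence of mutations, each either left or right. Such a sequence $\upmu_{i_m}^{\upvarepsilon_m} \cdots \upmu_{i_1}^{\upvarepsilon_1} \Lambda$ specifies (by reading off indices and signs) a path $\upalpha = s_{i_m}^{\upvarepsilon_m} \cdots s_{i_1}^{\upvarepsilon_1}$ in the mutation graph $X_M$ starting at the vertex $M$, with $T \cong \upmu_\upalpha \Lambda$ in the notation of \ref{notation}. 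This gives (1).

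For part (2), one inclusion is straightforward: for any $N \in \mut(M)$, iterating Theorem \ref{derivedequiv} (using that in our setup any mutation of a tilting complex is again a tilting complex) yields a zig-zag of derived equivalences $\Db(\Lambda) \simeq \Db(\uEnd_\cE(N))$, so $\uEnd_\cE(N)$ lies in the derived equivalence class. For the converse, let $B$ be any basic algebra derived equivalent to $\Lambda$. By Rickard's theorem \cite{morita}, $B \cong \End_\Lambda(T)$ for some basic tilting complex $T \in \Kb(\proj \Lambda)$. By part (1), $T \cong \upmu_\upalpha \Lambda$ for some path $\upalpha$ in $X_M$ starting at $M$, and then Proposition \ref{mutationpaths}(2) gives
\[
B \cong \End_\Lambda(\upmu_\upalpha \Lambda) \cong \uEnd_\cE(\upnu_\upalpha M),
\]
where $\upnu_\upalpha M \in \mut(M)$ by construction. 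This establishes the equality of sets claimed in (2).

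Finally, finiteness is immediate from the setup: by Lemma \ref{rigsummand}, every basic rigid object is a summand of a basic maximal rigid object, and since $\ucE$ has only finitely many basic maximal rigid objects, there are only finitely many basic rigid objects in $\ucE$. In particular $\mut(M) \subseteq \rig \ucE$ is finite, hence so is the set $\{\uEnd_\cE(N) \mid N \in \mut(M)\}$. No step should present a genuine obstacle, as the hard technical work has been done in Theorem \ref{contractiltingdis} and Proposition \ref{mutationpaths}; the main point is simply to stitch them together via Rickard's theorem, with care taken to allow both left and right mutations in the paths so that Proposition \ref{tiltingconnected} can actually be applied.
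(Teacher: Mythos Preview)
Your proposal is correct and follows essentially the same route as the paper: part (1) via tilting-discreteness (Theorem \ref{contractiltingdis}) plus Proposition \ref{tiltingconnected}, and part (2) via Rickard's theorem combined with Proposition \ref{mutationpaths}(2), with finiteness from Lemma \ref{rigsummand}. The only difference is that you make explicit the easy inclusion in (2) and the role of Proposition \ref{symmalg}, both of which the paper leaves implicit from earlier discussion.
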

\begin{proof}
\begin{enumerate}[leftmargin=0cm,itemindent=.6cm,labelwidth=\itemindent,labelsep=0cm,align=left]
\item Take $T \in \tilt \Lambda$. Since $\Lambda$ is silting-discrete by Theorem \ref{contractiltingdis}, Proposition \ref{tiltingconnected} shows
\begin{align*}
T \cong  \upmu^{\upvarepsilon_{m}}_{i_m}  \dots \upmu^{\upvarepsilon_{1}}_{i_1} \Lambda
\end{align*}
for some $i_1, \dots, i_m \in \{1, \dots n\}$ and $\upvarepsilon_{i} \in \{-1,1\}$. This defines a path $\upalpha \colonequals s^{\upvarepsilon_{m}}_{i_m}  \dots s^{\upvarepsilon_{1}}_{i_1}$ starting at $M$ for which $T \cong \upmu_\upalpha \Lambda$ by definition.
\item Suppose $\Gamma $ is a basic algebra derived equivalent to $\Lambda$. Then $\Gamma \cong \End_\Lambda(T)$ for some basic tilting complex $T \in \tilt \Lambda$ by Theorem \ref{rickards}. However, by part $(1)$ this shows $\Gamma \cong  \End_\Lambda(\upmu_\upalpha \Lambda)$ for some path $\upalpha$ starting at $M$ and hence by Proposition \ref{mutationpaths}, $\Gamma \cong  \uEnd_\cE(\upnu_\upalpha M)$ as required. Finally, the set is finite since, by Remark \ref{finiterigid}(2), our setup ensures that there are only finitely many basic rigid objects in $\ucE$, and hence in $\mut(M)$. \qedhere
\end{enumerate}
\end{proof}
Recall from Remark \ref{maxrigremark}(3) that, as there are only finitely many basic maximal rigid objects in $\ucE$, $\mut (M) = \mrig \ucE$ for any maximal rigid object $M \in \ucE$. Therefore, when restricted to maximal rigid objects, part $(2)$ of Corollary \ref{mainresult} specialises to the following.
\begin{cor} \label{maxrig}
In the setup of \ref{setup2}, choose a maximal rigid object $M \in \ucE$ and write $ \Lambda \colonequals \uEnd_\cE(M)$. Then the basic algebras derived equivalent to $\uEnd_\cE(M)$ are precisely the stable endomorphism algebras of maximal rigid objects in $\ucE$. In particular, there are only finitely many such algebras.
\end{cor}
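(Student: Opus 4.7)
The plan is to deduce this statement as a direct specialisation of Corollary \ref{mainresult}(2). The assertion only has content when $M$ is itself maximal rigid (otherwise $\uEnd_{\cE}(M)$ is derived equivalent to itself but is not the stable endomorphism algebra of a maximal rigid object), and this is precisely the situation signalled by the paragraph immediately preceding the Corollary. Granting that, Corollary \ref{mainresult}(2) already identifies the basic algebras derived equivalent to $\Lambda$ with the set $\{\uEnd_\cE(N) \mid N \in \mut(M)\}$, and this set is finite. Hence the whole task reduces to establishing the single equality $\mut(M) = \mrig \ucE$.

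For this equality, my strategy is to invoke exactly the observation already recorded in the paragraph above the Corollary. The inclusion $\mut(M) \subseteq \mrig \ucE$ is immediate because mutating a maximal rigid object at one of its indecomposable summands produces an object which differs from the original by a single indecomposable summand and is still rigid, hence still maximal rigid by \cite[5.3]{yoshino}. The reverse inclusion $\mrig \ucE \subseteq \mut(M)$ is the genuine input: it asserts that the mutation graph $X_{\ucE}$ on maximal rigid objects is connected. This is precisely the content of \cite[4.9]{[AIR]}, whose hypothesis is the finiteness of $\mrig \ucE$, which is one of the assumptions of Setup \ref{setup2}.

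Combining these two inclusions with Corollary \ref{mainresult}(2) gives that the basic algebras derived equivalent to $\uEnd_{\cE}(M)$ are precisely $\{\uEnd_{\cE}(N) \mid N \in \mrig \ucE\}$, and finiteness is inherited from the finiteness of $\mrig \ucE$. I do not expect any real obstacle: the substance of the result lives in Corollary \ref{mainresult} (via the tilting-discreteness of $\Lambda$ established in Theorem \ref{contractiltingdis} and the translation between mutation of rigid objects and mutation of tilting complexes in Proposition \ref{mutationpaths}), while the current Corollary is merely a linguistic repackaging once one knows that the mutation class of a maximal rigid object exhausts $\mrig \ucE$.
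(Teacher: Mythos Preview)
Your proposal is correct and follows exactly the paper's approach: the paragraph preceding Corollary~\ref{maxrig} already records that $\mut(M)=\mrig\ucE$ for maximal rigid $M$ (citing \cite[4.9]{[AIR]}, as you do), and states that the corollary is simply the specialisation of Corollary~\ref{mainresult}(2) to this case. Your observation that the hypothesis should read ``maximal rigid'' rather than ``rigid'' is also correct and matches the introduction's Corollary~\ref{maxrigintro}.
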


\begin{example} \label{ex2}
Returning to Example \ref{ex1}, we set $\Lambda \colonequals \uEnd_\cE(M)$ and $\Lambda_{i_m \dots i_1} \colonequals \uEnd_\cE(M_{i_m \dots i_1})$. Corollary \ref{mainresult} shows that the diagram
\begin{center}
 \begin{tikzpicture}[scale=1.3,bend angle=15, looseness=1,>=stealth]
\node (C+) at (45:1.5cm) [] {$ \scriptstyle \Lambda$};
\node (C1) at (90:1.5cm) [] {$ \scriptstyle \Lambda_1$};
\node (C2) at (135:1.5cm) [] {$ \scriptstyle \Lambda_{21}$};
\node (C3) at (180:1.5cm) [] {$ \scriptstyle \Lambda_{121}$};
\node (C-) at (225:1.5cm) [] {$\scriptstyle \Lambda_{2121}$};
\node (C4) at (270:1.5cm) [] {$\scriptstyle \Lambda_{212}$};
\node (C5) at (315:1.5cm) [] {$\scriptstyle \Lambda_{12}$};
\node (C6) at (0:1.5cm) [] {$\scriptstyle \Lambda_{2}$};
\draw[->, bend right]  (C+) to (C1);
\draw[->, bend right]  (C1) to (C+);
\draw[->, bend right]  (C1) to (C2);
\draw[->, bend right]  (C2) to (C1);
\draw[->, bend right]  (C2) to (C3);
\draw[->, bend right]  (C3) to (C2);
\draw[->, bend right]  (C3) to (C-);
\draw[->, bend right]  (C-) to  (C3);
\draw[->, bend right]  (C+) to  (C6);
\draw[->, bend right]  (C6) to  (C+);
\draw[->, bend right]  (C6) to  (C5);
\draw[->, bend right]  (C5) to (C6);
\draw[->, bend right]  (C5) to  (C4);
\draw[->, bend right]  (C4) to (C5);
\draw[->, bend right]  (C4) to  (C-);
\draw[->, bend right]  (C-) to (C4);
\node at (67.5:1.1cm) {$\scriptstyle s_1$};
\node at (67.5:1.65cm) {$\scriptstyle s_1$};
\node at (112.5:1.1cm) {$\scriptstyle s_2$};
\node at (112.5:1.65cm) {$\scriptstyle s_2$};
\node at (157.5:1.1cm) {$\scriptstyle s_1$};
\node at (157.5:1.65cm) {$\scriptstyle s_1$};
\node at (202.5:1.1cm) {$\scriptstyle s_2$};
\node at (202.5:1.65cm) {$\scriptstyle s_2$};
\node at (247.5:1.1cm) {$\scriptstyle s_1$};
\node at (247.5:1.65cm) {$\scriptstyle s_1$};
\node at (292.5:1.1cm) {$\scriptstyle s_2$};
\node at (292.5:1.65cm) {$\scriptstyle s_2$};
\node at (337.5:1.1cm) {$\scriptstyle s_1$};
\node at (337.5:1.65cm) {$\scriptstyle s_1$};
\node at (382.5:1.1cm) {$\scriptstyle s_2$};
\node at (382.5:1.65cm) {$\scriptstyle s_2$};
\end{tikzpicture}
\end{center}
can be thought of as a `picture' of the derived equivalence class of $\Lambda$. The vertices are precisely the basic algebras in the derived equivalence class and the (not necessarily positive) paths starting at a given vertex control all of the tilting complexes of that algebra. Further, the end vertex of a path determines the endomorphism algebra of the associated tilting complex and thus we think of each path as a derived equivalence induced by that tilting complex. 
If we recall from \S \ref{derivedprelim} that any tilting complex induces a unique standard derived equivalence (up to algebra isomorphism), we obtain the following result.
\end{example}

\begin{cor} \label{standard}
Under the setup of \ref{setup2}, choose a rigid object $M \in \ucE$ and write $\Lambda  \colonequals \uEnd_\cE(M)$. If the $F_i$ of \eqref{deffunc} are chosen to be standard equivalences, then the following statements hold.
\begin{enumerate}
\item For any path $\upalpha \colon \overline{N} \to \overline{N}' $ in $\umut(M)$ the derived equivalence $F_\upalpha \colon \Db(\uEnd_\cE(N)) \to \Db(\uEnd_\cE(N'))$ given in \ref{notation} is a standard equivalence mapping the tilting complex $\upmu_\upalpha \uEnd_\cE(N)$ to $\uEnd_\cE(N')$.
\item Up to algebra isomorphism, any standard equivalence from $\Db(\Lambda)$ is obtained by composition of the $F_i$ and their inverses.
\end{enumerate}
\end{cor}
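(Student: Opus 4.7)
The plan is to leverage Propositions \ref{mutationpaths} and \ref{rouq} together with the standard facts about two-sided tilting complexes recalled at the start of \S\ref{standard}. First, I record the key formal observation: since compositions and inverses of standard equivalences are again standard (by \cite[4.1]{standard}), choosing each building-block $F_i$ of \eqref{deffunc} to be a standard equivalence ensures that $F_\upalpha$ is standard for every path $\upalpha$ in $X_M$. This reduces the problem to matching $F_\upalpha$ against the canonical standard equivalence attached to the tilting complex $\upmu_\upalpha \uEnd_\cE(N)$.

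For part $(1)$, I would argue as follows. Fix a path $\upalpha \colon N \to N'$ in $X_M$ and set $\Lambda_N \colonequals \uEnd_\cE(N)$. By Keller's theorem, there is a two-sided tilting complex $\tT$ whose underlying one-sided complex is isomorphic to $\upmu_\upalpha \Lambda_N$, inducing a standard equivalence $G \colonequals \RHom_{\Lambda_N}(\tT,-)$ which carries $\upmu_\upalpha \Lambda_N$ to $\End_{\Lambda_N}(\upmu_\upalpha \Lambda_N)$. By Proposition \ref{mutationpaths}(2), this endomorphism ring is $\uEnd_\cE(N')$. On the other hand, Proposition \ref{mutationpaths}(1) shows that $F_\upalpha$ is a standard equivalence sending $\upmu_\upalpha \Lambda_N$ to $\uEnd_\cE(N')$ as well. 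Both $G$ and $F_\upalpha$ are standard equivalences $\Db(\Lambda_N) \to \Db(\uEnd_\cE(N'))$ whose associated two-sided tilting complexes restrict to isomorphic one-sided tilting complexes; applying Proposition \ref{rouq} directly then identifies $F_\upalpha$ with $G$ up to an algebra automorphism of $\uEnd_\cE(N')$.

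For part $(2)$, I would start from an arbitrary standard equivalence $G \colon \Db(\Lambda) \to \Db(\Gamma)$, write $G = \RHom_\Lambda(\tT,-)$, and let $T$ denote the corresponding one-sided tilting complex in $\Db(\Lambda)$. By Corollary \ref{mainresult}(1) there is a path $\upalpha$ in $X_M$ starting at $M$ with $T \cong \upmu_\upalpha \Lambda$, and by Proposition \ref{mutationpaths}(2) we have $\Gamma \cong \End_\Lambda(T) \cong \uEnd_\cE(\upnu_\upalpha M)$. Applying part $(1)$ to $\upalpha$, the standard equivalence $F_\upalpha$ is, up to algebra automorphism, the standard equivalence induced by $\upmu_\upalpha \Lambda$; but $G$ is also (by construction) the standard equivalence induced by $\upmu_\upalpha \Lambda$, so a second application of Proposition \ref{rouq} identifies $G$ with $F_\upalpha$ up to an algebra automorphism of $\Gamma$.

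The main obstacle I anticipate is purely bookkeeping rather than conceptual: one must be careful that the ``up to algebra automorphism'' clause from Proposition \ref{rouq} is applied on the correct side (i.e.\ as an automorphism of the target algebra) and that, when composing equivalences along a path, these automorphisms can be absorbed without altering the underlying tilting complex. Once one checks that two standard equivalences with the same source, target and underlying one-sided tilting complex really do differ only by an algebra automorphism of the target, the argument above goes through verbatim.
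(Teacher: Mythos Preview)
Your proposal is correct and follows essentially the same approach as the paper: both arguments show $F_\upalpha$ is standard as a composite of standard equivalences, identify it via Proposition~\ref{rouq} as the standard equivalence attached to $\upmu_\upalpha \uEnd_\cE(N)$ using Proposition~\ref{mutationpaths}, and then for part~(2) extract the underlying one-sided tilting complex from an arbitrary standard equivalence, realise it as $\upmu_\upalpha \Lambda$ via tilting-discreteness (the paper cites Propositions~\ref{tiltingconnected} and~\ref{contractiltingdis} directly rather than Corollary~\ref{mainresult}(1), but these are equivalent), and conclude with Proposition~\ref{rouq}. Your version is slightly more explicit about invoking Keller's theorem and about the bookkeeping around the ``up to automorphism'' clause, but the substance is identical.
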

\begin{proof}
\begin{enumerate}[leftmargin=0cm,itemindent=.6cm,labelwidth=\itemindent,labelsep=0cm,align=left]
\item Take a path $\upalpha \colon \overline{N} \to \overline{N}'$ in $\umut(M)$. If $\Gamma \colonequals \uEnd_\cE(N)$ and $\Delta \colonequals \uEnd_\cE(N')$, then as in \ref{notation}, $\upalpha$ corresponds to a tilting complex $\upmu_\upalpha \Gamma$ and a derived equivalence
\begin{align*}
 F_{\upalpha} \colon \Db(\Gamma) &\to \Db(\Delta)\\
\upmu_\upalpha \Gamma &\mapsto \Delta 
\end{align*}
using Proposition \ref{mutationpaths}. If the $F_i$ are standard equivalences, then $F_\upalpha$ must also be and hence $F_\upalpha$ is the unique (up to algebra isomorphism) standard equivalence associated to the tilting complex $\upmu_\upalpha \Gamma$.
\item Take any standard equivalence $F \colon \Db(\Lambda) \to \Db(\Gamma)$. Then $F^{-1}(\Gamma)$ is a tilting complex for $\Lambda$ and so by Propositions \ref{tiltingconnected} and  \ref{contractiltingdis}, $F^{-1}(\Gamma) \cong \upmu_\upalpha \Lambda$ for some path $\upalpha$ in $\umut(M)$ starting at $M$. By part $(1)$, $F_\upalpha$ is also a standard equivalence induced by $\upmu_\upalpha \Lambda$ and hence $F$ and $F_\upalpha$ must be the same up to an algebra isomorphism by Proposition \ref{rouq}.  \qedhere
\end{enumerate}
\end{proof}
In this way, we can say the diagram in Example \ref{ex2} not only contains all the basic members of the derived equivalence class but also all the standard equivalences between them (up to algebra isomorphism) and that these correspond precisely to the paths. In this way, we have a complete picture of the derived equivalence class of these algebras. 
\begin{remark}
In this setting, there is no way of telling when two paths give rise to the same derived equivalence. To be able to say this, we would need to define explicit standard equivalences and then understand how to compose them. This idea is explored in \cite{hyperplane} where additional structure coming from the geometric setting is used in order to choose the $F_i$ explicitly. 
\end{remark}

\section{Geometric Application} \label{geometricapplication}
In this section, we apply the results from the previous section to certain invariants of $3$-fold flopping contractions, known as contraction algebras. 

\subsection{Complete Local Setup}
For our purposes, a 3-fold flopping contraction is a projective birational morphism $f \colon X \to X_\con$ between Gorenstein normal $\mathbb{C}$-schemes of dimension three satisfying $\Rf_*\mathcal{O}_X = \mathcal{O}_{X_\con}$, and which further is an isomorphism in codimension one. When the base of the flopping contraction is affine and complete local, contraction algebras have a very explicit construction and so we begin with this case. 
\begin{setup} \label{comploc}
Take $f \colon X \to \Spec \rR$ to be a $3$-fold flopping contraction where $\rR$ is complete local and $X$ has at worst Gorenstein terminal singularities.
\end{setup}
The condition on $X$ in this setup is satisfied when $X$ is smooth but also allows for some mild singularities. Moreover, the condition also forces $\Spec \rR$ to have at worst Gorenstein terminal singularities and hence to be an isolated cDV singularity \cite{reid}. 
\begin{definition}
A three dimensional complete local $\mathbb{C}$-algebra $\rR$ is a \emph{compound Du Val (cDV) singularity} if $\rR$ is isomorphic to 
\begin{align*}
\mathbb{C}\llbracket u,v,x,y\rrbracket/ (f(u,v,x) + yg(u,v,x,y)) 
\end{align*}
where $\mathbb{C}\llbracket u,v,x \rrbracket/(f(u,v,x))$ is a Du Val surface singularity and $g$ is arbitrary. 
\end{definition}
We will sometimes wish to assume further that $X$ is $\mathbb{Q}$-factorial (see \cite[\S2]{HomMMP} for a definition) but will only do so if explicitly stated.
\begin{definition}
A $3$-fold flopping contraction $f \colon X \to \Spec \rR$ where $\rR$ is complete local and $X$ has at worst $\mathbb{Q}$-factorial terminal singularities is called a \emph{minimal model} of $\Spec \rR$.
\end{definition}
For a cDV singularity, it is well known that there are only finitely many minimal models \cite{[KM]} and one goal of the Homological Minimal Model Programme was to provide an algorithm that can produce all the minimal models from a given one, similar to how all maximal rigid objects can be obtained via iterated mutation from a given maximal rigid object in our previous setting. The key observation is that the maximal Cohen--Macaulay modules of $\rR$ link the two settings.
\begin{definition} 
Let $(R, \mathfrak{m})$ be a commutative noetherian local ring and choose $M \in \mod R$. Then define the depth of $M$ to be
\begin{align*}
\mathrm{depth}_R(M)=\mathrm{min}\{ i \geq 0 \mid \Ext^i_R(R/ \mathfrak{m},M) \neq 0\}.
\end{align*}
We say that $M$ is \emph{maximal Cohen--Macaulay} $(\mathrm{CM})$ if $\mathrm{depth}_R(M)=\mathrm{dim}(R)$ and write $\cmr$ for the full subcategory of $\mod R$ consisting of maximal Cohen--Macaulay modules.
\end{definition}
The following summary theorem asserts that $\cE \colonequals \cmrr$ satisfies all but the last condition of Setup \ref{setup2}. For details, and full references, see e.g. \cite[\S1]{symmetric}.
\begin{proposition}   \label{prop} 
If $\rR$ is a complete local isolated cDV singularity, $\cmrr$ is a Frobenius category. Moreover, the stable category $\ucmrr$ is a Krull-Schmidt, Hom-finite, 2-Calabi-Yau triangulated category with shift functor $\Sigma$ satisfying $\Sigma^2 \cong \id$. 
\end{proposition}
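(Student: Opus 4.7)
The plan is to assemble the statement from several well-known structural results about maximal Cohen--Macaulay modules over Gorenstein rings. Since the paper presents this as a summary proposition with references, I will sketch how the pieces fit together rather than redo the original proofs.

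First I would observe that, by definition, a cDV singularity is a hypersurface ring, hence Gorenstein. Combined with the fact that the singularity is isolated (this is built into the Setup \ref{comploc} hypothesis, since Gorenstein terminal threefold singularities are isolated by Reid), this gives the two structural inputs needed. For a Gorenstein local ring $R$, the category $\cm R$ of maximal Cohen--Macaulay modules is a full exact subcategory of $\mod R$, and by a standard computation using $\Ext_R^i(-,R) = 0$ on $\cm R$ for $i>0$, the projective-injective objects of $\cm R$ coincide with $\add R$. Exact sequences ending in a projective split, so there are enough projectives and enough injectives and they coincide; this gives the Frobenius structure and the identification $\proj (\cm R) = \add R$.

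Next I would deduce the categorical properties of the stable category $\ucmrr$. Krull--Schmidt follows because $\rR$ is complete local, so $\mod \rR$ is Krull--Schmidt and the property is inherited by the additive quotient $\ucmrr$. Hom-finiteness in the stable category is the content of the Auslander--Reiten formula for isolated Gorenstein singularities: if $\rR$ is an isolated singularity then $\uHom_\rR(M,N)$ has finite length (hence finite dimension over the residue field $k$) for all $M,N \in \cmrr$. That $\ucmrr$ is triangulated, with shift functor $\Sigma = \Omega^{-1}$ given by the cosyzygy (i.e.\ the cokernel of a left $\add \rR$-approximation), is Buchweitz's theorem for Gorenstein rings, proved via the identification $\ucm \rR \simeq \Dsg(\rR)$.

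For the 2-Calabi--Yau property, I would invoke Auslander's classical result (extended by Iyama--Reiten): for a $d$-dimensional Gorenstein isolated singularity $R$, the stable category $\ucm R$ is $(d{-}1)$-Calabi--Yau. Since $\dim \rR = 3$, this yields the 2-CY property. The argument proceeds via local duality, which supplies the required bifunctorial isomorphism $\uHom_\rR(M,N) \cong D\,\uHom_\rR(N,\Sigma^2 M)$.

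The main obstacle, and the one most specific to the hypersurface case, is the periodicity $\Sigma^2 \cong \id$. For this I would appeal to Eisenbud's theorem on matrix factorizations: over a hypersurface ring $\rR = S/(f)$ with $S$ regular, every MCM module $M$ admits a $2$-periodic free resolution arising from a matrix factorization $(\varphi,\psi)$ of $f$, so $\Omega^2 M \cong M$ in $\ucmrr$. Assembling this into a natural isomorphism $\Sigma^2 \cong \id$ of functors on the stable category is the content of Eisenbud's construction. Since the cDV hypothesis ensures $\rR$ is in fact a hypersurface (in four variables), this applies verbatim and completes the proof; the only care required is to ensure naturality, which follows from the functoriality of the matrix factorization construction.
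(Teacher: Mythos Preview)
Your proposal is correct and matches the paper's approach: the paper states this as a summary proposition without proof, merely directing the reader to \cite[\S1]{symmetric} for details and full references, and your assembly (Gorenstein/hypersurface $\Rightarrow$ Frobenius with $\proj=\add\rR$; complete local $\Rightarrow$ Krull--Schmidt; isolated singularity $\Rightarrow$ Hom-finite; Buchweitz $\Rightarrow$ triangulated; Auslander duality in dimension $3$ $\Rightarrow$ $2$-CY; Eisenbud's matrix factorizations $\Rightarrow$ $\Sigma^2\cong\id$) is exactly the standard argument that reference records.
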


In particular, rigid objects in $\ucmrr$ can be defined as in Definition \ref{definerigid}. Moreover, in Theorem \ref{HMMPbij}, we will see that $\ucmrr$ also satisfies the last condition of setup \ref{setup2} and thus we will be able to apply our main results to this setting. However, for this to be useful in studying the geometry, the rigid objects need to have connection with the geometry. This link comes about via the contraction algebras; an invariant of $3$-fold flops introduced by Donovan--Wemyss \cite{DefFlops, twists}. 

\subsection{Construction of the Contraction Algebra} \label{construction}
In the setting of \ref{comploc}, the contraction algebra attached to $f$ has a very explicit construction, detailed in \cite[3.5]{ConDef}, but provided here for convenience.

It is well known that in the setup of \ref{comploc}, $\Spec \rR$ has a unique singular point $\mathfrak{m}$ and the preimage $C \colonequals f^{-1}(\mathfrak{m})$ consists of a chain of curves. In particular, giving $C$ the reduced scheme structure, we have $C^{\mathrm{red}} = \mathop{\cup}\limits_{i=1}^n C_i$ where $C_i \cong \mathbb{P}^1$. 

For each $i$, let $\mathcal{L}_i$ be the line bundle on $X$ such that $\mathcal{L}_i \cdot C_j = \updelta_{ij}$. If the multiplicity of $C_i$ is equal to $1$, set $\mathcal{M}_i \colonequals \mathcal{L}_i$. Otherwise, define $\mathcal{M}_i$ to be given by the maximal extension 
 \begin{align*}
0 \to \mathcal{O}_X^{\oplus (r_i-1)} \to \mathcal{M}_i \to \mathcal{L}_i \to 0
\end{align*}
associated to a minimal set of $r_i-1$ generators of $H^1(X, \mathcal{L}_i^*)$ as an $\rR$-module \cite[3.5.4]{VdB}. Then, by \cite[3.5.5]{VdB},
\begin{align*}
\mathcal{O}_X \oplus \bigoplus_{i=1}^n \mathcal{M}_i^*
\end{align*}
is a tilting bundle on $X$. Associated to this is the algebra $A \colonequals \End_X(\mathcal{O}_X \oplus \bigoplus\limits_{i=1}^n \mathcal{M}_i^*)$ which is derived equivalent to the category of coherent sheaves on $X$. Pushing forward via $f$ gives $f_*(\mathcal{O}_X) \cong \rR$ and, for each $i$, $f_*(\mathcal{M}_i^*) \cong N_i$ for some $\rR$-module $N_i$. Since $f$ is a flopping contraction, there is an isomorphism \cite[3.2.10]{VdB}
\begin{align*}
A \cong \End_\rR(\rR \oplus \bigoplus_{i=1}^n N_i).
\end{align*} 

\begin{definition}
The \emph{contraction algebra} associated to $f$ is defined to be 
\begin{align*}
A_\con \colonequals \End_\rR(\rR \oplus \bigoplus_{i=1}^n N_i)/[\rR]\cong \uEnd_\rR(\rR \oplus \bigoplus_{i=1}^n N_i)
\end{align*}
where $[\rR]$ denotes the ideal of all morphisms which factor through $\add{\rR}$.
\end{definition}
\begin{remark}
The contraction algebra $A_\con$ can also be defined as the representing object of a certain deformation functor of the curves in $C$, but we will not need this here.
\end{remark}

\subsection{Summary of Results from the Homological Minimal Model Programme}
The following key proposition provides the link between contraction algebras and cluster-tilting theory.
\begin{proposition} \label{rigidcon}
Assuming the setup of \ref{comploc} and with notation as above, the following hold.
\begin{enumerate}
\item Each $N_i$ lies in $\cmrr$ and further, $N\colonequals \bigoplus\limits_{i=1}^nN_i$ is a rigid object in $\ucmrr$. 
\item $f$ is a minimal model if and only if $N \colonequals \bigoplus\limits_{i=1}^n N_i$ is maximal rigid in $\ucmrr$.
\end{enumerate}
\end{proposition}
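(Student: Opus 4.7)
The plan is to treat the two parts separately, using for (1) the standard consequences of the tilting bundle on $X$ pushed down to $\rR$, and for (2) the established dictionary between minimal models and maximal modifying $\rR$-modules developed in the Homological MMP. For part (1), I would first check that each $N_i$ is MCM. Since $f$ is crepant and $X$ has rational singularities (being Gorenstein terminal), one has $\mathbf{R}f_* \cO_X \cong \rR$ and more generally $R^{>0}f_* \cM_i^* = 0$ for the vector bundle $\cM_i^*$. A standard depth/local duality argument, using that $\rR$ is Gorenstein, then shows $N_i = f_* \cM_i^*$ is maximal Cohen--Macaulay of rank equal to the rank of $\cM_i^*$.

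For rigidity, the key input is the tilting property of $T \colonequals \cO_X \oplus \bigoplus \cM_i^*$, which yields $\Ext^{>0}_X(T, T) = 0$. Combined with the vanishing of the higher derived pushforwards of the sheaf Homs between the summands, this forces $\Ext^{>0}_\rR(\rR \oplus \bigoplus N_i, \rR \oplus \bigoplus N_i) = 0$, and in particular $\Ext^1_\rR(N_i, N_j) = 0$ for all $i, j$. Since $\rR$ is a Gorenstein isolated singularity and the $N_i$ are MCM, standard Auslander--Buchweitz theory provides a natural isomorphism $\uHom_\rR(N_i, \Sigma N_j) \cong \Ext^1_\rR(N_i, N_j)$, which gives rigidity of $N = \bigoplus N_i$ in $\ucmrr$, and hence also of each summand $N_i$.

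For part (2), I would invoke the correspondence, central to the Homological MMP, between minimal models of $\Spec \rR$ and maximal modifying $\rR$-modules up to additive closure. Concretely, $f$ is a minimal model precisely when $X$ is $\mathbb{Q}$-factorial, and under Van den Bergh's construction this translates into $\rR \oplus N$ being a maximal modifying $\rR$-module. Passing to the stable category $\ucmrr$, maximality of the modifying module $\rR \oplus N$ is exactly the statement that $N$ is a maximal rigid object. For the forward direction: if $X$ is $\mathbb{Q}$-factorial and $L \in \ucmrr$ is rigid with $N \oplus L$ still rigid, then $\rR \oplus N \oplus L$ would be modifying, forcing $L \in \add N$ by maximality of $\rR \oplus N$. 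The reverse direction goes via Van den Bergh: a strictly larger modifying module would produce a crepant partial resolution with strictly smaller class group, contradicting $\mathbb{Q}$-factoriality of $X$.

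The main obstacle is genuinely in part (2): the equivalence between $\mathbb{Q}$-factoriality of a crepant $X$ and maximality of its associated modifying module is a deep geometric input, essentially repackaging Van den Bergh's reconstruction theorem together with Iyama--Wemyss's theory of maximal modifications. The computations in (1), by contrast, are routine consequences of the crepant/tilting setup. In practice I would cite this correspondence from the Homological MMP literature rather than reprove it from scratch.
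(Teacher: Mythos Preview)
Your outline for part (2) matches the paper exactly: both reduce to the equivalence ``$f$ is a minimal model $\Leftrightarrow$ $\rR\oplus N$ is maximal modifying $\Leftrightarrow$ $N$ is maximal rigid in $\ucmrr$'', citing the Homological MMP literature (specifically \cite{[IW2]} and \cite{mmas}) rather than reproving it. Your informal description of the two directions is slightly tangled, but since you explicitly plan to cite the result this is harmless.

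In part (1), the paper's route is shorter than yours: rather than arguing directly that each $N_i$ is MCM via pushforward vanishing and duality, it cites that crepancy of $f$ forces $A=\End_\rR(\rR\oplus N)\in\cmrr$ (\cite[4.14]{[IW2]}), whence each $N_i\cong\Hom_\rR(\rR,N_i)\in\add A$ is MCM. Your Grothendieck-duality computation can be made to work, but it is essentially unpacking the cited result.

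There is, however, a genuine gap in your rigidity argument. You assert that $\Ext^{>0}_X(T,T)=0$ together with vanishing of higher pushforwards of the sheaf Homs forces $\Ext^{>0}_\rR(\rR\oplus N,\rR\oplus N)=0$. This implication does not hold as stated: those hypotheses give $Rf_*\cE nd_X(T)\cong \cE nd_\rR(f_*T)$ concentrated in degree zero, hence $\End_X(T)\cong\End_\rR(\rR\oplus N)$, but they say nothing directly about $\Ext^i_\rR(\rR\oplus N,\rR\oplus N)$ for $i>0$, since there is no simple adjunction relating $\RHom_\rR(f_*T,f_*T)$ to $\RHom_X(T,T)$. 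The paper instead observes that $A\in\cmrr$ precisely says $\rR\oplus N$ is a \emph{modifying} module, and then invokes \cite[5.12]{mmas}, which for a $3$-dimensional isolated Gorenstein singularity gives the equivalence between modifying and rigid in $\ucmrr$. That equivalence is where the passage from ``$\End$ is MCM'' to ``$\Ext^1_\rR(N,N)=0$'' actually happens, and it requires the isolated-singularity hypothesis; you should route your argument through it rather than through a spectral-sequence claim that does not hold.
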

\begin{proof}
\begin{enumerate}[leftmargin=0cm,itemindent=.6cm,labelwidth=\itemindent,labelsep=0cm,align=left]

\item As $f \colon X \to \Spec \rR$ is crepant, \cite[4.14]{[IW2]} shows that $\End_\rR(\rR \oplus N)$ lies in $\cmrr$ and hence, as $N_i \cong \Hom_\rR(\rR, N_i)$ is a direct summand, $N_i$ also lies in $\cmrr$. Thus, $N$ is a \textit{modifying module} in the terminology of \cite[4.1]{mmas} and hence, by \cite[5.12]{mmas}, satisfies $\Ext^1_\rR(N,N) =0$ which is equivalent to $N$ being rigid in $\ucmrr$.
\item By \cite[4.16]{[IW2]}, $f$ is a minimal model if and only if $\rR \oplus N$ is a \textit{maximal modifying module} and thus if and only if $N$ is a maximal rigid object in $\ucmrr$ by \cite[5.12]{mmas}. \qedhere
\end{enumerate}
\end{proof}

This proposition shows that the construction in \S \ref{construction} actually gives a well-defined map
\begin{align}
\{\text{flopping contractions as in setup \ref{comploc}} \} \to \rig \ucmrr \label{map1}
\end{align}
which restricts to a map
\begin{align}
\{\text{minimal models of $\Spec \rR$} \} \to \mrig \ucmrr. \label{map2}
\end{align}
Since $N \colonequals \bigoplus\limits_{i=1}^n N_i$ is a rigid object in $\ucmrr$, we can choose any summand $N_i$ to mutate at and produce a new rigid object $\upnu_i N$. Alternatively, consider the curve $C_i$ in the exceptional locus corresponding to the summand $N_i$. Since $f \colon X \to \Spec \rR$ is a flopping contraction and $\rR$ is complete local, choosing any such $C_i$, it is possible to factorise $f$ as
\begin{align*}
X \xrightarrow{g} X_\con \xrightarrow{h} \Spec \rR
\end{align*}
where $g(C_j)$ is a single point if and only if $j=i$. For any such factorisation, there exists a certain birational map $g^+ \colon X^+ \to X_\con$, satisfying some technical conditions detailed in \cite[2.6]{HomMMP}, which fits into a commutative diagram
\begin{center}
\begin{tikzpicture}
\node (C1) at (0, 0) {\scriptsize{$X_\con$}};
\node (C2) at (0, -1.2) {\scriptsize{$\Spec \rR$}};
\node (C3) at (-1.2, 1.2) {\scriptsize{$X$}};
\node (C4) at (1.2, 1.2)    {\scriptsize{$X^+$}};
\node at (0.55, 0.71)    {\scriptsize{$g^+$}};
\node at (-0.55, 0.71)    {\scriptsize{$g$}};
\node at (-0.1, -0.5)    {\scriptsize{$h$}};
\draw[->] (C1) -- (C2); 
\draw[->] (C3) to node[left] {$\scriptstyle f$} (C2);
\draw[->] (C4) to node[right] {$\scriptstyle f^+$} (C2);
\draw[->] (C3) -- (C1);
\draw[->] (C4) -- (C1);
\draw[->,dashed] (C3) to node[above] {$\scriptstyle \phi$} (C4);
\end{tikzpicture}
\end{center} 
where $\phi$ is a birational equivalence (see e.g. \cite[p25]{kollar} or \cite[\S 2]{Schroer}). We call $f^+ \colon X^+ \to \Spec \rR$ the \emph{simple flop} of $f$ at the curve $ C_i$. Since $f^+$ is again a flopping contraction, it also has an associated contraction algebra, constructed as in \S\ref{construction}. 

We can therefore consider two algebras obtained by `mutation' from $f$:
\begin{enumerate}
\item The algebra $\uEnd_R(\upnu_i N)$ obtained by mutating $N$ at summand $N_i$.
\item The contraction algebra $\uEnd_R(M)$ associated to $f^+$\hspace{-1.5pt}, where $f^+$ is the simple flop of $f$ at the curve $C_i$.
\end{enumerate}
The following proposition shows that these algebras are isomorphic.
\begin{proposition} \cite[4.20(1)]{HomMMP} \label{flopsmut}
Under the setup of \ref{comploc}, suppose that the contraction algebra associated to $f$ is $\uEnd_\rR(N)$. If $f^+$ is the flop of $f$ at $C_i$, then the contraction algebra of $f^+$ is $\uEnd_\rR(\upnu_i N)$.
\end{proposition}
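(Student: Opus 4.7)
The plan is to identify the rigid object $N^+\colonequals \bigoplus_j N_j^+$ coming from $X^+$ via the recipe of \S\ref{construction} with the algebraic mutation $\upnu_i N$, after which the two stable endomorphism rings agree by definition. Since the flop at $C_i$ affects only the curve $C_i$, the summands $N_j^+$ for $j\neq i$ coincide with (the strict transforms giving rise to) $N_j$, so the real content is to show $N_i^+\cong \upnu_i N_i$ in $\ucmrr$.

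First I would lift the exchange triangle
\[
N_i \xrightarrow{g} V \to \upnu_i N_i \to \Sigma N_i
\]
of Definition \ref{mutaterigid} to a short exact sequence $0\to N_i \to V\oplus \rR^{\oplus a} \to K_i \to 0$ in the Frobenius category $\cmrr$; here $g$ can be taken as a minimal left $\add(\rR \oplus N/N_i)$-approximation inside $\cmrr$ itself, and $K_i$ represents $\upnu_i N_i$ in $\ucmrr$. Standard Iyama--Wemyss mutation theory for modifying modules guarantees that $\rR\oplus (N/N_i)\oplus K_i$ is again a modifying module, with endomorphism ring derived equivalent to $A\cong \End_\rR(\rR\oplus N)$ via a tilting bimodule. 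On the geometric side, the Bridgeland--Chen--Van den Bergh flop equivalence $\Db(\coh X)\simeq \Db(\coh X^+)$ induces a derived equivalence $A\simeq A^+=\End_\rR(\rR\oplus N^+)$ that is the identity on the $j$-th summand for $j\neq i$, so both $K_i$ and $N_i^+$ play the role of the unique ``replacement'' of $N_i$ compatible with this equivalence. Uniqueness of mutation via the bijection of Theorem \ref{ctbij}, combined with the fact that $N^+$ and $\upnu_i N$ are both rigid and differ from $N$ in precisely the $i$-th summand, then forces $K_i\cong N_i^+$ in $\ucmrr$, whence $\upnu_i N\cong N^+$ and the statement follows.

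The main technical obstacle is to verify that Van den Bergh's maximal extension construction defining $\cM_i^+$ on $X^+$ pushes forward under $f^+_*$ to exactly the approximation sequence defining $K_i$. Concretely, one needs the identification
\[
H^1(X^+,(\cL_i^+)^{*})\cong \Ext^1_\rR(\rR\oplus N/N_i,\,N_i),
\]
so that the tautological extension $0\to \cO_{X^+}^{\oplus(r-1)}\to \cM_i^+\to \cL_i^+\to 0$ corresponds under $\RDerived{f^+_*}$ to a minimal left $\add(\rR\oplus N/N_i)$-approximation of $N_i$. This compatibility, which passes through the perverse-sheaf hearts of Bridgeland--Van den Bergh, is the geometric heart of the argument and is ultimately the content of \cite[\S4]{HomMMP}.
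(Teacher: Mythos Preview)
The paper does not actually prove this proposition: it is quoted verbatim from \cite[4.20(1)]{HomMMP} and no argument is given beyond the citation. So there is nothing in the paper to compare your proposal against; your sketch is already going well beyond what the paper does.

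That said, your outline is a faithful approximation of how the result is established in \cite{HomMMP}. The identification of $N_j^+$ with $N_j$ for $j\neq i$, the lifting of the exchange triangle to an approximation sequence in $\cmrr$, and the appeal to the Bridgeland--Chen--Van den Bergh flop equivalence are all the right moves. Your final paragraph correctly isolates the genuine technical input: one must match Van den Bergh's maximal-extension bundle on $X^+$ with the algebraic approximation defining $K_i$, and this is precisely the content of \cite[\S4]{HomMMP} rather than something one can deduce from the material in the present paper. One small caution: your uniqueness step via Theorem~\ref{ctbij} is a bit glib. Knowing that $N^+$ and $\upnu_i N$ are both rigid and differ from $N$ in a single summand does not by itself pin them down, since a priori $N^+$ need not lie in $N*\Sigma N$; establishing that it does is part of the geometric computation you defer to \cite{HomMMP}, not a formal consequence of the bijection.
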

Since $f^+$ will have the same number of curves in the exceptional locus as $f$, we can consider iterated flops.
\begin{notation}
Let $f$ be a flopping contraction as in setup \ref{comploc} and suppose there are curves $C_1, \dots, C_n$ in the exceptional locus of $f$. Given a sequence $(i_1, \dots, i_m)$ where $i_j \in \{1, \dots n \}$ we obtain a flopping contraction $f_{i_m \dots i_1}$ defined iteratively via:
\begin{enumerate}
\item $f_{i_1}$ is the simple flop of the $f$ at curve $C_{i_1}$.
\item $f_{i_j \dots i_1}$ is the flop of $f_{i_{j-1}\dots i_1}$ at the curve $C_{i_j}$ for $1 < j \leq m$.
\end{enumerate} 
We call the sequence $(i_1, \dots, i_m)$ a \emph{mutation sequence}.
\end{notation}
Repeated use of Proposition \ref{flopsmut} shows that the contraction algebra of $f_{i_m \dots i_1}$ is given by 
\begin{align*}
\uEnd_\rR(\upnu_{i_m} \dots \upnu_{i_1} N).
\end{align*}
In other words, the maps \eqref{map1} and \eqref{map2} respect mutation. Using this, and the fact there are only finitely many minimal models, it is possible to show that the map in \eqref{map2} is surjective \cite[4.10]{HomMMP}. Further, both maps can be seen to be injective \cite[4.4]{HomMMP} which gives the following result.

\begin{thm} \cite[4.10]{HomMMP} \label{HMMPbij}
If $\Spec \rR$ is a complete local isolated cDV singularity, then the construction in \S\ref{construction} yields a bijection 
\begin{align*}
\{\text{minimal models of $\Spec \rR$}\}  \longleftrightarrow  \mrig \ucmrr
\end{align*}
which respects mutation. Here, two minimal models are identified if they are isomorphic as $R$-schemes.
\end{thm}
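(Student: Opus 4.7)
The plan is to establish four properties of the construction from Section \ref{construction}: well-definedness, respect of mutation, injectivity, and surjectivity.

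\emph{Well-definedness and mutation-respecting.} Well-definedness is immediate from Proposition \ref{rigidcon}(2): if $f \colon X \to \Spec \rR$ is a minimal model with associated summands $N_1, \dots, N_n$, then $N \colonequals \bigoplus N_i$ is a basic maximal rigid object in $\ucmrr$. That the map respects mutation follows by iterated application of Proposition \ref{flopsmut}: performing a simple flop at $C_i$ replaces the associated module by $\upnu_i N$, and minimality is preserved under flops (flops are isomorphisms in codimension one between $\mathbb{Q}$-factorial terminal threefolds).

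\emph{Injectivity.} For this I would use the Van den Bergh tilting construction of Section \ref{construction}: from the minimal model $f$ one builds a tilting bundle on $X$ whose endomorphism algebra is isomorphic to $\End_\rR(\rR \oplus N)$. The basic algebra $\End_\rR(\rR \oplus N)$ (and in particular the $\rR$-module $\rR \oplus N$ up to isomorphism) is recovered from $N \in \mrig \ucmrr$ together with $\rR$. Since the tilting equivalence identifies the category of sheaves on $X$ with the module category of this algebra, $X$ (and hence $f$, as $f$ is the Stein factorisation of the structure morphism) is determined up to isomorphism over $\Spec \rR$.

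\emph{Surjectivity.} This is the main obstacle. By \cite[4.10]{HomMMP} or a direct argument, there is at least one minimal model $f_0$ of $\Spec \rR$, giving a basic maximal rigid $N_0 \in \mrig \ucmrr$. For an arbitrary maximal rigid $M \in \mrig \ucmrr$, I would argue that $M$ can be reached from $N_0$ by iterated mutation: by the general 2-CY theory (cf.\ \cite[5.3]{yoshino}) any two basic maximal rigid objects differ by a single summand along each mutation step, and using the rigid-to-presilting bijection of Theorem \ref{ctbij} (together with Remark \ref{maxrigremark}(2)) one reduces the connectivity of $\mrig \ucmrr$ to the connectivity of $\twosilt$ for the appropriate endomorphism algebra, which holds once the finiteness given by Kollár--Mori is combined with the known behaviour of two-term silting mutation. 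Applying Proposition \ref{flopsmut} to the corresponding sequence of simple flops starting from $f_0$ produces a flopping contraction whose associated maximal rigid object is $M$, and this flopping contraction is a minimal model by the preservation argument above.

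\emph{Main obstacle.} The delicate step is surjectivity, specifically the connectivity of the collection of basic maximal rigid objects in $\ucmrr$ under mutation. One does not yet know $\mrig \ucmrr$ is finite a priori, so the argument has to bootstrap off the finiteness of minimal models (\cite{[KM]}) together with injectivity to control the image, and then exclude the existence of a maximal rigid object outside this image. This is exactly the content of \cite[4.10]{HomMMP}, and once settled the bijection and mutation-respecting properties give the stated result.
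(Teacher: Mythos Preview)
The paper does not actually prove Theorem~\ref{HMMPbij}: it is quoted from \cite[4.10]{HomMMP}, and the surrounding text only records the ingredients (the map respects mutation by Proposition~\ref{flopsmut}, injectivity by \cite[4.4]{HomMMP}, surjectivity by \cite[4.10]{HomMMP} using finiteness of minimal models). Your four-part outline---well-definedness, mutation-compatibility, injectivity, surjectivity---matches exactly the structure the paper sketches, and your first three parts are fine.

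The genuine gap is in your surjectivity argument. You correctly identify that one does not know $\mrig\ucmrr$ is finite a priori, but your proposed resolution (``bootstrap off the finiteness of minimal models together with injectivity to control the image, and then exclude the existence of a maximal rigid object outside this image'') is where the real content lies, and you do not supply it. What is needed is the following precise statement: if a subset $S\subseteq\twosilt\Lambda$ contains $\Lambda$, is finite, and is closed under mutation, then $S=\twosilt\Lambda$. This follows from the partial-order structure on $\twosilt\Lambda$ (every element lies between $\Lambda$ and $\Lambda[1]$, and mutation moves strictly in the order), so a finite mutation-closed set containing $\Lambda$ must also contain $\Lambda[1]$ and hence everything in between; see for instance \cite[4.9]{[AIR]} or the $\tau$-tilting finiteness arguments in the literature. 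Once this lemma is in hand, your argument goes through: the image of $\{\text{minimal models}\}$ in $\mrig\ucmrr$ is finite (by K\"ollar--Matsuki and injectivity), closed under mutation (by Proposition~\ref{flopsmut} and the fact that flops preserve minimality), and contains $N_0$; transporting through Remark~\ref{maxrigremark}(2) gives a finite mutation-closed subset of $\twosilt\Lambda$ containing $\Lambda$, hence all of it, hence the image is all of $\mrig\ucmrr$. Your phrase ``known behaviour of two-term silting mutation'' gestures at this, but the lemma is the crux and should be stated explicitly.
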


Since $\Spec \rR$ has only finitely many minimal models, a direct consequence of Theorem \ref{HMMPbij} is that $\ucmrr$ must have only finitely many maximal rigid objects. 
\subsection{New Results}
Combining Proposition \ref{prop} and the remark after Theorem \ref{HMMPbij} shows that $\cmrr$ and its stable category satisfy the conditions in \ref{setup2}. Thus we can apply the results of the previous section to obtain the following, which is our main geometric result.
\begin{thm} \label{geomres}
Let $f \colon X \to \Spec \rR$ be as in setup \ref{comploc} with associated contraction algebra $A_\con \colonequals \uEnd_\rR(N)$. 
\begin{enumerate}
\item $A_\con$ is a silting-discrete algebra.
\item The endomorphism algebra of the tilting complex $T \colonequals \upmu^{\upvarepsilon_{m}}_{i_m}  \dots \upmu^{\upvarepsilon_{1}}_{i_1} A_\con$  is isomorphic to the contraction algebra of $f_{i_m\dots i_1}$.
\item The basic algebras derived equivalent to $A_\con$ are precisely the contraction algebras of flopping contractions $g \colon Y \to \Spec \rR$, obtained by a sequence of iterated flops from $f$. In particular, there are only finitely many such algebras.
\end{enumerate}
\end{thm}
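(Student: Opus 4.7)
The proof is essentially a translation, via the dictionary established in Theorem \ref{HMMPbij} and Proposition \ref{flopsmut}, of the general algebraic results of Section 3 to the geometric setting. My plan is to first verify that the Frobenius category $\cE = \cmrr$ satisfies the hypotheses of Setup \ref{setup2}, and then deduce each of the three parts from the corresponding algebraic result.

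Verifying Setup \ref{setup2} for $\cmrr$ is the starting point: Proposition \ref{prop} gives that $\cmrr$ is Frobenius with stable category $\ucmrr$ which is $k$-linear, Hom-finite, Krull-Schmidt, 2-CY, and whose shift functor satisfies $\Sigma^2 \cong \id$. The remaining assumption, finiteness of basic maximal rigid objects in $\ucmrr$, follows from Theorem \ref{HMMPbij} together with the classical fact \cite{[KM]} that complete local isolated cDV singularities admit only finitely many minimal models. Since $N$ is rigid in $\ucmrr$ by Proposition \ref{rigidcon}(1), the hypotheses of every theorem in Section 3 apply to $M = N$ and $\Lambda = A_\con = \uEnd_\rR(N)$.

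Part (1) is then immediate from Theorem \ref{contractiltingdis}. For part (2), the mutation sequence $T \colonequals \upmu^{\upvarepsilon_m}_{i_m} \dots \upmu^{\upvarepsilon_1}_{i_1} A_\con$ determines a path $\upalpha \colonequals s^{\upvarepsilon_m}_{i_m} \dots s^{\upvarepsilon_1}_{i_1}$ in $X_N$ starting at $N$, and Proposition \ref{mutationpaths}(2) gives $\End_{A_\con}(T) \cong \uEnd_\rR(\upnu_\upalpha N)$. On the geometric side, iterating Proposition \ref{flopsmut} shows that the contraction algebra of $f_{i_m\dots i_1}$ is $\uEnd_\rR(\upnu_{i_m} \dots \upnu_{i_1} N)$. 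The two agree once one knows that each step $\upnu_{i_j}^{\upvarepsilon_j}$ and the simple flop at $C_{i_j}$ match, which follows because $\upnu_i$ and $\upnu_i^{-1}$ are mutually inverse and the simple flop at a curve is involutive up to isomorphism. For part (3), Corollary \ref{mainresult}(2) identifies the basic algebras derived equivalent to $A_\con$ with the finite set $\{\uEnd_\rR(M) \mid M \in \mut(N)\}$; by part (2) (applied to all positive paths from $N$) each such element is the contraction algebra of some flopping contraction obtained from $f$ by iterated flops, and conversely every such iterated flop produces a rigid object in $\mut(N)$ via Proposition \ref{flopsmut}.

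The main obstacle I anticipate is the bookkeeping in part (2) reconciling signed mutations of the tilting complex with positive iterated flops on the geometric side; the cleanest way to handle this is probably to induct on the length of $\upalpha$, using Proposition \ref{flopsmut} at each step to pass from $\upnu^{\upvarepsilon_{j-1}}_{i_{j-1}} \dots \upnu^{\upvarepsilon_1}_{i_1} N$ to $\upnu^{\upvarepsilon_j}_{i_j} \dots \upnu^{\upvarepsilon_1}_{i_1} N$, and observing that any path can be replaced by a positive one reaching the same vertex (so that the list of simple flops along the way can be read off). Apart from this verification, the rest of the argument is a direct combination of results already established.
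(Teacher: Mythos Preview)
Your overall strategy matches the paper's: verify that $\cmrr$ satisfies Setup~\ref{setup2}, then read off parts (1)--(3) from Theorem~\ref{contractiltingdis}, Proposition~\ref{mutationpaths}, and Corollary~\ref{mainresult}. Parts (1) and (3) are handled exactly as in the paper.

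The one substantive difference is in part (2), and here your argument is not quite complete. The claim is that $\End_{A_\con}(T) \cong \uEnd_\rR(\upnu^{\upvarepsilon_m}_{i_m}\dots\upnu^{\upvarepsilon_1}_{i_1}N)$ equals the contraction algebra of $f_{i_m\dots i_1}$, i.e.\ the \emph{same} index sequence with all signs dropped. This needs $\upnu_i N' \cong \upnu_i^{-1} N'$ for every rigid $N'$ arising along the way. The paper dispatches this in one line by citing \cite[2.25]{HomMMP}, which says that in this geometric setting left and right mutation of rigid objects coincide. Your proposed route via ``$\upnu_i$, $\upnu_i^{-1}$ are mutually inverse and the flop is involutive'' can be made to work, but it requires an extra ingredient you do not mention: injectivity of the map \eqref{map1}. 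Indeed, from $\upnu_i\upnu_i N' \leftrightarrow (f')_{ii} \cong f' \leftrightarrow N'$ one only deduces $\upnu_i\upnu_i N' \cong N'$ (hence $\upnu_i N' \cong \upnu_i^{-1} N'$) once you know distinct flopping contractions give distinct rigid objects. Without that, involutivity of the flop alone does not pin down $\upnu_i^{-1}$.

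Your final suggestion of ``replacing any path by a positive one reaching the same vertex'' would only yield \emph{some} iterated flop with the right contraction algebra, not the specific $f_{i_m\dots i_1}$ demanded by the statement; so that workaround does not rescue the argument. Either cite \cite[2.25]{HomMMP} directly, or spell out the involutivity-plus-injectivity argument above.
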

\begin{proof}
 Recall that $\Spec \rR$ is a complete local isolated cDV singularity and thus, by Proposition \ref{prop} and the comments after Theorem \ref{HMMPbij}, $\cmrr$ and its stable category satisfy the conditions in \ref{setup2}.
\begin{enumerate}[leftmargin=0cm,itemindent=.6cm,labelwidth=\itemindent,labelsep=0cm,align=left]
\item As $N$ is rigid in $\ucmrr$ by Proposition \ref{rigidcon}, the algebra
\begin{align*}
A_\con \colonequals \uEnd_\rR(N)
\end{align*}
is silting-discrete by Theorem \ref{contractiltingdis}.
\item By Proposition \ref{mutationpaths}, there is an isomorphism 
\begin{align*}
\End_{A_{\con}}(T) \cong \uEnd_\rR(\upnu^{\upvarepsilon_{m}}_{i_m}  \dots \upnu^{\upvarepsilon_{1}}_{i_1} N).
\end{align*}
However, in this setting right and left mutation are equal by \cite[2.25]{HomMMP} and thus 
\begin{align*}
\End_{A_{\con}}(T) \cong \uEnd_\rR(\upnu_{i_m}  \dots \upnu_{i_1} N),
\end{align*}
which is the contraction algebra of $f_{i_m \dots i_1}$, by repeated use of Proposition \ref{flopsmut}.
\item Combining part $(1)$, Proposition \ref{tiltingconnected} and part $(2)$ shows that the endomorphism algebra of any basic tilting complex for $A_\con$ is isomorphic to the contraction algebra of some flopping contraction obtained from $f$ by a sequence of iterated flops. Applying Theorem \ref{rickards} gives the result.
\qedhere
\end{enumerate}
\end{proof}
In the special case of minimal models, it is well known that any two minimal models are connected by a sequence of simple flops \cite{[K]}. Thus, part $(3)$ of Theorem \ref{geomres} reduces to the following.
\begin{cor} \label{geommaxrigid}
Let $f \colon X \to \Spec \rR$ be a minimal model of a complete local isolated cDV singularity. Writing $A_\con$ for the associated contraction algebra, the basic algebras derived equivalent to $A_\con$ are precisely the contraction algebras of minimal models of $\Spec \rR$.  
\end{cor}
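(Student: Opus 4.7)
The plan is to derive this corollary directly from Theorem \ref{geomres}(3) together with Kawamata's theorem on minimal models. Theorem \ref{geomres}(3) already identifies the basic algebras derived equivalent to $A_\con$ with the contraction algebras of flopping contractions obtained by a sequence of iterated flops from $f$; so the remaining task is to verify the set-theoretic equality
\[
\{\text{flopping contractions obtained by iterated flops from $f$}\} = \{\text{minimal models of $\Spec \rR$}\}.
\]

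\textbf{Forward inclusion.} First I would show that the class of minimal models is closed under simple flops. Since $f$ is a minimal model, Proposition \ref{rigidcon}(2) says that $N$ is maximal rigid in $\ucmrr$. For any summand $N_i$ of $N$, Proposition \ref{flopsmut} identifies the contraction algebra of the simple flop $f_i$ at $C_i$ with $\uEnd_\rR(\upnu_i N)$. Under the bijection of Theorem \ref{HMMPbij}, which respects mutation, $f_i$ corresponds to $\upnu_i N$. But mutation of a maximal rigid object is again maximal rigid, so $\upnu_i N \in \mrig\ucmrr$ and hence $f_i$ is itself a minimal model. Iterating, every flopping contraction obtained from $f$ by a mutation sequence is a minimal model.

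\textbf{Reverse inclusion.} For the other direction, I would invoke Kawamata's theorem \cite{[K]}, which states that any two minimal models of $\Spec \rR$ are connected by a finite sequence of simple flops. Hence every minimal model of $\Spec \rR$ can be written as $f_{i_m\dots i_1}$ for some mutation sequence $(i_1,\dots,i_m)$ starting from $f$, and therefore lies in the set on the left-hand side.

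\textbf{Conclusion.} Combining the two inclusions, the two sets coincide, so applying Theorem \ref{geomres}(3) yields the claim that the basic algebras derived equivalent to $A_\con$ are precisely the contraction algebras of the minimal models of $\Spec \rR$. The only step that requires any content is the forward inclusion, but this is essentially immediate once one recognises that mutation of maximal rigid objects stays inside $\mrig \ucmrr$ and that the bijection in Theorem \ref{HMMPbij} intertwines simple flops with mutation; everything else is bookkeeping on top of the already-proved Theorem \ref{geomres}(3). Thus no serious obstacle is anticipated and the proof should be a short deduction.
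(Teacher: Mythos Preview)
Your proposal is correct and follows essentially the same approach as the paper: the paper simply observes that, since any two minimal models are connected by a sequence of simple flops \cite{[K]}, part (3) of Theorem \ref{geomres} immediately specialises to the corollary. You are slightly more explicit than the paper in spelling out the forward inclusion (that iterated flops of a minimal model remain minimal models), which the paper leaves implicit, but this is routine and your justification via Theorem \ref{HMMPbij} is sound.
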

Recall from Conjecture \ref{conjwemyss}, that it is expected that the derived category of the contraction algebras of $\Spec R$ completely controls the geometry. The following shows that, to some extent, this is true.

\begin{thm} \label{evidence}
Suppose that $f \colon X \to \Spec \rR$ and $g \colon Y \to \Spec \sS$ are minimal models of complete local isolated cDV singularities with associated contraction algebras $A_\con$ and $B_\con$. If $A_\con$ and $B_\con$ are derived equivalent then there is a bijection 
\begin{align*}
\{\text{minimal models of $\Spec \rR$}\} \longleftrightarrow \{\text{minimal models of $\Spec \sS$}\}.
\end{align*}
Further, the bijection preserves both mutation and contraction algebras.
\end{thm}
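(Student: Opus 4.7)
The plan is to use the derived equivalence $F \colon \Db(A_\con) \to \Db(B_\con)$ together with Corollary \ref{geommaxrigid} and Theorem \ref{HMMPbij} to match up minimal models on both sides. The immediate observation is that Corollary \ref{geommaxrigid} applied to each of $A_\con$ and $B_\con$, combined with the derived equivalence hypothesis, gives an equality of the two sets of basic algebras: the contraction algebras of minimal models of $\Spec \rR$ coincide with those of minimal models of $\Spec \sS$. The task is to promote this set-level equality to a bijection on minimal models that respects the mutation structure.

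I would construct $\Psi \colon \{\text{min.\ models of }\Spec \rR\} \to \{\text{min.\ models of }\Spec \sS\}$ by tracking tilting complexes across $F$. Given $f'$, connect $f$ to $f'$ by a sequence of simple flops (\cite{[K]}); by Theorem \ref{HMMPbij} and Proposition \ref{flopsmut} this becomes a path $\upalpha$ in $X_\cmrr$ from $N$ to the maximal rigid object $N_{f'}$ associated to $f'$, and Corollary \ref{mainresult}(1) together with Proposition \ref{mutationpaths} converts $\upalpha$ into a tilting complex $\upmu_\upalpha A_\con$ whose endomorphism algebra is the contraction algebra of $f'$. Applying $F$ and using Lemma \ref{tracking}(3) yields a tilting complex for $B_\con$ with the same endomorphism algebra; tilting-discreteness of $B_\con$ (Theorem \ref{geomres}(1)) and Corollary \ref{mainresult}(1) rewrite it as $\upmu_\upbeta B_\con$ for some path $\upbeta$ in $X_\cmss$ from $M$, whose endpoint $M_{g'}$ defines $\Psi(f') \colonequals g'$ via Theorem \ref{HMMPbij}.

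The main obstacle is well-definedness and bijectivity of $\Psi$, since different flop sequences produce different paths $\upalpha$ and the resulting $\upbeta$ may likewise differ. I would handle this structurally: Lemma \ref{tracking}(3) makes summand-indexed silting mutation a derived invariant, so $F$ induces an isomorphism between the two pictures of the derived equivalence class (in the style of Example \ref{ex2}) built from $X_\cmrr$ and $X_\cmss$ rooted at $N$ and $M$ respectively; this isomorphism on vertices, transported through Theorem \ref{HMMPbij}, is exactly $\Psi$, with the inverse obtained symmetrically using $F^{-1}$. Preservation of contraction algebras is then tautological from the construction, and preservation of iterated flops follows directly from Lemma \ref{tracking}(3) combined with Proposition \ref{flopsmut}: a simple flop of $f'$ at the $i$-th curve realises the mutation $\upnu_i N_{f'}$, which $F$ transports to $\upnu_i M_{\Psi(f')}$, and by Proposition \ref{flopsmut} applied on the $\sS$ side this is precisely the simple flop of $\Psi(f')$ at the corresponding curve.
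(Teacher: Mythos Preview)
Your well-definedness argument has a genuine gap. You correctly flag the issue: two flop sequences from $f$ to the same $f'$ give paths $\upalpha \neq \upalpha'$ and in general $\upmu_\upalpha A_\con \not\cong \upmu_{\upalpha'} A_\con$. Your proposed fix is that Lemma \ref{tracking}(3) makes $F$ induce an isomorphism of the ``pictures'' of Example \ref{ex2}. But Lemma \ref{tracking}(3) only shows that $F$ commutes with silting mutation, hence induces an isomorphism of \emph{tilting graphs}, whose vertices are tilting complexes. The pictures in question have vertices in $\mrig \ucmrr$ and $\mrig \ucmss$, and the passage from a tilting complex $T$ back to a maximal rigid object goes via ``take the endpoint $\upnu_\upalpha N$ of some path with $\upmu_\upalpha A_\con \cong T$''. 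You have not shown this is independent of $\upalpha$, and since distinct maximal rigid objects can have isomorphic stable endomorphism rings (already visible in Figure \ref{graphs}), the endomorphism algebra of $T$ cannot by itself pin down the vertex. Concretely, after transporting through $F$ you need the symbolic word $\upalpha$, reinterpreted as a mutation sequence in $\ucmss$ from some fixed base object, to land at a vertex independent of the choice of $\upalpha$ among those ending at $N_{f'}$ in $\ucmrr$; nothing you cite establishes this.

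The paper sidesteps the problem by a shorter, different route. Instead of pushing arbitrary tilting complexes through $F$, it first invokes Corollary \ref{geommaxrigid} to realise $B_\con \cong \uEnd_\rR(N')$ for a specific $N' \in \mrig \ucmrr$, thereby matching indecomposable projectives (and hence summand labels) on the two sides. Then Remark \ref{maxrigremark}(2), applied to the single algebra $B_\con$ viewed once as $\uEnd_\rR(N')$ and once as $\uEnd_\sS(M)$, gives two explicit mutation-preserving bijections
\[
\mrig \ucmrr \longleftrightarrow \twosilt B_\con \longleftrightarrow \mrig \ucmss,
\]
anchored by $N' \leftrightarrow B_\con \leftrightarrow M$. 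Composing with Theorem \ref{HMMPbij} on each side yields the bijection on minimal models, with mutation and contraction-algebra preservation immediate from Theorem \ref{geomres}(2). The input your argument is missing is precisely this use of the two-term silting bijection of Theorem \ref{ctbij} as a common finite combinatorial model for both $\mrig \ucmrr$ and $\mrig \ucmss$: it furnishes well-defined maps to and from maximal rigid objects without any reference to paths, and so dissolves the well-definedness problem rather than confronting it.
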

\begin{proof}
Let $M \in \ucmrr$ be the maximal rigid object associated to $f$ and let $N \in \ucmss$ be the maximal rigid object associated to $g$ so that
\begin{align*}
A_\con \colonequals \uEnd_\rR(M) \quad \text{and} \quad B_\con \colonequals \uEnd_\sS(N).
\end{align*}
Fix an ordering $C_1, \dots, C_n$ of the curves in the exceptional locus of $f$, which fixes an ordering on the decomposition $M \cong \bigoplus_{i=1}^nM_i$ such that $M_i$ corresponds to curve $C_i$ via the construction in \S\ref{construction}.

Since $B_\con$ is basic and derived equivalent to $A_\con$, Corollary \ref{geommaxrigid} shows that $B_\con$ must be isomorphic to a contraction algebra for some minimal model of $\Spec \rR$. In particular, as any two minimal models are connected by a sequence of flops $B_\con$ must be the contraction algebra of
\begin{align*}
f' \colonequals f_{i_m \dots i_1} \colon X' \to \Spec \rR
\end{align*}
for some mutation sequence $(i_1, \dots, i_m)$ . Thus, writing $M' \colonequals \upnu_{i_m} \dots \upnu_{i_1} M$,
\begin{align*}
B_\con \cong \uEnd_\rR(M')
\end{align*}
by repeated use of Proposition \ref{flopsmut}. In particular, there exists a decomposition $N \cong \bigoplus_{i=1}^n N_i$  such that, for all $i=1, \dots, n$,
\begin{align*}
\uHom_\rR(M',M'_i) \cong \uHom_\sS(N,N_i)
\end{align*} 
as projective $B_\con$-modules. This also fixes a labelling $D_1, \dots, D_n$ of the curves in the exceptional locus of $g$  so that $N_i$ corresponds to $D_i$ via the construction in \S\ref{construction}.

Applying Theorem \ref{ctbij} and the remark afterwards, there are mutation preserving bijections 
\begin{align*}
\twosilt B_\con &\longleftrightarrow \mrig \ucmrr \hspace{-1.5cm}  &\text{and} \quad \twosilt B_\con &\longleftrightarrow \mrig \ucmss .\\
 B_\con &\mapsto M' &\  B_\con &\mapsto N 
\end{align*} 
Further, Theorem \ref{HMMPbij} shows there are bijections
\begin{align*}
\{\text{minimal models of $\Spec \rR$}\} &\longleftrightarrow   \mrig \ucmrr\\
f' &\mapsto M'
\end{align*}
and 
\begin{align*}
\{\text{minimal models of $\Spec \sS$}\}  &\longleftrightarrow   \mrig \ucmss \\
g &\mapsto N
\end{align*}
which both respect mutation. Combining all of these provides a mutation preserving bijection 
\begin{align*}
\{\text{minimal models of $\Spec \rR$}\} &\longleftrightarrow \{\text{minimal models of $\Spec \sS$}\}\\
f' &\mapsto g
\end{align*}
as required. Thus for any mutation sequence $(j_1, \dots, j_l)$, our choice of indexing on the curves ensures $f'_{j_l \dots j_1} \mapsto g_{j_l \dots j_1}$. Since the contraction algebra of both $f'$ and $g$ is $B_\con$, part $(2)$ of Theorem \ref{geomres} shows the contraction algebra of both $f'_{j_l \dots j_1}$ and $g_{j_l \dots j_1}$ is 
\begin{align*}
\End_{B_\con}(\upmu_{j_l}  \dots \upmu_{j_1} B_\con)
\end{align*}
and hence the bijection preserves contraction algebras.
\end{proof}
\subsection{Example} \label{geometricexample}
A large class of examples can be obtained from complete local $cA_{m-1}$ singularities. These can be written in the form
\begin{align*}
\rR \colonequals \mathbb{C}\llbracket u,v,x,y \rrbracket/ (uv-f(x,y))
\end{align*}
where $m$ is the order of the polynomial $f(x,y)$ considered as a power series. We will
only consider isolated singularities, which can be characterised using the irreducible
factors of $f(x, y)$; namely, if $f$ factors into $n$ irreducible power series $f_1, \dots, f_n$, then $\rR$
is isolated precisely when $(f_i) \neq (f_j)$ for all $i \neq j$. For these singularities, the maximal rigid objects in $\ucmrr$ have been completely determined.

\begin{definition}
Suppose $\mathbb{C}\llbracket u,x,x,v \rrbracket/ (uv-f_1 \dots f_{n})$ is a $cA_{m-1}$ singularity. Given $\sigma \in S_n$, where $S_n$ is the symmetric group on $n$ objects, define
\begin{align*}
M_\sigma \colonequals (u, f_{\sigma(1)}) \oplus  (u, f_{\sigma(1)} f_{\sigma(2)}) \oplus \dots \oplus  (u, f_{\sigma(1)} \dots  f_{\sigma(n-1)}). 
\end{align*}
\end{definition}
\begin{thm}[\cite{[IW3]}, 5.1]
For a $cA_{m-1}$ singularity $\rR \cong\mathbb{C}\llbracket u,x,x,v \rrbracket/ (uv-f_1 \dots f_{n})$, the maximal rigid objects in $\ucmrr$ are precisely the objects $\{ M_\sigma \mid \sigma \in S_n\}$. In particular, if $\rR$ is isolated then there are $n!$ maximal rigid objects, and each has $n-1$ summands.
\end{thm}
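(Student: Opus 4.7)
The plan is to deduce the classification by combining the explicit matrix factorization description of maximal Cohen--Macaulay modules over a $cA_{n-1}$ hypersurface singularity with a combinatorial analysis of rigidity.

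First, I would classify the indecomposable objects of $\cmrr$. Since $\rR = \mathbb{C}\llbracket u,v,x,y \rrbracket/(uv - f_1\cdots f_n)$ is a hypersurface, $\cmrr$ is equivalent to the category of matrix factorizations of $uv - f_1 \cdots f_n$. A standard calculation (either directly from the factorization $uv = f_1 \cdots f_n$, or via Kn\"orrer periodicity together with the well-known classification for the one-variable singularity $uv - g$) shows that the indecomposable non-free maximal Cohen--Macaulay $\rR$-modules are precisely the ideals
\[
I_S := (u, f_S), \qquad f_S := \prod_{i \in S} f_i,
\]
as $S$ ranges over the nonempty proper subsets of $\{1, \dots, n\}$ (the empty set and the full set both give $I_S \cong \rR$, which is free).

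Second, I would compute, for each pair $S, T$ of nonempty proper subsets, the space $\Ext^1_\rR(I_S, I_T)$, or equivalently $\uHom_\rR(I_S, \Sigma I_T)$ in $\ucmrr$. The $2$-periodic matrix factorization resolution of each $I_S$ makes this tractable, and the key claim is that
\[
\Ext^1_\rR(I_S, I_T) = 0 \text{ and } \Ext^1_\rR(I_T, I_S) = 0 \iff S \subseteq T \text{ or } T \subseteq S.
\]
Granted this, a direct sum $\bigoplus_k I_{S_k}$ is rigid in $\ucmrr$ exactly when the subsets $S_k$ form a chain in the inclusion poset.

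Third, the classification then becomes purely combinatorial. Maximal rigid objects correspond to maximal chains of nonempty proper subsets of $\{1, \dots, n\}$. Any such maximal chain has exactly $n-1$ members, of sizes $1, 2, \dots, n-1$, and chains of this form are in bijection with permutations $\sigma \in S_n$ via $S_k = \{\sigma(1), \dots, \sigma(k)\}$. The module assigned to the chain coming from $\sigma$ is precisely $M_\sigma$, which therefore has $n-1$ summands. In the isolated case, the $f_i$ are pairwise coprime (up to units) and a short argument shows that distinct subsets $S$ yield non-isomorphic ideals $I_S$, so distinct permutations give non-isomorphic $M_\sigma$, producing exactly $n!$ maximal rigid objects.

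The main obstacle is the $\Ext^1$ computation identifying rigidity with comparability under inclusion; once that is in hand, the rest is a clean combinatorial translation. Carrying out this Ext calculation requires careful bookkeeping with the matrix factorizations of $I_S$ and $I_T$ and an analysis of how morphisms between them factor through free summands, and this is where most of the work of \cite{[IW3]} is concentrated.
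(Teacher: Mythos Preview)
The paper does not prove this theorem; it is quoted from \cite{[IW3]} without argument, so there is no in-paper proof to compare against.

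Your outline has a genuine gap at step (1). It is not true in general that the indecomposable non-free objects of $\cmrr$ are exactly the ideals $I_S$. Kn\"orrer periodicity identifies $\ucmrr$ with $\ucm\bigl(\mathbb{C}\llbracket x,y\rrbracket/(f_1\cdots f_n)\bigr)$, a plane curve with $n$ branches; for $n\ge 4$ (for instance four distinct lines through the origin) this curve is not a simple singularity and hence has infinite Cohen--Macaulay representation type, so there are infinitely many indecomposable CM $\rR$-modules, only $2^n-2$ of which are the rank-one ideals $I_S$. Your appeal to ``the one-variable singularity $uv-g$'' does not apply: Kn\"orrer removes $u,v$ and leaves a genuinely two-variable problem. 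What you actually need is the weaker statement that every \emph{rigid} indecomposable is isomorphic to some $I_S$; granted that, your $\Ext^1$ criterion and chain combinatorics in step (3) are correct. But establishing this weaker statement is itself the heart of the matter and cannot be read off from a global CM classification. The argument in \cite{[IW3]} proceeds differently: it verifies directly that each $R\oplus M_\sigma$ is maximal modifying, computes that mutation at the $i$-th summand sends $M_\sigma$ to $M_{\sigma\cdot(i,\,i+1)}$, and then uses connectedness of maximal modifying modules under mutation to conclude that the $M_\sigma$ exhaust the maximal rigid objects.
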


\begin{center}
\begin{figure}[h!]
 \captionsetup{width=0.9\linewidth}

\begin{tikzpicture}[scale=1.3,bend angle=15, looseness=1,>=stealth]
\node (C1) at (60:1.5cm) [] {$ \scriptstyle M_\id$};
\node (C2) at (120:1.5cm) [] {$ \scriptstyle M_{(23)}$};
\node (C3) at (180:1.5cm) [] {$ \scriptstyle M_{(132)}$};
\node (C4) at (240:1.5cm) [] {$ \scriptstyle M_{(13)}$};
\node (C5) at (300:1.5cm) [] {$\scriptstyle M_{(123)}$};
\node (C6) at (0:1.5cm) [] {$\scriptstyle M_{(12)}$};
\draw[->, bend right]  (C1) to (C6);
\draw[->, bend right]  (C6) to (C1);
\draw[->, bend right]  (C1) to (C2);
\draw[->, bend right]  (C2) to (C1);
\draw[->, bend right]  (C2) to (C3);
\draw[->, bend right]  (C3) to (C2);
\draw[->, bend right]  (C3) to (C4);
\draw[->, bend right]  (C4) to  (C3);
\draw[->, bend right]  (C5) to (C4);
\draw[->, bend right]  (C4) to  (C5);
\draw[<-, bend right]  (C5) to  (C6);
\draw[<-, bend right]  (C6) to  (C5);
\node at (30:1cm) {$\scriptstyle s_1$};
\node at (30:1.6cm) {$\scriptstyle s_1$};
\node at (90:1cm) {$\scriptstyle s_2$};
\node at (90:1.6cm) {$\scriptstyle s_2$};
\node at (150:1cm) {$\scriptstyle s_1$};
\node at (150:1.6cm) {$\scriptstyle s_1$};
\node at (210:1cm) {$\scriptstyle s_2$};
\node at (210:1.6cm) {$\scriptstyle s_2$};
\node at (270:1cm) {$\scriptstyle s_1$};
\node at (270:1.6cm) {$\scriptstyle s_1$};
\node at (330:1cm) {$\scriptstyle s_2$};
\node at (330:1.6cm) {$\scriptstyle s_2$};
\end{tikzpicture}
\hspace{1cm}
\begin{tikzpicture}[scale=1.3,bend angle=15, looseness=1,>=stealth]
\node (C1) at (60:1.5cm) [] {$ \scriptstyle A_\con$};
\node (C2) at (120:1.5cm) [] {$ \scriptstyle B_\con$};
\node (C3) at (180:1.5cm) [] {$ \scriptstyle C_\con$};
\node (C4) at (240:1.5cm) [] {$ \scriptstyle A_\con$};
\node (C5) at (300:1.5cm) [] {$\scriptstyle B_\con$};
\node (C6) at (0:1.5cm) [] {$\scriptstyle C_\con$};
\draw[->, bend right]  (C1) to (C6);
\draw[->, bend right]  (C6) to (C1);
\draw[->, bend right]  (C1) to (C2);
\draw[->, bend right]  (C2) to (C1);
\draw[->, bend right]  (C2) to (C3);
\draw[->, bend right]  (C3) to (C2);
\draw[->, bend right]  (C3) to (C4);
\draw[->, bend right]  (C4) to  (C3);
\draw[->, bend right]  (C5) to (C4);
\draw[->, bend right]  (C4) to  (C5);
\draw[<-, bend right]  (C5) to  (C6);
\draw[<-, bend right]  (C6) to  (C5);
\node at (30:1cm) {$\scriptstyle s_1$};
\node at (30:1.6cm) {$\scriptstyle s_1$};
\node at (90:1cm) {$\scriptstyle s_2$};
\node at (90:1.6cm) {$\scriptstyle s_2$};
\node at (150:1cm) {$\scriptstyle s_1$};
\node at (150:1.6cm) {$\scriptstyle s_1$};
\node at (210:1cm) {$\scriptstyle s_2$};
\node at (210:1.6cm) {$\scriptstyle s_2$};
\node at (270:1cm) {$\scriptstyle s_1$};
\node at (270:1.6cm) {$\scriptstyle s_1$};
\node at (330:1cm) {$\scriptstyle s_2$};
\node at (330:1.6cm) {$\scriptstyle s_2$};
\end{tikzpicture}
\caption{The left hand side shows $\umrig(\ucmrr)$ for an isolated $cA_{m-1}$ singularity given by $uv-f_1f_2f_3$. The right hand side shows the `picture' of the derived equivalence class of the contraction algebras where paths determine tilting complexes. } \label{graphs}
\end{figure}
\end{center}

In particular, if we choose $n=3$ , there are six maximal rigid objects, each with two summands and the mutation graph is shown in Figure \ref{graphs}. Note that in this geometric setting, it is shown in \cite[\S1, \S 7]{[IW4]} that fixing the ordering of the summands of one rigid object in $\ucmrr$ fixes an ordering on all the other rigid objects in $\mut(M)$, independent of the mutation sequences taken. Thus, each $N \in \mut(M)$ will appear precisely once as a vertex of $\umut(M)$.
\begin{figure}[h!] 
\begin{center}
 \captionsetup{width=0.9\linewidth}
\begin{tikzpicture} [bend angle=45,looseness=1]
\node[vertex] (a) at (0,0) {};
\node[vertex] (b) at (1.8,0) {};
\node at (-3.4,0) {$A_\con \colonequals \underline{\mathrm{End}}_R(M_{\id}) \cong \uEnd_R(M_{(13)}) \cong$};
\draw[<-,bend left] (b) to node[below] {$\scriptstyle c$} (a);
\draw[<-,bend left] (a) to node[above] {$\scriptstyle a$} (b);
\node (C5) at (3.4, 0.1) {$\scriptstyle cl=0 $};
\node (C6) at (3.4, -0.3) {$\scriptstyle la=0 $};
\node (C7) at (3.4, 0.5) {$\scriptstyle l^2+acacac=0 $};
\draw [->] (b) edge [in=40,out=-40,loop,looseness=6] node[below,pos=0.3] {$\scriptstyle l$} (b);
\end{tikzpicture}
\vspace{0.2cm}
\begin{tikzpicture} [bend angle=45,looseness=1]
\node[vertex] (a) at (0,0)  {};
\node[vertex] (b) at (1.8,0) {};
\node at (-4,0) {$B_\con \colonequals \underline{\mathrm{End}}_R(M_{(23)})\cong \uEnd_R(M_{(123)}) \cong$};
\draw[<-,bend left] (b) to node[below] {$\scriptstyle c$} (a);
\draw[<-,bend left] (a) to node[above] {$\scriptstyle a$} (b);
\node (C5) at (3.3, 0.1) {$\scriptstyle la=0 $};
\node (C6) at (3.3, -0.3) {$\scriptstyle cl=0 $};
\node (C7) at (3.3, 0.5) {$\scriptstyle l^2=ac $};
\node (C8) at (4.3, 0.1) {$\scriptstyle am=0 $};
\node (C9) at (4.3, -0.3) {$\scriptstyle mc=0 $};
\node (C10) at (4.3, 0.5) {$\scriptstyle m^3=ca $};
\draw [->] (b) edge [in=40,out=-40,loop,looseness=6] node[below,pos=0.3] {$\scriptstyle l$} (b);
\draw [->] (a) edge [in=140,out=-140,loop,looseness=6] node[below,pos=0.3] {$\scriptstyle m$} (a);
\end{tikzpicture}
\vspace{0.2cm}
\begin{tikzpicture} [bend angle=45,looseness=1]
\node[vertex] (a) at (0,0)  {};
\node[vertex] (b) at (1.8,0) {};
\node at (-4,0) {$C_\con \colonequals \underline{\mathrm{End}}_R(M_{(132)})\cong \uEnd_R(M_{(12)}) \cong$};
\draw[<-,bend left] (b) to node[below] {$\scriptstyle c$} (a);
\draw[<-,bend left] (a) to node[above] {$\scriptstyle a$} (b);
\node (C5) at (2.9, 0.1) {$\scriptstyle cm=0 $};
\node (C6) at (2.9, -0.3) {$\scriptstyle ma=0 $};
\node (C7) at (2.9, 0.5) {$\scriptstyle m^3+acac=0 $};
\draw [->] (a) edge [in=140,out=-140,loop,looseness=6] node[below,pos=0.3] {$\scriptstyle m$} (a);

\end{tikzpicture}
\vspace{-0.2cm}
\caption{The quivers and relations of the contraction algebras of the minimal models of the $cA_2$ singularity given by $uv-xy(x^2+y^3)$.} \label{quivers}
\end{center}
\end{figure}

Going back to the $n=3$ example, by Theorem \ref{HMMPbij} there are six minimal models of any such $\Spec \rR$, each with two curves in the exceptional locus. Choosing $f_1=x$, $f_2=y$ and $f_3=x^2+y^3$ the contraction algebras associated to these minimal models are given in Figure \ref{quivers}.

By Corollary \ref{geommaxrigid}, these are the only basic members of a derived equivalence class. Further, as in Example \ref{ex2}, we obtain a picture of this derived equivalence class, shown on the right hand side of Figure \ref{graphs}, which controls the tilting complexes and hence the derived equivalences.

\begin{remark}
More generally, if $A_{\con}$ is the contraction algebra of a minimal model of an isolated $cA_{m-1}$ singularity given by $uv-f_1\dots f_n$, then the quiver of $A_{\con}$ will be the double of the $A_{n-1}$ Dynkin quiver, possibly with up to two loops at each vertex \cite[5.29]{[IW3]}.   
\end{remark}
 
\subsection{Global Setting}

In this final subsection, we remove the restrictions in Setup \ref{comploc} that the base of the flopping contraction needs to be complete local, or even affine.
\begin{setup} \label{global}
Take $f \colon X \to X_\con$ to be a $3$-fold flopping contraction between quasi-projective varieties where $X$ has at worst Gorenstein terminal singularities.
\end{setup}

In this more general setup, Donovan--Wemyss introduce a more general invariant given by a sheaf of algebras \cite{Enhance}. As with the construction of the contraction algebra, the construction involves a vector bundle $\mathcal{V} \colonequals \mathcal{O}_X \oplus \mathcal{V}_0$ on $X$ satisfying
\begin{align*}
f_* \EuScript{E}nd_X(\mathcal{V}) \cong  \cE nd_{X_\con}(f_*\mathcal{V}).
\end{align*}
Although this bundle may not be tilting (as it is in the complete local case) there is a technical condition on $\mathcal{V}$, detailed in \cite[2.3]{Enhance}, which ensures that for any choice of affine open $\Spec R$ in $X_\con$, the bundle $\mathcal{V}|_{f^{-1}(\Spec R)}$ is a tilting bundle.

With this bundle $\mathcal{V}$, they define the \textit{sheaf of contraction algebras} to be
\begin{align*}
\mathcal{D} \colonequals f_* \EuScript{E}nd_X(\mathcal{V})/\mathcal{I}
\end{align*}
where $\mathcal{I}$ is the ideal sheaf of local sections that at each stalk at $v \in X_\con$ factor through a finitely generated projective $\mathcal{O}_{X_{\con},v}$-module (see \cite[2.8]{Enhance} for details).

Writing $Z$ for the locus of points on $X_\con$ above which $f$ is not an isomorphism, \cite[2.16]{Enhance} showed that 
the support of the sheaf $\mathcal{D}$ is precisely $Z$. In particular, in the setup of \ref{global}, the condition on $X$ ensures that $Z= \{p_1, \dots, p_n\}$ where each $p_i$ is an isolated singularity and thus
\begin{align*}
\mathcal{D} \cong \bigoplus_{i=1}^n \mathcal{D}_{p_i}
\end{align*}
where $\mathcal{D}_{p_i}$ is the $\mathcal{O}_{X_{\con},p_i}$-algebra given by the stalk of $\mathcal{D}$ at $p_i$. Specifically, in the setup of \ref{global}, $\mathcal{D}$ is a finite dimensional algebra which splits into a direct sum of algebras, one for each point $p_i$.

Alternatively, for each $p_i$, it is possible to choose an affine neighbourhood $R_i$ of $p_i$ which contains no other $p_j$. Localising if necessary, we can assume $p_i$ is the unique closed point of $R_i$ and setting $U_i \colonequals f^{-1}(\Spec R_i)$, we can consider the map $f_i \colonequals f|_{U_i}$. Further, we can complete this map to obtain a map
\begin{align*}
\widehat{f}_i \colon \widehat{U}_i \to \Spec \widehat{R}_i.
\end{align*} 
This map now satisfies the conditions of the complete local setup in \ref{comploc} and thus we get an associated contraction algebra $A_i \colonequals \uEnd_{\widehat{R}_i}(N_i)$ where $N_i$ is a rigid object in $\underline{\mathrm{CM}}\widehat{R}_i$. 
\begin{thm}\cite[2.24]{Enhance} \label{di}
The completion of $\mathcal{D}_{p_i}$ is Morita equivalent to $A_i$.
\end{thm}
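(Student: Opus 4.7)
The strategy is to reduce to the complete local setting, where $A_i$ is constructed directly, and then to identify the two algebras up to Morita equivalence. First, since $\Spec R_i$ is an affine open of $X_\con$ in which $p_i$ is the unique singular point, the coherent sheaf $\mathcal{D}$ restricted to $\Spec R_i$ corresponds to an $R_i$-module, and $\mathcal{D}_{p_i}$ is obtained by localising at $p_i$. The hypothesis $f_*\EuScript{E}nd_X(\mathcal{V}) \cong \EuScript{E}nd_{X_\con}(f_*\mathcal{V})$ identifies the numerator of this localised stalk with $\End_{R_i}(f_{i*}\mathcal{V}|_{U_i})$, reducing the problem to analysing this endomorphism ring modulo the ideal induced by $\mathcal{I}$.

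Next I would pass to completions. Flat base change along $\Spec \widehat{R}_i \to \Spec R_i$ together with Grothendieck's formal functions theorem for the proper map $f_i \colon U_i \to \Spec R_i$ yields a natural isomorphism between $\widehat{f_{i*}\mathcal{V}|_{U_i}}$ and the pushforward under $\widehat{f}_i$ of the completed bundle on $\widehat{U}_i$. Because the assumption on $\mathcal{V}$ ensures that $\mathcal{V}|_{U_i}$ is tilting, the Van den Bergh construction recalled in \S\ref{construction} applies in the complete local setting and produces a decomposition
\begin{align*}
\widehat{f_{i*}\mathcal{V}|_{U_i}} \cong \widehat{R}_i^{a_0} \oplus \bigoplus_j N_{ij}^{a_j},
\end{align*}
where the $N_{ij}$ are exactly the indecomposable summands of the module $N_i$ defining $A_i$, possibly appearing with multiplicities.

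Finally, I would track the ideal $\mathcal{I}$ through this identification. By its stalk-wise definition, sections of $\mathcal{I}$ at $p_i$ are endomorphisms factoring through finitely generated projective $\mathcal{O}_{X_\con,p_i}$-modules, and this property is preserved under completion, so the completed ideal is the two-sided ideal $[\widehat{R}_i]$ of morphisms factoring through $\add(\widehat{R}_i)$. Hence
\begin{align*}
\widehat{\mathcal{D}_{p_i}} \cong \uEnd_{\widehat{R}_i}\!\Big(\widehat{R}_i^{a_0} \oplus \bigoplus_j N_{ij}^{a_j}\Big),
\end{align*}
and since $\widehat{R}_i$ vanishes in the stable category the free summand drops out, leaving $\uEnd_{\widehat{R}_i}(\bigoplus_j N_{ij}^{a_j})$; this is Morita equivalent to the basic algebra $\uEnd_{\widehat{R}_i}(\bigoplus_j N_{ij}) = A_i$. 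The main obstacle I expect is verifying that the global identification $f_*\EuScript{E}nd_X(\mathcal{V}) \cong \EuScript{E}nd_{X_\con}(f_*\mathcal{V})$ and the stalk-wise definition of $\mathcal{I}$ interact correctly with completion, so that the comparison with the complete local Van den Bergh construction is genuine; once this compatibility is established, the remainder is essentially bookkeeping.
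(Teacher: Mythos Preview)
The paper does not prove this statement: Theorem~\ref{di} is quoted directly from \cite[2.24]{Enhance} and is used as a black box, with no argument given. So there is nothing in the present paper to compare your proposal against.

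That said, your outline is a reasonable reconstruction of how such a result is established. The essential steps---restricting to the affine open, using the compatibility $f_*\EuScript{E}nd_X(\mathcal{V}) \cong \EuScript{E}nd_{X_\con}(f_*\mathcal{V})$, passing to the completion, and tracking the ideal of morphisms factoring through projectives---are the natural ones, and your identification of the completed pushforward with the Van den Bergh bundle (up to multiplicities of summands) is the key point. The obstacle you flag, namely that the stalk-wise definition of $\mathcal{I}$ behaves well under completion, is indeed where the technical work lies; if you want the details you should consult \cite{Enhance} directly rather than this paper.
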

As $\mathcal{D}_{p_i}$ is a finite length module over $\mathcal{O}_{X_{\con},p_i}$, there is an isomorphism $\widehat{\mathcal{D}}_{p_i} \cong \mathcal{D}_{p_i}$ of $\mathcal{O}_{X_{\con},p_i}$-algebras, where $\widehat{\mathcal{D}}_{p_i}$ denotes the completion of $\mathcal{D}_{p_i}$. Combining this with our earlier results gives the following.
\begin{thm}
Under the setup of \ref{global} every algebra derived equivalent to $\mathcal{D}$ is Morita equivalent to an algebra of the form 
\begin{align*}
\bigoplus_{i=1}^n \uEnd_{\widehat{R}_i}(M_i)
\end{align*}
where $M_i \in \mut_{\widehat{R}_i}(N_i)$. In particular, there are only finitely many basic algebras in the derived equivalence class.
\end{thm}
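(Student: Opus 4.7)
The strategy is to reduce to the complete local case by exploiting the block decomposition of $\mathcal{D}$ and then apply Corollary \ref{mainresult} to each block. First I would establish that $\mathcal{D}$ is morita equivalent to the direct product $\bigoplus_{i=1}^n A_i$. Since $\mathcal{D}$ is supported on the isolated points $p_1,\dots,p_n$ and has finite length over $\mathcal{O}_{X_\con}$, it decomposes as a sheaf of algebras as $\mathcal{D}\cong\bigoplus_{i=1}^n\mathcal{D}_{p_i}$. Each $\mathcal{D}_{p_i}$ has finite length over the local ring $\mathcal{O}_{X_\con,p_i}$, hence is isomorphic to its completion, and Theorem \ref{di} then provides a morita equivalence with $A_i$. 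Summing these yields that $\mathcal{D}$ is morita equivalent to $\bigoplus_{i=1}^n A_i$, so the two share an identical derived equivalence class.

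Next I would show that derived equivalences behave well with respect to finite direct products. The central orthogonal idempotents $e_1,\dots,e_n$ of $\bigoplus_i A_i$ induce an orthogonal decomposition
\[
\Kb(\proj(\textstyle\bigoplus_i A_i))\simeq\bigoplus_{i=1}^n\Kb(\proj A_i),
\]
because centrality and orthogonality of the $e_i$ force $\Hom(e_iP,e_jQ)=0$ whenever $i\neq j$. Thus any basic tilting complex $T$ decomposes as $T\cong\bigoplus_i e_iT$, each $e_iT$ is a basic tilting complex for $A_i$, and $\End(T)\cong\bigoplus_{i=1}^n\End_{A_i}(e_iT)$. Combining this with Rickard's theorem shows every basic algebra derived equivalent to $\bigoplus_iA_i$ has the form $\bigoplus_iB_i$ with each $B_i$ basic and derived equivalent to $A_i$.

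Finally I would invoke Corollary \ref{mainresult} separately for each factor. Each $A_i=\uEnd_{\widehat{R}_i}(N_i)$ falls under the complete local setup \ref{comploc}, so its basic derived equivalents are precisely the algebras $\uEnd_{\widehat{R}_i}(M_i)$ for $M_i\in\mut_{\widehat{R}_i}(N_i)$, of which there are only finitely many. Any algebra derived equivalent to $\mathcal{D}$ is morita equivalent to its basic algebra, which by the previous two steps is of the stated form, and the finiteness follows by combining the finiteness at each factor.

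The main obstacle is the second step, the decomposition of tilting complexes along the central idempotents. Although this splitting is standard once the direct-product structure is available, the starting point is only a morita equivalence to $\bigoplus_i A_i$ rather than an isomorphism, so one has to be careful in transferring basic tilting complexes through the morita equivalence and in verifying that the resulting factors really are basic and cannot be regrouped across distinct blocks.
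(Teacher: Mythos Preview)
Your proposal is correct and follows essentially the same route as the paper: decompose $\mathcal{D}$ into its blocks $\mathcal{D}_{p_i}$, identify each block up to Morita equivalence with $A_i=\uEnd_{\widehat{R}_i}(N_i)$ via Theorem~\ref{di} and the finite-length argument, split any derived-equivalent algebra along the blocks, and then apply Corollary~\ref{mainresult} to each factor. The only difference is cosmetic ordering---the paper first splits along the blocks of $\mathcal{D}$ and then passes to $A_i$, whereas you first pass to $\bigoplus_i A_i$ and then split---and you are more explicit than the paper about why tilting complexes decompose across the central idempotents, a point the paper simply asserts.
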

\begin{proof}
Any algebra derived equivalent to $\mathcal{D}$ must be of the form
\begin{align*}
\bigoplus_{i=1}^n B_i
\end{align*}
where $B_i$ is derived equivalent to $\mathcal{D}_{p_i}$. However, by Theorem \ref{di} and the remark after, $\mathcal{D}_{p_i}$ is Morita equivalent to $A_i \colonequals \uEnd_{\widehat{R}_i}(N_i)$ and thus each $B_i$ must be derived equivalent to $A_i$. However, for each $i$, Corollary \ref{mainresult} shows that the only basic algebras in the derived equivalence class of $A_i$ are $\uEnd_{\widehat{R}_i}(M_i)$ for $M_i \in \mut_{\widehat{R}}(N_i)$ and thus $B_i$ must be Morita equivalent to one of these algebras.
\end{proof}


\begin{thebibliography}{BIKR}

\bibitem[AIR]{[AIR]}
T.~Adachi, O.~Iyama, and I.~Reiten, \emph{$\tau$-tilting theory}, Compositio
  Mathematica \textbf{150} (2014), no.~3, 415--452.

\bibitem[A]{tiltingconnectedness}
T.~Aihara, \emph{Tilting-connected symmetric algebras}, Algebras and
  Representation Theory \textbf{16} (2013), no.~3, 873--894.

\bibitem[AI]{siltingmutation}
T.~Aihara and O.~Iyama, \emph{Silting mutation in triangulated categories},
  Journal of the London Mathematical Society \textbf{85} (2012), no.~3,
  633--668.


\bibitem[AM]{AiharaMiz}
T.~Aihara and Y.~Mizuno, \emph{Classifying tilting complexes over preprojective
  algebras of Dynkin type}, Algebra Number Theory \textbf{11} (2017), no.~6,
  1287--1315.

\bibitem[ASS]{ASS}
I.~Assem, D.~Simson and A.~Skowro\'nski, \emph{Elements of the representation theory of associative algebras. Vol. 1.
Techniques of representation theory}, London Mathematical Society Student Texts, 65. Cambridge University Press, Cambridge, 2006.

\bibitem[Au]{hyperplane}
J.~August, \emph{The Tilting Theory of Contraction Algebras}, \textsf{arXiv:1802.10366}.

\bibitem[BW]{GVinv} G.~Brown and M.~Wemyss. \emph{Gopakumar--Vafa invariants do not determine flops.} Comm. Math. Phys. \textbf{361} (2018), no.~1, 143--154.

\bibitem[BZ]{siltingtheorem}
A.B. Buan and Y.~Zhou, \emph{A silting theorem}, Journal of Pure and Applied
  Algebra \textbf{220} (2016), no.~7, 2748--2770.

\bibitem[BIKR]{symmetric}
I.~Burban, O.~Iyama, B.~Keller, and I.~Reiten, \emph{Cluster tilting for
  one-dimensional hypersurface singularities}, Advances in Mathematics
  \textbf{217} (2008), no.~6, 2443--2484.

\bibitem[CZZ]{rigid}
W.~Chang, J.~Zhang, and B.~Zhu, \emph{On support $\tau$-tilting modules over
  endomorphism algebras of rigid objects}, Acta Mathematica Sinica English
  Series \textbf{31} (2015), no.~9.

\bibitem[DW1]{DefFlops}
W.~Donovan and M.~Wemyss, \emph{Noncommutative deformations and flops}, Duke Math. J.
  \textbf{165} (2016), no.~8, 1397--1474.

\bibitem[DW2]{ConDef}
W.~Donovan and M.~Wemyss, \emph{Contractions and deformations},
 Amer. J. Math. \textbf{141} (2019), no.~3, 563--592.

\bibitem[DW3]{twists}
W.~Donovan and M.~Wemyss, \emph{Twists and braids for general threefold flops},
J. Eur. Math. Soc. (JEMS) \textbf{21} (2019), no.~6, 1641--1701.

\bibitem[DW4]{Enhance}
W.~Donovan and M.~Wemyss, \emph{Noncommutative enhancements of contractions},
  Adv. Math. \textbf{344} (2019), 99--136.

\bibitem[D]{Dugas}
A.~Dugas, \emph{A construction of derived equivalent pairs of symmetric
  algebras}, Proceedings of the American Mathematical Society \textbf{143}
  (2015), no.~6, 2281--2300.

\bibitem[GLS]{GLS}
C.~Geiss, B.~Leclerc, and J.~Schr{\"o}er, \emph{Rigid modules over
  preprojective algebras}, Invent. Math. \textbf{165} (2006), no.~3, 589--632.

\bibitem[HT]{invsing}
Z.~Hua and Y.~Toda, \emph{Contraction algebra and invariants of singularities},
  Int. Math. Res. Not. IMRN (2018), no.~10, 3173--3198.

\bibitem[IW1]{mmas}
O.~Iyama and M.~Wemyss, \emph{Maximal modifications and Auslander-Reiten
  duality for non-isolated singularities}, Invent. Math. \textbf{197} (2014),
  no.~3, 521--586.

\bibitem[IW2]{[IW2]} O.~Iyama and M.~Wemyss, \emph{Singular derived categories of $\mathbb{Q}$-factorial terminalizations and maximal modifaction algebras.} Adv. Math. \textbf{261} (2014), 85--121.

\bibitem[IW3]{[IW3]}
O.~Iyama and M.~Wemyss, \emph{Reduction of triangulated categories and maximal modification
  algebras for $cA_{n}$ singularities.}, J. Reine Angew. Math. \textbf{738}
  (2018), 149--202.

\bibitem[IW4]{[IW4]}
O.~Iyama and M.~Wemyss, \emph{Tits cones intersections, contracted preprojective algebras, and affine
actions on 3-fold flops}, in preparation.

\bibitem[IY]{yoshino}
O.~Iyama and Y.~Yoshino, \emph{Mutation in triangulated categories and rigid
  Cohen--Macaulay modules}, Invent. Math. \textbf{172} (2008), no.~1, 117--168.

\bibitem[K]{[K]}
Y.~Kawamata, \emph{Flops connect minimal models}, Publ. Res. Inst. Math. Sci.
  \textbf{44} (2008), no.~2, 419--423.

\bibitem[KM]{[KM]}
Y.~Kawamata and K.~Matsuki, \emph{The number of the minimal models for a 3-fold
  of general type is finite}, Math. Ann. \textbf{267} (1987), no.~4, 595--598.

\bibitem[Ke1]{keller}
B.~Keller, \emph{On the construction of triangle equivalences}, Derived
  equivalences for group rings, Lecture Notes in Math., vol. 1685, Springer,
  Berlin, 1998, pp.~155--176.

\bibitem[Ke2]{kellerhomotopy}
B.~Keller, \emph{Bimodule complexes via strong homotopy actions}, Algebra and
  Representation Theory \textbf{3} (2000), no.~4, 357--376.

\bibitem[KeV]{kellervos}
B.~Keller and D.~Vossieck, \emph{Aisles in derived categories}, Deuxieme Contact Franco-Belge en Algebre (Faulx-les-Tombes,
1987). Bull.\ Soc.\ Math.\ Belg.\ Ser.\ A. \textbf{40} (1988), no. 2, 239--253.

\bibitem[Ko]{kollar}
J.~Koll{\'a}r, \emph{Flops}, Nagoya Math. J. \textbf{113} (1989), 15--36.

\bibitem[Kr]{krause}
H.~Krause, \emph{Krull-{S}chmidt categories and projective covers}, Expo. Math.
  \textbf{33} (2015), no.~4, 535--549. 

\bibitem[Mo]{morrison}
D.~R. Morrison, \emph{The birational geometry of surfaces with rational double
  points}, Math. Ann. \textbf{271} (1985), no.~3, 415--438.

\bibitem[R]{reid}
M.~Reid, \emph{Minimal models of canonical 3-folds}, Algebraic Varieties and
  Analytic Varieties (Tokyo, Japan), Mathematical Society of Japan, 1983,
  pp.~131--180.

\bibitem[RS]{rs}
C.~Reidtmann and A.~Schofield, \emph{On a simplicial complex associated with tilting modules},
Comment. Math. Helv. \textbf{66} (1991), no. 1, 70-78.

\bibitem[Ri1]{morita}
J.~Rickard, \emph{Morita theory for derived categories}, J. London Math. Soc.
  \textbf{39} (1989), no.~2, 436--456.

\bibitem[Ri2]{standard}
J.~Rickard, \emph{Derived equivalences as derived functors}, J. London Math. Soc.
  \textbf{43} (1991), no.~1, 37--48.

\bibitem[RZ]{rouquier}
R.~Rouquier and A.~Zimmermann, \emph{Picard groups for derived module
  categories}, Proc. London Math. Soc. \textbf{87} (2003), no.~3, 197--225.

\bibitem[S]{Schroer}
S.~Schr{\"o}er, \emph{A characterization of semiampleness and contractions of
  relative curves}, Kodai Math. J. \textbf{24} (2001), no.~2, 207--213.

\bibitem[VdB]{VdB}
M.~Van~den Bergh, \emph{Three-dimensional flops and noncommutative rings}, Duke Math. J.
  \textbf{122} (2004), no.~3, 423--455.

\bibitem[W1]{HomMMP}
M.~Wemyss, \emph{Flops and clusters in the homological minimal model programme},
  Invent. Math. \textbf{211} (2018), no.~2, 435--521.


\bibitem[ZZ]{maxrigid}
Y.~Zhou and B.~Zhu, \emph{Maximal rigid subcategories in 2-Calabi-Yau
  triangulated categories}, J. Algebra. \textbf{348} (2011), no.~1, 49--60.

\end{thebibliography}
\end{document}